\definecolor{blue}{rgb}{0,0,0}
\titleformat{\subsubsection}[runin]
{\normalfont\normalsize\bfseries}{\thesubsubsection}{1em}{}
\numberwithin{equation}{section}
\newcommand{\inclu}[0] {\ar@{^{(}->}}
\newcommand{\argmin}{\operatornamewithlimits{argmin}}
\newtheorem{thm}{Theorem}[section]
\newtheorem{theorem}[thm]{Theorem}
\newtheorem{proposition}[thm]{Proposition}
\newtheorem{lemma}[thm]{Lemma}
\newtheorem{corollary}[thm]{Corollary}
\newtheorem{assumption}[thm]{Assumption}
\newtheorem{remark}[thm]{Remark}
\crefname{claim}{claim}{claims}
\Crefname{claim}{Claim}{Claims}
\crefname{lem}{lemma}{lemmas}
\Crefname{lem}{Lemma}{Lemmas}
\crefname{algorithm}{algorithm}{algorithms}
\Crefname{algorithm}{Algorithm}{Algorithms}
\theoremstyle{definition}
\theoremstyle{definition}
\theoremstyle{definition}
\newtheorem*{claim*}{Claim}
\crefname{figure}{Figure}{Figures}
\pgfplotsset{compat=1.18}
\begin{document}

    \title{Some Unified Theory for Variance Reduced Prox-Linear Methods}

    \author{Yue Wu\footnote{Johns Hopkins University, Department of Applied Mathematics and Statistics, \url{ywu166@jhu.edu}} \qquad Benjamin Grimmer\footnote{Johns Hopkins University, Department of Applied Mathematics and Statistics, \url{grimmer@jhu.edu}}}

	\date{}
	\maketitle

	\begin{abstract}
        This work considers the nonconvex, nonsmooth problem of minimizing a composite objective of the form $f(g(x))+h(x)$ where the inner mapping $g$ is a smooth finite summation or expectation amenable to variance reduction. In such settings, prox-linear methods can enjoy variance-reduced speed-ups despite the existence of nonsmoothness. We provide a unified convergence theory applicable to a wide range of common variance-reduced vector and Jacobian constructions. {\color{blue} All the technical conditions we required for variance-reduced methods can be summarized in a single unified assumption.} Our theory (i) only requires operator norm bounds on Jacobians (whereas prior works used potentially much larger Frobenius norms), (ii) provides state-of-the-art high probability guarantees, and (iii) allows inexactness in proximal computations.
	\end{abstract}

    \section{Introduction} \label{Sect:Intro}
In this work, we consider nonsmooth, nonconvex problems
\begin{equation}\label{eq:main}
\min_{x\in \mathbb{R}^n} \Phi(x) := f(g(x))+h(x)
\end{equation}
where $f\colon \mathbb{R}^m \to \mathbb{R}$ and $h\colon \mathbb{R}^n \to \mathbb{R}$ are convex functions and $g\colon \mathbb{R}^n \to \mathbb{R}^m$ is a differentiable mapping. Note that although $f$ is convex and $g$ is smooth, their composition may be neither convex nor smooth. 
This ``convex-composite'' optimization model is surprisingly versatile. As two classic example applications, 
\begin{itemize}
    \item {\bf Nonlinear Programming.} Consider minimizing an objective function $h(x)$ subject to functional constraints $g^{(\ell)}(x)\leq 0$ for $\ell=1\dots m$. Then letting $g(x) = [g^{(1)}(x),\dots,g^{(m)}(x)]$ and $f(z)$ be either an indicator function for the nonpositive orthant or an exact penalty $f(z)=\sum_{i=1}^m C \max\{z_i,0\}$ for sufficiently large $C$, any such nonlinear program can be cast in the form~\eqref{eq:main}. Of particular interest here are settings where each constraint $g^{(\ell)}(x)$ takes the form of a summation $\frac{1}{N}\sum_{j=1}^N g^{(\ell)}_j(x)$ as occurs across machine learning tasks.
    \item {\bf Nonlinear Equation Solving/Regression.} Consider solving a system of equations $0=g(x):=\mathbb{E}_{\xi \sim D}g_\xi(x)$, given only oracles for sampling from $D$ and first-order queries about individual samples $g_\xi(x)$. If one measures solution quality in some norm $f(z) = \|z\|$, minimizing solution error takes the form~\eqref{eq:main}. Any additional regularization can be modeled by $h(x)$, for example, setting $h(x)=\|x\|_1$.
\end{itemize}

We focus on reducing the number of first-order queries needed to elements of the finite summations $g_i$ or expectations $g_\xi$ as occur above. 
Our approach is based on leveraging two well-studied tools in first-order optimization, discussed briefly below: {\it variance reduction} and {\it prox-linear methods}. This combination was recently considered by Zhang and Xiao~\cite{Zhang2021} and Tran-Dinh et al.~\cite{tran-dinh2020a}, motivating our work. 

\noindent {\bf Variance Reduction.}
Throughout, we assume $g\colon \mathbb{R}^n \to \mathbb{R}^m$ is either a finite sum
\begin{equation} \label{eq:finite-sum-g}
    g(x) = \frac{1}{N}\sum_{j=1}^N g_j(x)
\end{equation}
or, more generally, an expectation
\begin{equation} \label{eq:expectation-g}
    g(x) = \mathbb{E}_{\xi\sim D} [g_\xi(x)],
\end{equation}
and that oracles for evaluating components $g_\xi(\cdot)$ and their Jacobians $g_\xi'(\cdot)$ are given. Given samples $\xi \sim D$, these oracle evaluations provide unbiased estimates of $g(\cdot)$ and $g'(\cdot)$. {\color{blue} Directly using these oracles or their averages (minibatching) has been extensively studied~\cite{Ghadimi2013,Ghadimi2016}.} Variance reduction techniques enable the construction of lower variance estimators where a high accuracy (large batch) estimate only needs to be computed every $\tau$ iterations. For the most classic style of update, due to~\cite{Johnson2013,Zhang2021}, every $\tau$ steps would use estimators of the form
\begin{equation}
    \begin{cases}
        \widetilde{g}_0 = \frac{1}{|\mathcal{A}_0|} \sum_{\xi\in \mathcal{A}_0} g_{\xi}(x_0)\\
        \widetilde{g}_i = \frac{1}{|\mathcal{A}_i|} \sum_{\xi\in \mathcal{A}_i} \Bigl( g_{\xi}(x_i) - g_{\xi}(x_0) \Bigr) + \widetilde{g}_0 \qquad \forall i=1,\dots,\tau-1
    \end{cases}\label{eq:basic-VR}
\end{equation}
where the batches $\mathcal{A}_i$ can be much smaller than $\mathcal{A}_0$. When $g$ is given by a finite summation~\eqref{eq:finite-sum-g}, $\widetilde{g}_0$ could be computed exactly. At the cost of additional Jacobian evaluations $g_\xi'(\cdot)$, further refined schemes have been considered~\cite{Zhang2021,Zhou18d}
\begin{equation}
    \widetilde{g}_i = \frac{1}{|\mathcal{A}_i|} \sum_{\xi\in \mathcal{A}_i} \Bigl( g_{\xi}(x_i) - g_{\xi}(x_0) - g'_{\xi}(x_0) (x_i-x_0) \Bigr) + \widetilde{g}_0 + \widetilde{J}_0 (x_i-x_0) 
    \label{eq:smart-VR}
\end{equation}
where $ \widetilde{J}_0 $ is an unbiased estimate of $g'(x_0)$.
Methods specifically targeting root-finding were recently given by~\cite{Davis2022VR} and generalizing to allow relative smoothness by~\cite{Liu2022StochasticCO}. See the survey~\cite{GowerSurvey2020} for more historical context. 
{\color{blue} Variance reduction also underlies more advanced algorithms. For example, the additive setting in~\cite{Reddi2016} can be addressed by variants of SAGA/SVRG estimators of~\cite{Defazio2014,Johnson2013}. Our work shares a similar spirit, but aims at a unified theory for variance reduced methods, focused on composite settings~\eqref{eq:main}.}

\noindent {\bf Prox-linear Methods.} Note a fundamental difficulty in~\eqref{eq:main} is that the composition of a convex function $f$ with a smooth function $g$ may be nonconvex. In contrast, the composition of a convex function with a linear function always remains convex. This motivates replacing $g(\cdot)$ by its linearization $g(x_k) + g'(x_k)(\cdot - x_k)$. Repeatedly minimizing this relaxed convex problem, with an added proximal term, is known as the ``prox-linear method''~\cite{Burke1995,Cartis2011,Drusvyatskiy2016ErrorBQ,Drusvyatskiy2019,Lewis2016,Nesterov2007}
\begin{equation}\label{eq:prox-linear}
    x_{+} = \argmin_{ y\in\mathbb{R}^n} \left\{f\left(g(x) + g'(x)(y -x)\right) + h(y) + \frac{M}{2}\|y-x\|^2_2\right\}
\end{equation}
given some proximal parameter $M>0$. If the above argmin is only computed approximately, perhaps via some first-order method for convex optimization using (sub)gradients of $f$, we refer to this as an ``inexact prox-linear method''. 

This prox-linear step provides a generalized notion of stationarity for composite nonsmooth, nonconvex problems. Denote the generalized gradient at some $x$ by
\begin{equation} \label{eq:GeneralizedGradient}
    \mathcal{G}_M(x) := M(x - x_+) \in \partial \left(f(g(x) + g'(x)(\cdot -x)) + h\right)(x_+)
\end{equation}
where $x_+$ is defined as the exact prox-linear step~\eqref{eq:prox-linear}. The optimality condition defining $x_+$ in~\eqref{eq:prox-linear} ensures
$ \mathcal{G}_M(x) \in \partial \left(f(g(x) + g'(x)(\cdot -x)) + h\right)(x_+) $. 
By the sum and chain rules of subdifferential calculus, there exists $\lambda \in \partial f(g(x) + g'(x)(x_+ -x)) $ and $\zeta\in\partial h(x_+)$ such that $\mathcal{G}_M(x) = g'(x)^\top\lambda + \zeta$. Hence if $\|\mathcal{G}_M(x)\|\leq \epsilon$, then together $\lambda$, $g'(x)$, and $\zeta$ provide a small subgradient certifying stationarity where each of these differential objects is taken at points near $x$. See the survey~\cite{drusvyatskiy2017proximalpointmethodrevisited} for more historical context on prox-linear methods and similar approximate notions of stationarity.

\noindent {\bf Our Contributions.} We analyze variance-reduced, prox-linear methods, iterating
\begin{equation}\label{eq:sketch-of-method}\begin{cases}
    \widetilde g_k, \widetilde J_k & \leftarrow \mathrm{Variance\ Reduced\ Estimates\ of\ } g(x_k),g'(x_k) \\
    x_{k+1}  &\leftarrow \mathrm{Approximate\ Minimizer\ of\ } f\left(\widetilde g_k + \widetilde J_k(x-x_k)\right) + h(x) + \frac{M}{2}\|x-x_k\|^2_2 . 
\end{cases}
\end{equation}
We provide a unified theory for the oracle complexity with respect to evaluations of the vector $g_\xi(\cdot)$ and its Jacobian $g'_\xi(\cdot)$, for a range of variance-reduced approaches to constructing $\widetilde g_k$ and $\widetilde J_k$. Our main theorem (Theorem~\ref{thm:unified-thm-short}) offers three main advances:
\begin{itemize}
    \item {\bf Operator Norm Assumptions.} Our theory only relies on uniform bounds on the variation of Jacobians in operator norm of the form
    \begin{equation}
    \label{eq:op-norm-constants}
    \| g'_\xi(x) - g'_\xi(y)\|_\mathtt{op} \leq L_{g}\|x-y\|_2, \qquad \text{and} \qquad \|g'_\xi(x) - g'(x)\|_\mathtt{op} \leq \sigma_{g'}.
    \end{equation}
    {\color{blue} See Section~\ref{Sect:Prelim} and Assumption~\ref{assumption:variance-like} for formal assumptions on this.} 
    Prior works have instead used the Frobenius norm (see related work discussion below). As a result, the ``constants'' in prior works may be up to a dimension-dependent factor of $\sqrt{\min\{n,m\}}$ times larger than those considered here.
    \item {\bf A Pareto Frontier of State-of-the-Art Guarantees.} Our theory provides guarantees that various prox-linear methods produce a $(\epsilon,\Delta)$-h.p.~stationary point, meaning with probability $1-\Delta$, some $x_k$ has $\|\mathcal{G}_M(x_k)\|_2^2 \leq \epsilon$. Depending on the relative cost of evaluating $g_\xi$ and $g_\xi'$ evaluations in~\eqref{eq:expectation-g} or the relative size of $1/\epsilon$ and $N$ in~\eqref{eq:finite-sum-g}, the best-known method varies. See the many state-of-the-art corollaries in Section~\ref{subsec:vr-corollaries}.
    \item {\bf Accounting of Inexact Proximal Computations.} Our theory allows for inexact prox-linear steps.
    Section~\ref{subsec:inexact-accounting} provides guarantees including the cost of subroutines.
    For example, guarantees follow for doubly stochastic problems where $f$ is also defined as an expectation, requiring inexact minimization. 
\end{itemize}
{\color{blue} As an immediate application, we get the first guarantees for the direct application of the SVRG update~\eqref{eq:basic-VR} to prox-linear methods under the general expectation setting of~\eqref{eq:expectation-g}.}

\noindent {\bf Outline.} The remainder of this section discusses related work. Section~\ref{Sect:Prelim} provides preliminaries and introduces the general algorithm considered. Section~\ref{Sect:MainResults} states our unified convergence theorem and applies it to produce state-of-the-art guarantees for several variance-reduction schemes. Finally, Section~\ref{Sect:Analysis} provides our technical analysis.

\subsection{Related Work}\label{subsec:relatedWork}







The setting~\eqref{eq:main} was recently addressed by two works~\cite{tran-dinh2020a, Zhang2021}. A key insight was their identification that prox-linear methods can benefit from variance reduction despite the existence of nonsmoothness. Although both of these prior works are motivated by bounds on operator norms of Jacobians, their proof techniques relied on uniformly bounding Jacobian matrices in the Frobenius norm\footnote{Both prior works~\cite{tran-dinh2020a,Zhang2021} rely on matrix generalizations of mean-squared error bounding lemmas typical to the analysis of methods with stochastic gradient vectors (see~\cite[Lemma 1]{spider} and~\cite[Lemma 2]{sarah} for the essential vector arguments being generalized). At their core, such lemmas rely on a classic bias-variance decomposition: given a space $\mathcal{E}$ with inner product $\langle\cdot,\cdot\rangle$, a random variable $X_\xi\in \mathcal{E}$ and some fixed $Y\in\mathcal{E}$, one has
$$ \mathbb{E}_\xi\|X_\xi - Y\|^2_{\langle \cdot,\cdot\rangle} = \|\mathbb{E}_\xi[X_\xi] - Y\|^2_{\langle \cdot,\cdot \rangle} + \mathbb{E}_\xi\|X_\xi - \mathbb{E}_{\xi'}[X_{\xi'}]\|^2_{\langle \cdot,\cdot\rangle} $$
where $\|\cdot\|_{\langle \cdot,\cdot\rangle}$ denotes the norm associated with the given inner product.
In the space of matrices, one could apply this reasoning with the trace inner product to relate Frobenius norms. However, such relationships do not hold for norms without an associated inner product (e.g., matrix operator norms), and so prior works, even if not denoted, require the potentially larger Frobenius norm.}.
Our analysis relies only on operator norm bounds, closing this theoretical gap and offering improvements by dimension-dependent factors. 
To make formal comparisons, denote our ``constants'' from \eqref{eq:op-norm-constants} as $(L_{g,\mathtt{op}}, \sigma_{g',\mathtt{op}})$ and their parallels using the Frobenius norm as by $(L_{g,\mathtt{Frob}}, \sigma_{g',\mathtt{Frob}})$. Note $L_{g,\mathtt{op}} \leq L_{g,\mathtt{Frob}}$ and $\sigma_{g',\mathtt{op}} \leq \sigma_{g',\mathtt{Frob}}$.

When $g$ is given by a finite summation~\eqref{eq:finite-sum-g}, Corollaries~\ref{cor:exact-eval-plus-standard}-\ref{cor:exact-eval-plus-smart-bounds} show stationary points can be reached with high probability using at most $O(N+N^{4/5}\frac{L_{g,\mathtt{op}}}{\epsilon})$ evaluations of $g_j$ and $g_j'$, improving prior expectation guarantees of $O(N+N^{4/5}\frac{L_{g,\mathtt{Frob}}}{\epsilon})$.

When $g$ is given by an expectation~\eqref{eq:expectation-g}, prior works assuming stronger Frobenius norm bounds proved $O(\sigma_{g',\mathtt{Frob}}^2/\epsilon^{3/2})$ evaluations of $g'_\xi(x)$ suffice to reach expected stationarity. Corollaries~\ref{cor:sample-mean-plus-standard}-\ref{cor:sample-mean-plus-smart-bounds} of our unified, operator norm-based, variance-reduced theory achieve high probability stationarity guarantees of $O(\sigma_{g',\mathtt{op}}^2/\epsilon^{5/3})$. For example, this yields an improvement whenever $\frac{\sigma_{g', \mathtt{Frob}}}{\sigma_{g', \mathtt{op}}} \ge 1/\epsilon^{1/12}$.


    \section{Preliminaries} \label{Sect:Prelim}
First, we briefly summarize our basic notations. 
Let $O(\cdot)$ and $\Theta(\cdot)$ denote their standard asymptotic notations, both w.r.t $\epsilon\to 0$ and $N\to \infty$. In addition, we use $\widetilde{\Theta}$ instead of $\Theta$ to omit the multiplicative logarithmic terms in $\epsilon$.
For any distribution $D$, we denote its support by $\mathrm{supp}(D)$. Throughout, $\|\cdot\|_2$ is the $2$-norm on Euclidean space and $\|\cdot \|_{\rm{op}}$ is the spectral norm of a matrix. We use several notions of Lipschitz continuity: A vector-valued function $\varphi: \mathbb{R}^n \to \mathbb{R}^m$ is $l$-Lipschitz if $\|\varphi(x) - \varphi(y)\|_2 \le l \|x-y\|_2$ for any $x,y \in \mathbb{R}^n$, a matrix-valued function $\varphi: \mathbb{R}^n \to \mathbb{R}^{m_1 \times m_2}$ is $L$-Lipschitz if $\|\varphi(x) - \varphi(y) \|_{\rm{op}} \le L \|x-y\|_2$ for any $x,y \in \mathbb{R}^n$.
For a convex function $\varphi: \mathbb{R}^n \to \mathbb{R}\cup \{+\infty\}$, a vector $v \in \mathbb{R}^n$ is a subgradient of $\varphi$ at $x_0 \in \mathbb{R}^n$ if $\varphi(x) \ge \varphi(x_0) + \langle v, x-x_0 \rangle$ for all $x \in \mathbb{R}^n$. The subdifferential of $\varphi$ at $x_0$, defined as the set of all subgradients of $\varphi$ at $x_0$, is denoted by $\partial \varphi(x_0)$. For $M\ge 0$, a function $\varphi(x)$ is $M$-strongly convex if $\varphi(x) - \frac{M}{2}\|x\|_2^2$ is convex.


Throughout, we assume the following conditions hold for $f,g,h$ defining~\eqref{eq:main}:
\begin{enumerate}
\item The function $f: \mathbb{R}^m \to \mathbb{R}$ is convex and $l_f$-Lipschitz. 
\item {\color{blue} The function $g: \mathbb{R}^n \to \mathbb{R}^m$ is $l_g$-Lipschitz and its Jacobian $g': \mathbb{R}^n \to \mathbb{R}^{m\times n}$ is $L_g$-Lipschitz in the operator norm.}
\item The function $h: \mathbb{R}^n \to \mathbb{R}\cup \{+\infty\}$ is closed, convex, and proper.
\end{enumerate}
The Lipschitz conditions of $f$, {\color{blue} $g$, and $g'$ give the following fact.}

\begin{proposition}
\label{prop:linear-approx-in-f}
For any $x,y \in \mathbb{R}^n$,
\begin{equation*}
\left| f(g(x)) - f\Bigl(g(y)+g'(y)(x-y) \Bigr) \right| \le \frac{l_f L_g}{2} \|x-y\|_2^2 .
\end{equation*}
\end{proposition}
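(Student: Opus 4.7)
The plan is to reduce the bound on the composition $f\circ g$ to a bound on the linearization error of $g$ alone, exploiting that $f$ is Lipschitz. Since $f$ is $l_f$-Lipschitz, for any $a,b\in\mathbb{R}^m$ we have $|f(a)-f(b)|\le l_f\|a-b\|_2$. Applying this with $a=g(x)$ and $b=g(y)+g'(y)(x-y)$ gives
\begin{equation*}
\bigl|f(g(x)) - f\bigl(g(y)+g'(y)(x-y)\bigr)\bigr| \le l_f\,\bigl\|g(x)-g(y)-g'(y)(x-y)\bigr\|_2,
\end{equation*}
so the entire task reduces to controlling the first-order Taylor remainder of $g$ at $y$ evaluated at $x$.

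Next I would bound this remainder using the fundamental theorem of calculus. Since $g$ is differentiable (being a Lipschitz mapping with the stated Jacobian), we can write
\begin{equation*}
g(x)-g(y) = \int_0^1 g'\bigl(y+t(x-y)\bigr)(x-y)\,dt,
\end{equation*}
and subtracting $g'(y)(x-y) = \int_0^1 g'(y)(x-y)\,dt$ yields
\begin{equation*}
g(x)-g(y)-g'(y)(x-y) = \int_0^1 \bigl[g'\bigl(y+t(x-y)\bigr)-g'(y)\bigr](x-y)\,dt.
\end{equation*}
Taking norms, moving the norm inside the integral, and using submultiplicativity of the operator norm over $\|\cdot\|_2$ gives the pointwise bound $\|[g'(y+t(x-y))-g'(y)](x-y)\|_2 \le \|g'(y+t(x-y))-g'(y)\|_{\mathrm{op}}\|x-y\|_2$.

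Now I would invoke Proposition~\ref{prop:g-and-g'-lip}, which provides $L_g$-Lipschitz continuity of $g'$ in the operator norm. This yields $\|g'(y+t(x-y))-g'(y)\|_{\mathrm{op}} \le L_g\, t\,\|x-y\|_2$, so the integrand is at most $L_g\, t\,\|x-y\|_2^2$, and integrating $t$ over $[0,1]$ produces the factor $\tfrac12$. Thus $\|g(x)-g(y)-g'(y)(x-y)\|_2 \le \tfrac{L_g}{2}\|x-y\|_2^2$, and combining with the first display proves the claim.

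There is no real obstacle here: the only subtlety worth noting is that the Jacobian Lipschitz constant $L_g$ comes from the operator norm bound (as emphasized in the paper's contributions), which is exactly what is needed to pass through the pointwise estimate on $[g'(y+t(x-y))-g'(y)](x-y)$ without incurring any dimension-dependent losses such as a Frobenius norm would introduce.
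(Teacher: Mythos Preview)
Your proof is correct and follows exactly the approach the paper intends: the paper states Proposition~\ref{prop:linear-approx-in-f} without an explicit proof, noting only that it follows from the Lipschitz conditions on $f$ and $g'$, and your argument---apply $l_f$-Lipschitzness of $f$, then bound the Taylor remainder $\|g(x)-g(y)-g'(y)(x-y)\|_2\le \tfrac{L_g}{2}\|x-y\|_2^2$ via the integral representation and the operator-norm Lipschitz bound on $g'$ from Proposition~\ref{prop:g-and-g'-lip}---is precisely that derivation.
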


\subsection{A General Variance Reduced Prox-Linear Method}
Algorithm~\ref{algo:general-framework} presents the general method our unified theory covers.
This method proceeds via two nested loops.
As inputs, we require a total number of outer iterations to be run $K$ and a number of iterations for each inner loop $\tau_0,..., \tau_{K-1}$. A typical variance-reduced method may compute an exact or high-accuracy estimate of $g(x)$ and $g'(x)$ once per outer loop while using cheaper estimates at each inner loop. 

As useful notations, let $\Sigma_\tau = \sum_{k=0}^{K-1} \tau_k$ denote the total number of iterations. For $K\in \mathbb{N}_+$ and $\boldsymbol{\tau} \in \mathbb{N}_+^K$, we define the index set $\mathcal{I}(K, \boldsymbol{\tau}) = \{ (k,i)\in \mathbb{N}^2: 0\le k\le K-1, 0\le i\le \tau_k-1 \}$. So $\mathcal{I}(K, \boldsymbol{\tau} )$ corresponds to all the inner iterations in Algorithm \ref{algo:general-framework}.
Algorithm \ref{algo:general-framework} then proceeds following the general pattern of~\eqref{eq:sketch-of-method} with the $(k,i)$-th iteration consists of an estimation step and an optimization step, using a predefined $\mathtt{estimator}$ and $\mathtt{solver}$ as described below. {\color{blue} Below, we explain the role of $\theta$ for the estimation step and $(\overline{\epsilon}, \overline{\delta})$ as tolerances for the optimization step.}

\LinesNumbered
\SetAlgoLongEnd

\begin{algorithm2e}
\caption{Generalized Variance Reduced, Inexact Prox-Linear Method}
\label{algo:general-framework}
\KwIn{Initialization $x_0^0$, $M>0$, Iteration bounds $K$,$\boldsymbol{\tau}= (\tau_0,..., \tau_{K-1})$,
an estimation method $\mathtt{estimator}(x,i; \theta)$,
a solver $\mathtt{solver}(s,\overline{\epsilon}, \overline{\delta} )$. }
\For{$k=0,..., K-1$}{

\For{$i=0,..., \tau_k-1$}{
Compute $\widetilde{g}_i^k$ and $\widetilde{J}_i^k$ using the predefined method, $(\widetilde{g}_i^k, \widetilde{J}_i^k) \gets \mathtt{estimator}(x_i^k, i; \theta)$. \\
Minimize $s_i^k(x):= f(\widetilde{g}_i^k + \widetilde{J}_i^k (x- x_i^k)) + h(x) + \frac{M}{2} \|x- x_i^k\|_2^2$ by the known solver, and get an inexact solution $x_{i+1}^k \gets \mathtt{solver}(s_i^k,\overline{\epsilon}, \overline{\delta} )$.

}
Set $x_0^{k+1} = x_{\tau_k}^k$.
}
\end{algorithm2e}

\subsubsection{Estimation Step}
At each step $(k,i)$, Algorithm~\ref{algo:general-framework} requires an estimator $\mathtt{estimator}(x,i; \theta)$, treated for now as a black-box, which produces stochastic estimates of $g(x_i^k)$ and $g'(x_i^k)$, denoted $\widetilde{g}_i^k$ and $\widetilde{J}_i^k$. As examples, see the several estimators~\eqref{eq:mini-batch}--\eqref{eq:exact-eval-plus-smart} in Section~\ref{subsec:vr-corollaries}.

As indicated by our notation, the estimator $\mathtt{estimator}(x,i; \theta)$ is allowed to depend on $i$ but not $k$. For example, the most classic variance reduction~\cite{Johnson2013} computes an exact (or high accuracy) estimates of $g(x_0^k)$ and $g'(x_0^k)$ when $i=0$ and then leverage these past estimates to cheaply estimate $g(x_i^k)$ and $g'(x_i^k)$ when $i>0$. This process is repeated at every outer iteration $k$. In particular, the estimators considered here will have a ``memory'' of the most recent $x^k_0$ and potentially the component evaluations $g_\xi$ and $g_\xi'$ previously computed there.
All additional parameters of $\mathtt{estimator}$ are captured by $\theta$, taken from some space $\Theta$. For example, if $\mathtt{estimator}$ is some mini-batch method, then $\theta$ contains the batch sizes used at each iteration.

For our guarantees to apply, we require abstract high probability bounds on the estimation errors $\|\widetilde{g}_i^k- g(x_i^k)\|_2$ and $\|\widetilde{J}_i^k- g'(x_i^k) \|_{\rm{op}}$ that grow at most quadratically and linearly in $\|x_i^k - x_0^k\|_2$. This is natural since as $\|x_i^k - x_0^k\|_2$ grows, any variance reduction scheme leveraging a memory of $x^k_0$ ought to incur larger errors. Any additional constraints on the selection of the parameters $\theta$ are captured by $\mathcal{C}(K, \boldsymbol{\tau}, \Delta)$.
\begin{assumption}[Abstract bounds for estimation errors]
\label{assumption:general-estimation-error-bounds}
For a fixed $\mathtt{estimator}$, there exist five non-negative functions of $(K, \boldsymbol{\tau}, \theta, \Delta)$, denoted as $\gamma_0, \gamma_1, \gamma_2, \lambda_0, \lambda_1$, such that for any $K\in \mathbb{N}_+$, $\boldsymbol{\tau} \in \mathbb{N}_+^K$ and $\Delta\in (0,1)$, there exists a set $\mathcal{C}(K, \boldsymbol{\tau}, \Delta) \subseteq \Theta$ such that for any $\theta \in \mathcal{C}$, with probability at least $1-\Delta$, the following two inequalities simultaneously hold for all $(k,i)\in \mathcal{I}(K, \boldsymbol{\tau} )$: 
\begin{align*}
&\|\widetilde{g}_i^k- g(x_i^k)\|_2 \le \gamma_0(K, \boldsymbol{\tau}, \theta, \Delta) + \gamma_1(K, \boldsymbol{\tau}, \theta, \Delta) \|x_i^k - x_0^k\|_2 + \gamma_2(K, \boldsymbol{\tau}, \theta, \Delta) \|x_i^k - x_0^k\|_2^2 ,\\
&\|\widetilde{J}_i^k- g'(x_i^k) \|_{\rm{op}} \le \lambda_0(K, \boldsymbol{\tau}, \theta, \Delta) + \lambda_1(K, \boldsymbol{\tau}, \theta, \Delta) \|x_i^k - x_0^k\|_2 .
\end{align*}
\end{assumption}
{\color{blue} Note the use of operator norm above. Since $\|\cdot\|_{\rm{op}} \le \|\cdot\|_{\rm{F}}$, this is weaker than assuming upper bounds on $\|\widetilde{J}_i^k- g'(x_i^k) \|_{\rm{F}}$.} 
The functions $\{\gamma_\ell \}_{\ell=0}^2$ and $\{\lambda_\ell \}_{\ell=0}^1$ may also depend on some quantities like $m,n$ and the Lipschitz constants $l_f, l_g, L_g$. Since these are all fixed constants, we omit them and only keep the algorithmic parameters $(K, \boldsymbol{\tau}, \theta, \Delta)$ in the arguments of the functions. 
{\color{blue} Note we require the two estimation error bounds grow quadratically and linearly in $\|x^k_i - x^k_0\|_2$ respectively. One mild shortcoming of this framework is that it does not capture variance reduced estimators like SARAH/SPIDER~\cite{spider,sarah} which inductively set
$$ \widetilde{g}^k_i = \widetilde{g}^k_{i-1} + \frac{1}{|\mathcal{A}_i|}\sum_{\xi\in\mathcal{A}_i} (g_\xi(x^k_i) - g_\xi(x^k_{i-1}))$$
using $\widetilde{g}^k_{i-1}$ rather than $\widetilde{g}^k_{0}$ as a reference value.
For these methods, estimation error can grow faster, scaling with $\sum_{j=1}^i \|x^k_j - x^k_{j-1}\|_2$ rather than the smaller $\|x^k_i - x^k_0\|_2$. 
In Section \ref{subsec:vr-corollaries}, we still}
provide specific examples of $\mathtt{estimator}$ with explicit forms for the set $\mathcal{C}(K, \boldsymbol{\tau}, \Delta)$ and functions $\{\gamma_\ell \}_{\ell=0}^2$, $\{\lambda_\ell \}_{\ell=0}^1$.

\subsubsection{Optimization Step} \label{subsec:solvers}
In the optimization step, we need to (inexactly) solve the subproblem $\min s_i^k(x)$. Formally, we assume access to a known solver, $\mathtt{solver}(s, \overline{\epsilon}, \overline{\delta} )$, that returns an inexact solution $x_\text{sol}$. Algorithm~\ref{algo:general-framework} uses $\mathtt{solver}$ in black-box fashion, only requiring the following assumption:
\begin{assumption}
\label{assumption:subroutine}
For $\overline{\epsilon}, \overline{\delta} >0$ and the problem $\min_x s(x)$, with probability at least $1- \overline{\delta}$, $\mathtt{solver}(s, \overline{\epsilon}, \overline{\delta} )$ returns an $\overline{\epsilon}$-optimal solution $x_\text{sol}$, i.e., $s(x_\text{sol}) \le \inf_x s(x) + \overline{\epsilon}$.
\end{assumption}
{\color{blue} Note this assumption holds for any stochastic method with an expectation guarantee, as Markov's inequality then ensures a high-probability guarantee at the cost of a $1/\overline{ \delta}$ factor. Stronger high-probability results were shown in earlier works like~\cite{Hazan2014,Rakhlin2012}, and by the refined analysis of~\cite{harvey19a}. More recently, the procedure given by~\cite{Davis2021} converts a wide class of stochastic algorithms into high-probability guaranteed methods.}
{\color{blue} We also consider four example subroutines as possible instantiations of $\mathtt{solver}$,} and provide bounds on the resulting total oracle complexities with respect to $f$ in Section~\ref{subsec:inexact-accounting}.


\section{Main Results} \label{Sect:MainResults}
Given any estimator and solver satisfying Assumptions~\ref{assumption:general-estimation-error-bounds} and~\ref{assumption:subroutine}, our main result provides a general set of conditions for algorithmic parameters which guarantees the production of an $\epsilon$-stationary point with high probability.
\begin{theorem}
\label{thm:unified-thm-short}
Suppose Assumption \ref{assumption:general-estimation-error-bounds} holds for $\mathtt{estimator}$, and Assumption \ref{assumption:subroutine} holds for $\mathtt{solver}$. Assume $\Phi^* := \inf_x \Phi(x) > -\infty$. Fix an $M>5 l_f L_g$. For any $\Delta \in (0,1)$ and $\epsilon >0$, with probability at least $1- \Delta$, Algorithm \ref{algo:general-framework}'s iterates satisfy:
\begin{equation*}
\frac{1}{\Sigma_\tau} \sum_{k=0}^{K-1} \sum_{i=0}^{\tau_k -1} \| \mathcal{G}_M(x_i^k) \|_2 ^2 \le \epsilon ,
\end{equation*}
provided the parameters $K, \boldsymbol{\tau}, \theta, \overline{\epsilon}, \overline{\delta}$ satisfy\footnote{
The $\tau_{\text{max}}$ here denotes $\max\{ \tau_0,..., \tau_{K-1}\}$.
}
\begin{align}
\label{eq: thm-unified-thm-short,eq-1} \theta &\in \mathcal{C}(K, \boldsymbol{\tau}, \Delta /2 ) ,\\
\label{eq: thm-unified-thm-short,eq-2} \overline{\delta} &\le \Delta/(2 \Sigma_\tau ) ,\\
\label{eq: thm-unified-thm-short,eq-3} \overline{\epsilon} &\le \epsilon/(5\cdot 30M) ,\\
\label{eq: thm-unified-thm-short,eq-4} \Sigma_\tau &\ge 5\cdot 30M (\Phi(x_0^0) - \Phi^*) / \epsilon ,\\
\label{eq: thm-unified-thm-short,eq-5} \gamma_0(K, \boldsymbol{\tau}, \theta, \Delta/2 ) &\le \epsilon/(5\cdot 125 l_f M) ,\\
\label{eq: thm-unified-thm-short,eq-6} \lambda_0^2(K, \boldsymbol{\tau}, \theta, \Delta/2 ) &\le L_g \epsilon/(5\cdot 95 l_f M) ,\\
\label{eq: thm-unified-thm-short,eq-7} \color{blue} \tau_{\text{max}}^2 \gamma_1^2(K, \boldsymbol{\tau}, \theta, \Delta/2 ) &\le \color{blue} L_g \epsilon/(5\cdot 135 l_f M) , \\
\label{eq: thm-unified-thm-short,eq-8} \color{blue} \tau_{\text{max}}^2 \gamma_2(K, \boldsymbol{\tau}, \theta, \Delta/2 ) &\le \color{blue} 6 L_g /25 ,\\
\label{eq: thm-unified-thm-short,eq-9} \color{blue} \tau_{\text{max}}^2 \lambda_1^2(K, \boldsymbol{\tau}, \theta, \Delta/2 ) &\le \color{blue} 6 L_g^2 /19 .
\end{align}
\end{theorem}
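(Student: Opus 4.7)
The plan is to derive a per-iteration near-descent inequality for the Lyapunov value $\Phi$, telescope it across all $\Sigma_\tau$ inner iterations, and convert the resulting total drop into the stated average bound on $\|\mathcal{G}_M(x_i^k)\|_2^2$. Three sources of slack must be quantified: (i) the quadratic linearization error $\tfrac{l_f L_g}{2}\|\cdot\|^2$ from Proposition~\ref{prop:linear-approx-in-f}; (ii) the estimator errors $\|\widetilde g_i^k - g(x_i^k)\|_2$ and $\|\widetilde J_i^k - g'(x_i^k)\|_{\rm op}$ controlled via Assumption~\ref{assumption:general-estimation-error-bounds}; and (iii) the inexact solver slack $\overline{\epsilon}$ from Assumption~\ref{assumption:subroutine}. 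The requirement $M > 5 l_f L_g$ is what ensures the idealized prox-linear step already delivers a genuine $\Theta(1/M)\|\mathcal{G}_M\|_2^2$ decrease, with headroom to absorb the other two slack sources.

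For the per-iteration bound, I would introduce the exact prox-linear step $y_i^k$ from $x_i^k$ so that $\mathcal{G}_M(x_i^k) = M(x_i^k - y_i^k)$. Combining $M$-strong convexity of the exact surrogate with Proposition~\ref{prop:linear-approx-in-f} gives $\Phi(y_i^k) - \Phi(x_i^k) \le -\tfrac{2M - l_f L_g}{2 M^2}\|\mathcal{G}_M(x_i^k)\|_2^2$. Next, compare the stochastic surrogate $s_i^k$ with its exact counterpart: $l_f$-Lipschitzness of $f$ bounds their pointwise difference by $l_f\bigl(\|\widetilde g_i^k - g(x_i^k)\|_2 + \|\widetilde J_i^k - g'(x_i^k)\|_{\rm op}\|\cdot - x_i^k\|_2\bigr)$, so $M$-strong convexity of $s_i^k$ together with $\overline{\epsilon}$-optimality of $x_{i+1}^k$ bound $\Phi(x_{i+1}^k) - \Phi(y_i^k)$ in terms of these estimator discrepancies, $\overline\epsilon$, and one more $\tfrac{l_f L_g}{2}\|x_{i+1}^k - x_i^k\|_2^2$ application of Proposition~\ref{prop:linear-approx-in-f}. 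Assembling gives a schematic inequality $\Phi(x_{i+1}^k) \le \Phi(x_i^k) - \tfrac{c_1}{M}\|\mathcal{G}_M(x_i^k)\|_2^2 + \tfrac{c_2}{M}\bigl(\|\widetilde g_i^k - g(x_i^k)\|_2^2 + \|\widetilde J_i^k - g'(x_i^k)\|_{\rm op}^2\|y_i^k - x_i^k\|_2^2\bigr) + c_3 \overline\epsilon$.

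Summing over $(k,i)\in \mathcal{I}(K,\boldsymbol{\tau})$ and telescoping $\Phi$ gives $\tfrac{c_1}{M}\sum \|\mathcal{G}_M(x_i^k)\|_2^2 \le \Phi(x_0^0) - \Phi^* + \text{(aggregate error)}$. The main obstacle is absorbing that aggregate error back into a fraction of the left-hand side: the estimator errors grow linearly and quadratically in the drift $\|x_i^k - x_0^k\|_2$ via Assumption~\ref{assumption:general-estimation-error-bounds}, while the descent only produces $\|x_{i+1}^k - x_i^k\|_2^2$-style terms, which in turn relate to $\|\mathcal{G}_M(x_i^k)\|_2^2/M^2$ up to extra error contributions. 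The bridge is a $\tau_k$-weighted Cauchy--Schwarz drift bound $\|x_i^k - x_0^k\|_2^2 \le \tau_k \sum_{j=0}^{\tau_k - 1}\|x_{j+1}^k - x_j^k\|_2^2$, after which every $\gamma_1^2$-, $\gamma_2$-, and $\lambda_1^2$-weighted drift contribution turns into a $(1+\tau_{\max})^2$-weighted sum of $\|\mathcal{G}_M\|_2^2/M^2$ terms. Conditions~\eqref{eq: thm-unified-thm-short,eq-7}--\eqref{eq: thm-unified-thm-short,eq-9} are precisely calibrated so those drift sums collapse into, say, $\tfrac{4}{5}$ of the LHS progress, while~\eqref{eq: thm-unified-thm-short,eq-3}, \eqref{eq: thm-unified-thm-short,eq-5}--\eqref{eq: thm-unified-thm-short,eq-6} force the constant-error pieces $\gamma_0$, $\lambda_0^2$, and $\overline\epsilon$ to sum to at most a small multiple of $\epsilon \Sigma_\tau / M$. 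Dividing by $\Sigma_\tau$ and invoking~\eqref{eq: thm-unified-thm-short,eq-4} yields the average $\|\mathcal{G}_M(x_i^k)\|_2^2 \le \epsilon$. For the probability accounting, Assumption~\ref{assumption:general-estimation-error-bounds} with the choice~\eqref{eq: thm-unified-thm-short,eq-1} controls all estimator errors simultaneously with probability $1-\Delta/2$; conditional on that event, a union bound over the $\Sigma_\tau$ solver calls at failure level $\overline\delta \le \Delta/(2\Sigma_\tau)$ from~\eqref{eq: thm-unified-thm-short,eq-2} contributes the other $\Delta/2$, closing the proof.
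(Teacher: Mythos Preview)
Your overall architecture matches the paper's: establish a one-step inequality combining the descent of $\Phi$, the solver slack $\overline\epsilon$, and the estimator errors, then telescope and absorb the drift $\|x_i^k-x_0^k\|_2^2$ at the cost of a $(1+\tau_{\max})^2$ factor, with the probability budget split as $\Delta/2+\Delta/2$ exactly as you describe. Two points are worth flagging.

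First, your schematic one-step bound has $\|\widetilde g_i^k - g(x_i^k)\|_2^2$ \emph{squared}, but later you (correctly) speak of a ``$\gamma_2$-weighted'' drift contribution. These are inconsistent: squaring the $g$-error produces a $\gamma_2^2\|x_i^k-x_0^k\|_2^4$ term that your Cauchy--Schwarz drift bound cannot absorb into $\sum_j\|x_{j+1}^k-x_j^k\|_2^2$. The paper keeps the $g$-error \emph{linear} (Lemma~\ref{lemma:one-step-analysis-with-error} has $50\,l_f\|\widetilde g_i^k-g(x_i^k)\|_2$, not its square) and then applies Young's inequality $\gamma_1\|x_i^k-x_0^k\|\le \tfrac{\alpha\gamma_1}{2}+\tfrac{\gamma_1}{2\alpha}\|x_i^k-x_0^k\|^2$ with a tunable $\alpha$ to push everything to quadratic drift (Lemma~\ref{lemma:general-lemma-one-step}). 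Your $\gamma_2$ bookkeeping shows you intend the same thing; just make the schematic consistent.

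Second, the drift-absorption mechanism differs slightly. You use the global bound $\|x_i^k-x_0^k\|_2^2\le \tau_k\sum_{j<\tau_k}\|x_{j+1}^k-x_j^k\|_2^2$ and then say the result collapses into the $\|\mathcal G_M\|^2$ side. The paper instead keeps an explicit negative term $-12(\tfrac{M}{2}-l_fL_g)\|x_{i+1}^k-x_i^k\|_2^2$ in the one-step bound and telescopes the drift against \emph{that} term via a backward-recursive weight sequence $c_{(k,i)}=(1+\tfrac{1}{\tau_k})c_{(k,i+1)}+d_k$ with $c_{(k,\tau_k)}=0$ (Theorem~\ref{thm:unified-thm}). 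The point is that the increments $\|x_{j+1}^k-x_j^k\|$ are those of the \emph{inexact stochastic} step, not $\|\mathcal G_M(x_j^k)\|/M$, so they are most cleanly cancelled by the matching negative increment term rather than routed through $\|\mathcal G_M\|^2$. Your Cauchy--Schwarz route can be made to work for the same reason, but be careful to absorb into $-c\sum\|x_{j+1}^k-x_j^k\|^2$, not into the $\|\mathcal G_M\|^2$ sum.
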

{\color{blue} The conditions \eqref{eq: thm-unified-thm-short,eq-1}--\eqref{eq: thm-unified-thm-short,eq-9} can be viewed as constraints on algorithmic parameters. By increasing batch sizes and/or epoch lengths (hence tightening $\gamma_\ell,\lambda_\ell$) and choosing $\overline\epsilon,\overline\delta$ small enough, the conditions \eqref{eq: thm-unified-thm-short,eq-2}--\eqref{eq: thm-unified-thm-short,eq-9} can be met for any given $\epsilon$.}

\subsection{Convergence Rate Corollaries for a Range of VR Schemes} \label{subsec:vr-corollaries}
Next we apply this result to several estimation schemes, providing optimized algorithmic parameters (e.g., batch sizes, loop durations $\tau_k$). These applications all amount to simple applications of concentration inequalities to establish a lemma ensuring Assumption~\ref{assumption:general-estimation-error-bounds} and then calculations based on Theorem~\ref{thm:unified-thm-short} to provide optimized parameter selections and guarantees. 
{\color{blue} While the resulted corollaries provide some interesting insights, the way we verify Assumption~\ref{assumption:general-estimation-error-bounds} is conceptually standard as appeared in prior works~\cite{tran-dinh2020a,Zhang2021}, except we upper bound $\|\widetilde{J}_i^k - g'(x_i^k)\|_{\rm{op}}$ differently because of the operator norm.}
Such sample calculations deriving Corollaries~\ref{cor:sample-mean-plus-standard}-\ref{cor:sample-mean-plus-standard-bounds} are given in Section~\ref{sec:sample-derivations}. As all remaining derivations of corollaries are effectively identical, they are omitted. An interested reader can find them online at~\cite{wu2024unifiedtheory}.

\noindent {\bf The Mini-Batch Method.} We first discuss a simple mini-batch method as a warm-up example. 
At the $(k,i)$-th iteration, we generate an index set $\mathcal{A}_i^k$ of size $A$ and another index set $\mathcal{B}_i^k$ of size $B$, both by sampling from distribution $D$. Then we construct $\widetilde{g}_i^k$ and $\widetilde{J}_i^k$ using the sample mean over the index sets, parameterized by $\theta = (A,B) \in \mathbb{N}_+^2$. We can explicitly express this estimator for use in Algorithm~\ref{algo:general-framework} as
\begin{equation}
\tag{Est\textsubscript{0}}
\label{eq:mini-batch}
\mathtt{estimator}_0 (x_i^k, i; \theta) :
\begin{cases}
\widetilde{g}_i^k = \frac{1}{A} \sum_{\xi\in \mathcal{A}_i^k} g_{\xi}(x_i^k) \\
\widetilde{J}_i^k = \frac{1}{B} \sum_{\xi\in \mathcal{B}_i^k} g'_{\xi}(x_i^k) .
\end{cases}
\end{equation}

The construction above is for the expectation setting in (\ref{eq:expectation-g}). For the special finite average case in (\ref{eq:finite-sum-g}), it reduces to sampling with replacement from $\{1,...,N\}$, then $\widetilde{g}_i^k = \frac{1}{A} \sum_{j\in \mathcal{A}_i^k} g_{j}(x_i^k)$ and $\widetilde{J}_i^k = \frac{1}{B} \sum_{j\in \mathcal{B}_i^k} g'_{j}(x_i^k)$. 
To control the estimation error of $\widetilde{g}_i^k$ and $\widetilde{J}_i^k$, we need the following assumption.
\begin{assumption}
\label{assumption:variance-like}
There exist constants $\sigma_g$ and $\sigma_{g'}$ such that for any $\xi \in \mathrm{supp}(D)$ and any $x\in \mathbb{R}^n$,
$\|g_{\xi}(x) - g(x)\|_2 \le \sigma_g$ and $\|g'_{\xi}(x) - g'(x) \|_{\rm{op}} \le \sigma_{g'}$.
\end{assumption}

Such uniform bounds suffice to ensure Assumption \ref{assumption:general-estimation-error-bounds} holds for the above mini-batching estimator. The following lemma provides explicit values for the associated set $\mathcal{C}(K, \boldsymbol{\tau}, \Delta)$, and functions $\{\gamma_\ell \}_{\ell=0}^2$, $\{\lambda_\ell \}_{\ell=0}^1$. This lemma follows as a consequence of standard concentration inequalities.

\begin{lemma}
\label{lemma:mini-batch}
Suppose Assumption \ref{assumption:variance-like} holds. If $\mathtt{estimator}(x_i^k, i; \theta)$ is defined by (\ref{eq:mini-batch}), where $\theta= (A,B)$ and $\Theta= \mathbb{N}_+^2$, then Assumption \ref{assumption:general-estimation-error-bounds} holds with the following choices of $\mathcal{C}(K, \boldsymbol{\tau}, \Delta)$, $\{\gamma_\ell \}_{\ell=0}^2$ and $\{\lambda_\ell \}_{\ell=0}^1$:
\begin{align*}
    &\mathcal{C}(K, \boldsymbol{\tau}, \Delta) = \left\{ (A,B) \in \mathbb{N}_+^2 : A \ge \frac{4}{9} \log \left( \frac{2(m+ 1) \Sigma_\tau}{\Delta} \right), \text{and } B \ge \frac{4}{9} \log \left( \frac{2(m+ n) \Sigma_\tau}{\Delta} \right) \right\} ,\\
    &\gamma_0(K, \boldsymbol{\tau}, \theta, \Delta ) = \frac{2 \sigma_g}{\sqrt{A}} \sqrt{\log \left( \frac{2(m+ 1) \Sigma_\tau}{\Delta} \right)}, \quad \lambda_0(K, \boldsymbol{\tau}, \theta, \Delta ) = \frac{2 \sigma_{g'}}{\sqrt{B}} \sqrt{\log \left( \frac{2(m+ n) \Sigma_\tau}{\Delta} \right)},\\
    &\gamma_1= \gamma_2= \lambda_1=0 .
\end{align*}
\end{lemma}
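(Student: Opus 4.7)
The plan is to apply matrix Bernstein concentration separately to each of the two sample means at each inner iteration, then close via a union bound over all inner iterations and both estimators. A pleasant simplification here is that the mini-batch estimator~(\ref{eq:mini-batch}) queries only the current iterate $x_i^k$ with fresh, independent samples, so it is conditionally unbiased and its error carries no dependence on $x_0^k$. This makes $\gamma_1 = \gamma_2 = \lambda_1 = 0$ immediately, and reduces the task to producing the constants $\gamma_0$ and $\lambda_0$ via concentration.

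For the vector estimator, I condition on $x_i^k$ and decompose $\widetilde{g}_i^k - g(x_i^k) = \frac{1}{A}\sum_{\xi \in \mathcal{A}_i^k}(g_\xi(x_i^k) - g(x_i^k))$. The summands are i.i.d.\ zero-mean and bounded by $\sigma_g$ in Euclidean norm by Assumption~\ref{assumption:variance-like}, and their conditional second-moment operator is bounded by $\sigma_g^2$. Applying matrix Bernstein via the self-adjoint dilation (ambient dimension $m+1$) yields a tail of the form $(m+1)\exp\bigl(-\tfrac{At^2/2}{\sigma_g^2 + \sigma_g t/3}\bigr)$. The condition $A \ge \tfrac{4}{9}\log(2(m+1)\Sigma_\tau/\Delta)$ is exactly what is needed to guarantee that, at the target radius $t = \tfrac{2\sigma_g}{\sqrt{A}}\sqrt{\log(2(m+1)\Sigma_\tau/\Delta)}$, the linear $\sigma_g t/3$ term in the Bernstein denominator is dominated by $\sigma_g^2$: this reduces the tail to a sub-Gaussian bound with constant $1/4$ in the exponent. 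Equating this tail with $\Delta/(2\Sigma_\tau)$ and inverting yields the claimed $\gamma_0$. An identical argument applied to $\widetilde{J}_i^k$, with $m \times n$ matrix-valued summands $g'_\xi(x_i^k) - g'(x_i^k)$ bounded in operator norm by $\sigma_{g'}$, replaces the dimension factor $m+1$ by $m+n$ and produces both the stated $\lambda_0$ and the lower bound on $B$ in $\mathcal{C}(K,\boldsymbol{\tau},\Delta)$.

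The final step is a union bound over the $2\Sigma_\tau$ candidate bad events (two per inner iteration), which caps total failure probability at $\Delta$. The only mildly subtle point is that $x_i^k$ is itself random, depending on the full past history of samples, so the Bernstein bound must be applied conditionally on the $\sigma$-algebra generated by the earlier iterations. Because $\mathcal{A}_i^k$ and $\mathcal{B}_i^k$ are drawn fresh and independently of this history, and because the Bernstein parameters $\sigma_g, \sigma_{g'}$ are uniform in $x$ by Assumption~\ref{assumption:variance-like}, the conditional tail bound transfers uniformly via the tower property to an unconditional statement, and the union bound then closes the argument. No step is really an obstacle; the mild bookkeeping consists of tracking the $1/2$ factors in the per-event failure probabilities (one factor for splitting between the two estimator types, one for the union bound over iterations), which together produce the $2(m+1)\Sigma_\tau/\Delta$ and $2(m+n)\Sigma_\tau/\Delta$ arguments in the logarithms.
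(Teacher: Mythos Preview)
Your proposal is correct and follows essentially the same approach as the paper: the paper packages the matrix Bernstein step you describe into its Lemma~\ref{lemma:Matrix-Bernstein-corollary} (where the $\tfrac{4}{9}$ condition is derived exactly as you explain, to dominate the linear term in the Bernstein denominator), applies it conditionally on the iterate history via Proposition~\ref{prop:error-bound-i=0-sample-mean} (including the tower-property argument you mention), and then closes with the same union bound over $2\Sigma_\tau$ events with per-event failure $\delta = \Delta/(2\Sigma_\tau)$.
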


Substituting the results of Lemma \ref{lemma:mini-batch} into conditions (\ref{eq: thm-unified-thm-short,eq-1})--(\ref{eq: thm-unified-thm-short,eq-9}), Theorem \ref{thm:unified-thm-short} provides constraints on the parameters $K, \boldsymbol{\tau}, \theta$ which guarantee minibatching produces a stationary point with high probability. Furthermore, since $(K, \boldsymbol{\tau})$ controls the number of iterations in Algorithm \ref{algo:general-framework} and $\theta= (A,B)$ determines the batch sizes at each evaluation, one can optimize their selection over this feasible region. Directly doing so, the following corollary provides such optimized choices. 

\begin{corollary}
\label{cor:mini-batch}
Consider any $\Delta \in (0,1)$, $M>5 l_f L_g$, and any sufficiently small $\epsilon >0$. Suppose Assumption \ref{assumption:subroutine} holds for $\mathtt{solver}$, Assumption \ref{assumption:variance-like} holds for $g$, $\inf_x \Phi(x) > -\infty$, and $\mathtt{estimator}$ is defined by (\ref{eq:mini-batch}). Set $\Sigma_\tau = \lceil C_{\Sigma} \cdot \epsilon^{-1} \rceil$, $A = \lceil C_A \cdot \epsilon^{-2} \cdot \log ( \frac{4(m+ 1) \Sigma_\tau}{\Delta} ) \rceil$, $B = \lceil C_B \cdot \epsilon^{-1} \cdot \log ( \frac{4(m+ n) \Sigma_\tau}{\Delta} ) \rceil$, $\overline{\delta} \le \Delta/(2 \Sigma_\tau )$, $\overline{\epsilon} \le \epsilon/(5\cdot 30M)$, where $C_{\Sigma}, C_A, C_B$ are some constants, 
then with probability at least $1-\Delta$: (i) Algorithm $\ref{algo:general-framework}$'s iterates satisfy:
\begin{equation*}
\frac{1}{\Sigma_\tau} \sum_{k=0}^{K-1} \sum_{i=0}^{\tau_k -1} \| \mathcal{G}_M(x_i^k) \|_2 ^2 \le \epsilon ,
\end{equation*}
and (ii) the oracle complexities for evaluations and Jacobians of inner components $g_{\xi}(\cdot)$ respectively are at most 
\begin{equation*}
\widetilde{\Theta} \left( \epsilon^{-3} \log(1/\Delta) \right) \quad\text{and}\quad \widetilde{\Theta} \left( \epsilon^{-2} \log(1/\Delta) \right) . 
\end{equation*}
\end{corollary}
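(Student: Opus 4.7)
The plan is to invoke Theorem~\ref{thm:unified-thm-short} after substituting the explicit $\mathcal{C}(K,\boldsymbol{\tau},\Delta)$ and functions $\{\gamma_l\},\{\lambda_l\}$ supplied by Lemma~\ref{lemma:mini-batch}, and then to convert the per-iteration batch sizes into a total oracle count. Because Lemma~\ref{lemma:mini-batch} gives $\gamma_1=\gamma_2=\lambda_1=0$, the three conditions \eqref{eq: thm-unified-thm-short,eq-7}, \eqref{eq: thm-unified-thm-short,eq-8}, and \eqref{eq: thm-unified-thm-short,eq-9} hold automatically for every choice of $K,\boldsymbol{\tau},A,B$. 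The remaining work is to verify the other six hypotheses and to perform a routine arithmetic count for part~(ii).

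Conditions \eqref{eq: thm-unified-thm-short,eq-2} and \eqref{eq: thm-unified-thm-short,eq-3} on $\overline{\delta}$ and $\overline{\epsilon}$ are built into the hypotheses of the corollary. Condition \eqref{eq: thm-unified-thm-short,eq-4} is exactly a lower bound $\Sigma_\tau\ge 150M(\Phi(x_0^0)-\Phi^*)/\epsilon$, which is met by taking $C_\Sigma\ge 150M(\Phi(x_0^0)-\Phi^*)$. For the containment $\theta\in\mathcal{C}(K,\boldsymbol{\tau},\Delta/2)$ of \eqref{eq: thm-unified-thm-short,eq-1} and the two noise bounds \eqref{eq: thm-unified-thm-short,eq-5}--\eqref{eq: thm-unified-thm-short,eq-6}, I substitute Lemma~\ref{lemma:mini-batch}'s formulas for $\gamma_0$ and $\lambda_0$ at confidence level $\Delta/2$. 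Each of the three requirements rearranges to a lower bound on $A$ or $B$ of the form
\[
A \;\ge\; c_A\,\epsilon^{-2}\log\!\Bigl(\tfrac{4(m+1)\Sigma_\tau}{\Delta}\Bigr)
\qquad\text{or}\qquad
B \;\ge\; c_B\,\epsilon^{-1}\log\!\Bigl(\tfrac{4(m+n)\Sigma_\tau}{\Delta}\Bigr),
\]
with $c_A, c_B$ finite constants determined by $l_f,L_g,M,\sigma_g,\sigma_{g'}$. Taking $C_A$ and $C_B$ at least as large as the maximum of the corresponding constants across these three requirements forces $A$ and $B$ simultaneously to satisfy all three and to lie in $\mathcal{C}(K,\boldsymbol{\tau},\Delta/2)$. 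Theorem~\ref{thm:unified-thm-short} then directly yields part~(i).

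For part~(ii), the mini-batch estimator makes $A$ component evaluations and $B$ component Jacobian evaluations per inner iteration, so the total oracle costs over the $\Sigma_\tau$ inner iterations are $\Sigma_\tau\cdot A$ and $\Sigma_\tau\cdot B$. Since $\Sigma_\tau=\Theta(1/\epsilon)$, we have $\log(\Sigma_\tau/\Delta)=O(\log(1/\epsilon)+\log(1/\Delta))$, and substituting the chosen scalings collapses these totals to $\widetilde{\Theta}(\epsilon^{-3}\log(1/\Delta))$ and $\widetilde{\Theta}(\epsilon^{-2}\log(1/\Delta))$ respectively, with the $\widetilde{\Theta}$ absorbing the $\log(1/\epsilon)$ factor.

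I do not anticipate a genuine obstacle: once Theorem~\ref{thm:unified-thm-short} and Lemma~\ref{lemma:mini-batch} are in hand, the argument is essentially bookkeeping. The only mild subtlety is that the requirements on $A$ and $B$ contain $\log(\Sigma_\tau/\Delta)$ rather than a pure $\log(1/\Delta)$; this is not circular because $\Sigma_\tau$ depends only on $\epsilon$, so the extra log is polylogarithmic in $\epsilon$ and is simply the factor suppressed inside $\widetilde{\Theta}$. Ensuring ``$\epsilon$ sufficiently small'' lets the ceilings in $\Sigma_\tau,A,B$ be absorbed into the constants.
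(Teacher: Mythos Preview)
Your proposal is correct and follows essentially the same approach as the paper: plug Lemma~\ref{lemma:mini-batch} into Theorem~\ref{thm:unified-thm-short}, observe that $\gamma_1=\gamma_2=\lambda_1=0$ kills conditions \eqref{eq: thm-unified-thm-short,eq-7}--\eqref{eq: thm-unified-thm-short,eq-9}, reduce the remaining conditions to lower bounds on $\Sigma_\tau,A,B$, and then count $\Sigma_\tau\cdot A$ and $\Sigma_\tau\cdot B$ for the oracle complexities. Your remark about the $\log(\Sigma_\tau/\Delta)$ factor not being circular and being absorbed into $\widetilde{\Theta}$ is exactly the right justification for the final asymptotic form.
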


Note the two complexities $\widetilde{\Theta} \left( \epsilon^{-3} \log(1/\Delta) \right)$ and $\widetilde{\Theta} \left( \epsilon^{-2} \log(1/\Delta) \right)$ match the high probability guarantees in \cite{tran-dinh2020a}. Up to logarithm terms, our high probability results also agree with the expectation results of~\cite{Zhang2021}. In both cases, our theory improves prior Frobenius norm bounds to matrix operator norms. 

\subsubsection{Expectation Case Methods}
\label{subsubsec:expectation-methods}
Given $g$ is defined as an expectation~\eqref{eq:expectation-g}, we consider two different variance reduced schemes below, following the forms of~\eqref{eq:basic-VR} and~\eqref{eq:smart-VR}. Our unified theorem's guarantees for the first scheme requires fewer evaluations of $g_\xi'$ while the second requires fewer evaluations of $g_\xi$. As a result, both methods may be state-of-the-art depending on the relative cost of these two operations.

\noindent {\bf Application of Standard Variance Reduction for Expectations.}
First, we consider the variance-reduced estimator, defined in two cases, $i=0$ and $i>0$, as
\begin{equation}
\label{eq:sample-mean-plus-standard}
\tag{Est\textsubscript{1}}
\mathtt{estimator}_1 (x_i^k, i; \theta) :
\begin{cases}
\widetilde{g}_0^k = \frac{1}{A} \sum_{\xi\in \mathcal{A}_0^k} g_{\xi}(x_0^k) \\
\widetilde{J}_0^k = \frac{1}{B} \sum_{\xi\in \mathcal{B}_0^k} g'_{\xi}(x_0^k) \\
\widetilde{g}_i^k = \frac{1}{a} \sum_{\xi\in \mathcal{A}_i^k} \Bigl( g_{\xi}(x_i^k) - g_{\xi}(x_0^k) \Bigr) + \widetilde{g}_0^k\\
\widetilde{J}_i^k = \frac{1}{b} \sum_{\xi\in \mathcal{B}_i^k} \Bigl( g'_{\xi}(x_i^k) - g'_{\xi}(x_0^k) \Bigr) + \widetilde{J}_0^k.
\end{cases}
\end{equation}
This estimator simply applies the classic variance reduced update~\eqref{eq:basic-VR} independently to estimate both $g_\xi$ and $g_\xi'$: At the $(k,i)$-th iteration, we generate index sets $\mathcal{A}_i^k$ and $\mathcal{B}_i^k$ by sampling from distribution $D$. At the start of each epoch, namely $i=0$, the batch sizes are set to be $|\mathcal{A}_0^k| = A$ and $|\mathcal{B}_0^k| = B$. We still use the sample mean to construct $\widetilde{g}_0^k$ and $\widetilde{J}_0^k$, same as the mini-batch method. In the case $i>0$, we set $|\mathcal{A}_i^k| = a$ and $|\mathcal{B}_i^k| = b$, with $a<A$ and $b<B$.
It is also worth noting that, unlike the mini-batch method, $\mathtt{estimator}_1$ is history-dependent, since the construction of $\widetilde{g}_i^k$ and $\widetilde{J}_i^k$ 
{\color{blue} involve both the current iterate $x_i^k$ and the past iterate $x_0^k$. 
As a consequence, we need the following assumption to control the estimation error of $\widetilde{g}_i^k$ and $\widetilde{J}_i^k$ in the same spirit of Assumption~\ref{assumption:variance-like}.
\begin{assumption}
\label{assumption:uniform-Lipschitz}
There exist constants $\widehat{l}_g$ and $\widehat{L}_g$ such that for any $\xi \in \mathrm{supp}(D)$ and any $x,y\in \mathbb{R}^n$, $\|g_\xi(x)- g_\xi(y)\|_2 \le \widehat{l}_g \|x-y\|_2$ and $\|g'_\xi(x)- g'_\xi(y)\|_{\rm{op}} \le \widehat{L}_g \|x-y\|_2$.
\end{assumption}
Lipschitz continuity of $g_\xi$ and $g'_\xi$ uniformly for any $\xi$ enables us to use Bernstein inequality (Lemma~\ref{lemma:Matrix-Bernstein-corollary}) to verify Assumption \ref{assumption:general-estimation-error-bounds}. Different assumptions and concentration results could be applied instead. For example, our use of Bernstein could be tightened by additionally using mean-square bounds. Alternatively, one could leverage concentration inequalities~\cite{tropp2012user} directly based on moment bounds rather than uniform bounds. Any approach to validating Assumption \ref{assumption:general-estimation-error-bounds} suffices for our theory.
}

In (\ref{eq:sample-mean-plus-standard}), the parameter of $\mathtt{estimator}_1$ is $\theta = (A,B,a,b) \in \mathbb{N}_+^4$, which captures the batch sizes. The set $\mathcal{C}(K, \boldsymbol{\tau}, \Delta)$ and functions $\{\gamma_\ell \}_{\ell=0}^2$, $\{\lambda_\ell \}_{\ell=0}^1$ are given below.

\begin{lemma}
\label{lemma:sample-mean-plus-standard}
Suppose {\color{blue} Assumptions~\ref{assumption:variance-like},~\ref{assumption:uniform-Lipschitz} hold}. If $\mathtt{estimator}(x_i^k, i; \theta)$ is defined by (\ref{eq:sample-mean-plus-standard}), where $\theta= (A,B,a,b)$ and $\Theta= \mathbb{N}_+^4$, then Assumption \ref{assumption:general-estimation-error-bounds} holds with 
\begin{align*}
    &\mathcal{C}(K, \boldsymbol{\tau}, \Delta) = \left\{ (A,B,a,b) \in \mathbb{N}_+^4 : A,a \ge \frac{4}{9} \log \left( \frac{2(m+ 1) \Sigma_\tau}{\Delta} \right), \text{and } B,b \ge \frac{4}{9} \log \left( \frac{2(m+ n) \Sigma_\tau}{\Delta} \right) \right\} ,\\
    &\gamma_0(K, \boldsymbol{\tau}, \theta, \Delta ) = \frac{2 \sigma_g}{\sqrt{A}} \sqrt{\log \left( \frac{2(m+ 1) \Sigma_\tau}{\Delta} \right)}, \quad {\color{blue} \gamma_1(K, \boldsymbol{\tau}, \theta, \Delta ) = \frac{4 \widehat{l}_g}{\sqrt{a}} \sqrt{\log \left( \frac{2(m+ 1) \Sigma_\tau}{\Delta} \right)} }, \quad \gamma_2= 0,\\
    &\lambda_0(K, \boldsymbol{\tau}, \theta, \Delta ) = \frac{2 \sigma_{g'}}{\sqrt{B}} \sqrt{\log \left( \frac{2(m+ n) \Sigma_\tau}{\Delta} \right)}, \quad {\color{blue} \lambda_1(K, \boldsymbol{\tau}, \theta, \Delta ) = \frac{4 \widehat{L}_g}{\sqrt{b}} \sqrt{\log \left( \frac{2(m+ n) \Sigma_\tau}{\Delta} \right)} }.
\end{align*}
\end{lemma}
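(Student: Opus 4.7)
The plan is to establish the high-probability error bounds of Assumption~\ref{assumption:general-estimation-error-bounds} pointwise for each iteration index $(k,i)$ and then union bound over all of $\mathcal{I}(K, \boldsymbol{\tau})$. The key observation is that the $i=0$ case is exactly the mini-batch situation already handled in Lemma~\ref{lemma:mini-batch}, while for $i>0$ the variance-reduction decomposition splits the error into the $i=0$ term plus a new centered sum whose summands are controlled via the Lipschitz constants $l_g, L_g$ rather than the crude bound $\sigma_g, \sigma_{g'}$.

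First I would split, for any fixed $(k,i)$ with $i\geq 1$,
\begin{align*}
\widetilde{g}_i^k - g(x_i^k) &= \bigl(\widetilde{g}_0^k - g(x_0^k)\bigr) + \frac{1}{a}\sum_{\xi\in\mathcal{A}_i^k} Z_\xi,\\
Z_\xi &:= \bigl(g_\xi(x_i^k) - g_\xi(x_0^k)\bigr) - \bigl(g(x_i^k) - g(x_0^k)\bigr),
\end{align*}
and analogously for $\widetilde{J}_i^k - g'(x_i^k)$ with matrix-valued centered summands $W_\xi := \bigl(g'_\xi(x_i^k) - g'_\xi(x_0^k)\bigr) - \bigl(g'(x_i^k) - g'(x_0^k)\bigr)$. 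The $Z_\xi$ are i.i.d., mean zero, and bounded in $\ell_2$ by $2l_g\|x_i^k - x_0^k\|_2$ using Proposition~\ref{prop:g-and-g'-lip} and the Lipschitz assumption on $g_\xi$; the $W_\xi$ are i.i.d., mean zero, and bounded in operator norm by $2L_g\|x_i^k - x_0^k\|_2$ using the Lipschitz assumption on $g_\xi'$.

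Next I would apply the concentration inequalities underlying Lemma~\ref{lemma:mini-batch}: a vector Hoeffding/sub-Gaussian inequality on $\mathbb{R}^m$ for the $Z_\xi$ sum (contributing a dimensional factor of $m+1$ inside the log as in $\gamma_0$) and a matrix Bernstein bound on the $W_\xi$ sum (contributing the $m+n$ dimensional factor inside the log as in $\lambda_0$). This yields, with probability at least $1-\delta$ at iteration $(k,i)$,
\begin{align*}
\left\|\frac{1}{a}\sum_{\xi\in\mathcal{A}_i^k} Z_\xi\right\|_2 &\leq \frac{4 l_g}{\sqrt{a}}\|x_i^k - x_0^k\|_2 \sqrt{\log(2(m+1)/\delta)},\\
\left\|\frac{1}{b}\sum_{\xi\in\mathcal{B}_i^k} W_\xi\right\|_{\rm op} &\leq \frac{4 L_g}{\sqrt{b}}\|x_i^k - x_0^k\|_2 \sqrt{\log(2(m+n)/\delta)},
\end{align*}
provided $a,b$ exceed the lower bounds in $\mathcal{C}(K,\boldsymbol{\tau},\Delta)$ that ensure the Bernstein regime is sub-Gaussian rather than sub-exponential. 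Combining these with the Lemma~\ref{lemma:mini-batch} bounds at $(k,0)$ (applied once per outer loop $k$) and setting $\delta = \Delta / \Sigma_\tau$ in a union bound over $(k,i)\in\mathcal{I}(K,\boldsymbol{\tau})$ produces the target bounds with exactly the stated $\gamma_0,\gamma_1,\lambda_0,\lambda_1$, and with $\gamma_2=0$ since no quadratic term arises in this scheme.

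The main obstacle is the matrix case: one must choose the right matrix concentration tool (a Bernstein-type bound for sums of bounded, zero-mean random matrices) so that the dimensional prefactor is $m+n$ rather than $\min\{m,n\}$ or $mn$, and so that the resulting bound is truly in operator norm rather than Frobenius norm. This is precisely the step where the paper's claimed improvement over prior Frobenius-based work is realized, and it requires matching the variance parameters of the Bernstein bound against $\sigma_{g'}^2$ (for the $i=0$ term) and $(2L_g\|x_i^k-x_0^k\|_2)^2$ (for the VR increment) while absorbing the variance term into the bounded-support term so that only a single $\sqrt{\log}$ factor appears. The vector analog for $Z_\xi$ is then routine from the same argument applied on $\mathbb{R}^m$.
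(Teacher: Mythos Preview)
Your proposal is correct and follows essentially the same route as the paper: it decomposes the $i>0$ error into the $i=0$ mini-batch error plus a centered sum, bounds the summands via the Lipschitz constants $l_g, L_g$, applies the same matrix-Bernstein corollary (the paper's Lemma~\ref{lemma:Matrix-Bernstein-corollary}) conditionally on the iterate history, and then union-bounds over $\mathcal{I}(K,\boldsymbol{\tau})$ with $\delta$ set so that the total failure probability is $\Delta$. The only cosmetic differences are where the factor of $2$ is placed in the $\delta$ versus inside the log, and that the paper packages the pointwise concentration steps as Propositions~\ref{prop:error-bound-i=0-sample-mean}, \ref{prop:error-bound-g-standard}, and \ref{prop:error-bound-J}.
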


Using Lemma \ref{lemma:sample-mean-plus-standard} enables us to select the parameters $K, \boldsymbol{\tau}, \theta$ in Theorem~\ref{thm:unified-thm-short} and analyze the oracle complexities there yielding the following pair of results. Proofs of this lemma and both resulting corollaries are given in Section~\ref{sec:sample-derivations}.

\begin{corollary}[Algorithmic guarantee]
\label{cor:sample-mean-plus-standard}
Consider any $\Delta \in (0,1)$, $M>5 l_f L_g$, integer $\tau>0$, and any sufficiently small $\epsilon >0$. Suppose Assumption \ref{assumption:subroutine} holds for $\mathtt{solver}$, {\color{blue} Assumptions~\ref{assumption:variance-like},~\ref{assumption:uniform-Lipschitz} hold} for function $g$, $\inf_x \Phi(x) > -\infty$, $\mathtt{estimator}$ is defined by (\ref{eq:sample-mean-plus-standard}), and $\boldsymbol{\tau}$ is restricted to the form $\tau_0= \cdots= \tau_{K-1} = \tau$. Set $K = \lceil \frac{C_{\Sigma} \cdot \epsilon^{-1}}{\tau} \rceil$, $A = \lceil C_A \cdot \epsilon^{-2} \cdot \log ( \frac{4(m+ 1) K \tau}{\Delta} ) \rceil$, $B = \lceil C_B \cdot \epsilon^{-1} \cdot \log ( \frac{4(m+ n) K \tau}{\Delta} ) \rceil$, {\color{blue} $a = \lceil C_a \cdot \tau^2 \cdot \epsilon^{-1} \cdot \log ( \frac{4(m+ 1) K \tau}{\Delta} ) \rceil$, $b = \lceil C_b \cdot \tau^2 \cdot \log ( \frac{4(m+ n) K \tau}{\Delta} ) \rceil$}, $\overline{\delta} = \Delta/ (2K\tau)$, $\overline{\epsilon} = \epsilon/(5\cdot 30M)$, where $C_{\Sigma}, C_A, C_B, C_a, C_b$ are some constants, 
then with probability at least $1-\Delta$: (i) Algorithm $\ref{algo:general-framework}$'s iterates satisfy:
\begin{equation*}
\frac{1}{K \tau} \sum_{k=0}^{K-1} \sum_{i=0}^{\tau -1} \| \mathcal{G}_M(x_i^k) \|_2 ^2 \le \epsilon,
\end{equation*}
and (ii) provided $\tau = O(\epsilon^{-1})$, the oracle complexities for evaluations and Jacobians of inner components $g_{\xi}(\cdot)$ respectively are at most 
\begin{align}
\label{eq: cor-sample-mean-plus-standard,eq-1}
&\widetilde{\Theta} \left( (\epsilon^{-3} \tau^{-1} + 
\epsilon^{-2} \tau^2) \log(1/\Delta) \right)\\
\label{eq: cor-sample-mean-plus-standard,eq-2}
\text{and}\quad &\widetilde{\Theta} \left( (\epsilon^{-2} \tau^{-1} + \epsilon^{-1} \tau^2) \log(1/\Delta) \right) . 
\end{align}
\end{corollary}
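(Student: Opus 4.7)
The plan is to obtain both parts as direct consequences of Theorem~\ref{thm:unified-thm-short}, using Lemma~\ref{lemma:sample-mean-plus-standard} to supply explicit forms of $\gamma_0, \gamma_1, \gamma_2=0, \lambda_0, \lambda_1$ and of $\mathcal{C}(K,\boldsymbol{\tau},\Delta/2)$ for the estimator~(\ref{eq:sample-mean-plus-standard}). Throughout the argument $\Sigma_\tau = K\tau$, and the ceilings in the statement will freely absorb constants and logarithmic factors.

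First, I would verify that the chosen parameters satisfy conditions~(\ref{eq: thm-unified-thm-short,eq-1})--(\ref{eq: thm-unified-thm-short,eq-9}). Condition~(\ref{eq: thm-unified-thm-short,eq-2}) holds by the choice $\overline{\delta} = \Delta/(2K\tau)$, and (\ref{eq: thm-unified-thm-short,eq-3}) by the choice of $\overline{\epsilon}$. Condition~(\ref{eq: thm-unified-thm-short,eq-8}) is automatic since $\gamma_2 = 0$. For~(\ref{eq: thm-unified-thm-short,eq-4}), picking $C_\Sigma \ge 150 M (\Phi(x_0^0)-\Phi^*)$ suffices since $\Sigma_\tau \ge C_\Sigma \epsilon^{-1}$. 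The remaining four constraints~(\ref{eq: thm-unified-thm-short,eq-5}),(\ref{eq: thm-unified-thm-short,eq-6}),(\ref{eq: thm-unified-thm-short,eq-7}),(\ref{eq: thm-unified-thm-short,eq-9}), after substituting the forms from Lemma~\ref{lemma:sample-mean-plus-standard}, rearrange into explicit lower bounds $A \ge C_A \epsilon^{-2} \log(\cdot)$, $B \ge C_B \epsilon^{-1} \log(\cdot)$, $a \ge C_a (1+\tau)^2 \epsilon^{-1} \log(\cdot)$, and $b \ge C_b (1+\tau)^2 \log(\cdot)$, where each constant $C_A, C_B, C_a, C_b$ is polynomial in $M, l_f, l_g, L_g, \sigma_g, \sigma_{g'}$. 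The stated batch sizes are constructed exactly to meet these bounds (using $\Delta/2$ inside the logs, which is absorbed into the constant in the $\log(4(m+\cdot)K\tau/\Delta)$ form). Finally,~(\ref{eq: thm-unified-thm-short,eq-1}) is met because $A,a,B,b$ all exceed the logarithmic threshold defining $\mathcal{C}$ whenever $\epsilon$ is sufficiently small. Applying Theorem~\ref{thm:unified-thm-short} with probability parameter $\Delta$ then yields part~(i).

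For part~(ii), I would count oracle calls directly from~(\ref{eq:sample-mean-plus-standard}). Each of the $K$ outer loops spends $A$ evaluations of $g_\xi$ at $x_0^k$ and $B$ evaluations of $g_\xi'$ at $x_0^k$ (the $i=0$ step), then for each of the $\tau-1$ subsequent inner iterations spends $2a$ evaluations of $g_\xi$ (at $x_i^k$ and $x_0^k$) and $2b$ evaluations of $g_\xi'$. Hence the totals are $K(A + 2(\tau-1)a)$ and $K(B + 2(\tau-1)b)$. Substituting the scalings $K = \widetilde{\Theta}(\epsilon^{-1}/\tau)$, $A = \widetilde{\Theta}(\epsilon^{-2})$, $a = \widetilde{\Theta}((1+\tau)^2 \epsilon^{-1})$, $B = \widetilde{\Theta}(\epsilon^{-1})$, and $b = \widetilde{\Theta}((1+\tau)^2)$ produces
\[
K(A+2(\tau-1)a) = \widetilde{\Theta}\!\left(\epsilon^{-3}\tau^{-1} + \epsilon^{-2}\tau^2\right), \quad K(B+2(\tau-1)b) = \widetilde{\Theta}\!\left(\epsilon^{-2}\tau^{-1} + \epsilon^{-1}\tau^2\right),
\]
matching~(\ref{eq: cor-sample-mean-plus-standard,eq-1}) and~(\ref{eq: cor-sample-mean-plus-standard,eq-2}) (the $\log(1/\Delta)$ factor arises from the logarithmic prefactors of $A,B,a,b$, and the assumption $\tau = O(\epsilon^{-1})$ ensures $(1+\tau)^2 = \Theta(\tau^2)$ in the dominant regime).

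The only mildly delicate point is bookkeeping: the logarithms in Lemma~\ref{lemma:sample-mean-plus-standard} depend on $\Sigma_\tau = K\tau$, which itself depends on $\epsilon$, so one must choose the constants $C_\Sigma, C_A, C_B, C_a, C_b$ up front as functions of $M, l_f, l_g, L_g, \sigma_g, \sigma_{g'}$ and then verify, for $\epsilon$ small enough, that all nine conditions of Theorem~\ref{thm:unified-thm-short} are simultaneously met. This is the only place where ``sufficiently small $\epsilon$'' is used, and it is why the corollary states the bound in $\widetilde{\Theta}$ rather than tracking explicit constants.
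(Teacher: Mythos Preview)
Your proposal is correct and follows essentially the same approach as the paper's proof: substitute the explicit $\gamma_l,\lambda_l$ from Lemma~\ref{lemma:sample-mean-plus-standard} into the nine conditions of Theorem~\ref{thm:unified-thm-short}, solve for lower bounds on $A,B,a,b$ and $K\tau$, and then count oracle calls as $K(A+2(\tau-1)a)$ and $K(B+2(\tau-1)b)$. One small point: the hypothesis $\tau = O(\epsilon^{-1})$ is not what makes $(1+\tau)^2=\Theta(\tau^2)$ (that holds for any integer $\tau\ge 1$); rather, it is what ensures $K=\lceil C_\Sigma \epsilon^{-1}/\tau\rceil$ actually scales like $\epsilon^{-1}/\tau$ instead of being pinned at $K=1$.
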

The oracle complexity upper bounds for evaluations and Jacobians in Corollary \ref{cor:sample-mean-plus-standard} can be optimized through careful selection of the epoch length $\tau$.
\begin{corollary}[Optimized complexity bounds]
\label{cor:sample-mean-plus-standard-bounds}
The minimal asymptotic rates, with respect to $\tau$, of (\ref{eq: cor-sample-mean-plus-standard,eq-1}) and (\ref{eq: cor-sample-mean-plus-standard,eq-2}) are $\widetilde{\Theta} (\epsilon^{-8/3} \log(1/\Delta))$ and $\widetilde{\Theta} (\epsilon^{-5/3} \log(1/\Delta))$ respectively, which are simultaneously achieved by setting $\tau = \Theta(\epsilon^{-1/3})$.
\end{corollary}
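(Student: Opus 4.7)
The plan is to treat each of the two complexity bounds as a univariate optimization problem in $\tau$, balance the decreasing and increasing terms, and then observe that the two optimizers coincide, so that a single choice of $\tau$ simultaneously minimizes both rates.

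Concretely, both upper bounds in Corollary~\ref{cor:sample-mean-plus-standard} have the form $\widetilde{\Theta}((\alpha\tau^{-1} + \beta\tau^{2})\log(1/\Delta))$ with $\alpha,\beta$ depending on $\epsilon$. For any such expression, AM-GM (or equivalently, setting the $\tau$-derivative of $\alpha\tau^{-1}+\beta\tau^{2}$ to zero) shows the minimum over $\tau>0$ is attained at $\tau_{\star} = (\alpha/(2\beta))^{1/3} = \Theta((\alpha/\beta)^{1/3})$, with optimized value $\Theta(\alpha^{2/3}\beta^{1/3})$. I would apply this twice: for the evaluation bound~\eqref{eq: cor-sample-mean-plus-standard,eq-1} we have $(\alpha,\beta) = (\epsilon^{-3},\epsilon^{-2})$, giving $\tau_{\star} = \Theta(\epsilon^{-1/3})$ and an optimized rate of $\widetilde{\Theta}(\epsilon^{-8/3}\log(1/\Delta))$; for the Jacobian bound~\eqref{eq: cor-sample-mean-plus-standard,eq-2} we have $(\alpha,\beta) = (\epsilon^{-2},\epsilon^{-1})$, giving the same $\tau_{\star} = \Theta(\epsilon^{-1/3})$ and an optimized rate of $\widetilde{\Theta}(\epsilon^{-5/3}\log(1/\Delta))$.

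The reason the two optimizers coincide is that the ratio $\alpha/\beta = \epsilon^{-1}$ is identical in both cases; this is really the key observation and the only substantive content of the corollary. Finally, I would verify that $\tau = \Theta(\epsilon^{-1/3})$ respects the feasibility constraint $\tau = O(\epsilon^{-1})$ imposed by Corollary~\ref{cor:sample-mean-plus-standard}, which holds trivially for all sufficiently small $\epsilon$. I do not anticipate any real obstacle: the argument reduces to an elementary balancing computation, and the only thing worth highlighting in writing it up is that a single $\tau$ suffices to achieve both improved rates at once.
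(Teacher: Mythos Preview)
Your proposal is correct and essentially identical to the paper's argument: both balance the decreasing term $\alpha\tau^{-1}$ against the increasing term $\beta\tau^{2}$ and find the common optimizer $\tau=\Theta(\epsilon^{-1/3})$. The only cosmetic difference is that the paper parametrizes $\tau=\Theta(\epsilon^{-\beta})$ and minimizes the exponent $\max\{3-\beta,2+2\beta\}$ (respectively $\max\{2-\beta,1+2\beta\}$) over $\beta\ge 0$, whereas you invoke AM-GM/calculus directly on $\alpha\tau^{-1}+\beta\tau^{2}$; both yield $\beta=1/3$ and the same optimized rates.
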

Comparing the two rates in Corollary \ref{cor:sample-mean-plus-standard-bounds} with those in Corollary \ref{cor:mini-batch} suggests the variance reduced estimator $\mathtt{estimator}_1$ dominates the mini-batch method $\mathtt{estimator}_0$ in the sense that, $\mathtt{estimator}_1$ achieves $\frac{1}{K \tau} \sum_{k=1}^K \sum_{i=0}^{\tau -1} \| \mathcal{G}_M(x_i^k) \|_2 ^2 \le \epsilon$ in high probability with less evaluations of $g_\xi(\cdot)$ and $g'_\xi(\cdot)$.

\noindent {\bf Application of Modified Variance Reduction for Expectations.}
Next, we consider a modified variance-reduced estimator that leverages Jacobian evaluations to provide a better estimate of the value of $g_\xi(x)$. Again we consider $i=0$ and $i>0$ with
$\mathtt{estimator}_2 (x_i^k, i; \theta)$ defined as
\begin{equation}
\label{eq:sample-mean-plus-smart}
\tag{Est\textsubscript{2}}
\begin{cases}
\widetilde{g}_0^k = \frac{1}{A} \sum_{\xi\in \mathcal{A}_0^k} g_{\xi}(x_0^k) \\
\widetilde{J}_0^k = \frac{1}{B} \sum_{\xi\in \mathcal{B}_0^k} g'_{\xi}(x_0^k) \\
\widetilde{g}_i^k = \frac{1}{a} \sum_{\xi\in \mathcal{A}_i^k} \Bigl( g_{\xi}(x_i^k) - g_{\xi}(x_0^k) - g'_{\xi}(x_0^k) (x_i^k-x_0^k) \Bigr) + \widetilde{g}_0^k + \widetilde{J}_0^k (x_i^k-x_0^k) \\
\widetilde{J}_i^k = \frac{1}{b} \sum_{\xi\in \mathcal{B}_i^k} \Bigl( g'_{\xi}(x_i^k) - g'_{\xi}(x_0^k) \Bigr) + \widetilde{J}_0^k .
\end{cases}
\end{equation}
Here~\eqref{eq:sample-mean-plus-smart} applies a standard variance reduction update~\eqref{eq:basic-VR} to estimate the Jacobian and a first-order corrected update~\eqref{eq:smart-VR} to estimate the value of $g$ itself. Again this estimator is parameterized by the four batch sizes $\theta = (A,B,a,b) \in \mathbb{N}_+^4$. The following lemma characterizes this modified scheme in terms of Assumption~\ref{assumption:general-estimation-error-bounds}.

\begin{lemma}
\label{lemma:sample-mean-plus-smart}
Suppose {\color{blue} Assumptions~\ref{assumption:variance-like},~\ref{assumption:uniform-Lipschitz} hold}. If $\mathtt{estimator}(x_i^k, i; \theta)$ is defined by (\ref{eq:sample-mean-plus-smart}), where $\theta= (A,B,a,b)$ and $\Theta= \mathbb{N}_+^4$, then Assumption \ref{assumption:general-estimation-error-bounds} holds with 
\begin{align*}
    &\mathcal{C}(K, \boldsymbol{\tau}, \Delta) = \left\{ (A,B,a,b) \in \mathbb{N}_+^4 : A,a \ge \frac{4}{9} \log \left( \frac{2(m+ 1) \Sigma_\tau}{\Delta} \right), \text{and } B,b \ge \frac{4}{9} \log \left( \frac{2(m+ n) \Sigma_\tau}{\Delta} \right) \right\} ,\\
    &\gamma_0(K, \boldsymbol{\tau}, \theta, \Delta ) = \frac{2 \sigma_g}{\sqrt{A}} \sqrt{\log \left( \frac{2(m+ 1) \Sigma_\tau}{\Delta} \right)}, \quad \gamma_1(K, \boldsymbol{\tau}, \theta, \Delta ) = \lambda_0(K, \boldsymbol{\tau}, \theta, \Delta ) = \frac{2 \sigma_{g'}}{\sqrt{B}} \sqrt{\log \left( \frac{2(m+ n) \Sigma_\tau}{\Delta} \right)},\\
    &{\color{blue} \gamma_2(K, \boldsymbol{\tau}, \theta, \Delta ) = \frac{2 \widehat{L}_g}{\sqrt{a}} \sqrt{\log \left( \frac{2(m+ 1) \Sigma_\tau}{\Delta} \right)}, \quad \lambda_1(K, \boldsymbol{\tau}, \theta, \Delta ) = \frac{4 \widehat{L}_g}{\sqrt{b}} \sqrt{\log \left( \frac{2(m+ n) \Sigma_\tau}{\Delta} \right)} }.
\end{align*}
\end{lemma}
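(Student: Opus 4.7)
The strategy is to decompose the estimation errors into a sum of centered, i.i.d.\ contributions conditional on $x_0^k$ and $x_i^k$, bound each contribution pointwise using the Lipschitz hypotheses, and then invoke matrix Bernstein concentration in operator norm. For the Jacobian error, write
\[
\widetilde{J}_i^k - g'(x_i^k) \;=\; \underbrace{\tfrac{1}{b}\!\sum_{\xi\in \mathcal{B}_i^k}\!\Bigl[\bigl(g'_\xi(x_i^k)-g'_\xi(x_0^k)\bigr) - \bigl(g'(x_i^k)-g'(x_0^k)\bigr)\Bigr]}_{\text{term (I)}} \;+\; \underbrace{\widetilde{J}_0^k - g'(x_0^k)}_{\text{term (II)}}.
\]
Each summand in (I) has operator norm at most $2 L_g\|x_i^k-x_0^k\|_2$ by the Jacobian Lipschitz assumption, while each summand in (II) has operator norm at most $2\sigma_{g'}$ by Assumption~\ref{assumption:variance-like}. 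Matrix Bernstein applied to (I) (conditional on the pair $(x_0^k,x_i^k)$) and to (II) (conditional on $x_0^k$) gives deviations of order $\tfrac{L_g}{\sqrt{b}}\|x_i^k-x_0^k\|_2$ and $\tfrac{\sigma_{g'}}{\sqrt{B}}$ respectively, with $\log(m+n)$ dimension factors because the matrices are $m\times n$. This yields the claimed $\lambda_0,\lambda_1$ once the lower bounds $B,b \ge \tfrac{4}{9}\log(\cdot)$ ensure the subgaussian (variance-dominated) regime of Bernstein.

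For the value error, add and subtract to get
\[
\widetilde{g}_i^k - g(x_i^k) \;=\; \underbrace{\tfrac{1}{a}\!\sum_{\xi\in \mathcal{A}_i^k}\!\bigl[\Delta_\xi - \mathbb{E}\Delta_\xi\bigr]}_{\text{(III)}} \;+\; \underbrace{\widetilde{g}_0^k - g(x_0^k)}_{\text{(IV)}} \;+\; \underbrace{\bigl(\widetilde{J}_0^k - g'(x_0^k)\bigr)(x_i^k-x_0^k)}_{\text{(V)}},
\]
where $\Delta_\xi := g_\xi(x_i^k)-g_\xi(x_0^k)-g'_\xi(x_0^k)(x_i^k-x_0^k)$. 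By the standard consequence of the Jacobian being $L_g$-Lipschitz, $\|\Delta_\xi\|_2 \le \tfrac{L_g}{2}\|x_i^k-x_0^k\|_2^2$, so each summand in (III) is bounded by $L_g\|x_i^k-x_0^k\|_2^2$; matrix Bernstein on the $m\times 1$ embedding yields a deviation of order $\tfrac{L_g}{\sqrt{a}}\|x_i^k-x_0^k\|_2^2$, producing $\gamma_2$. Term (IV) is exactly the $i=0$ case handled as in Lemma~\ref{lemma:mini-batch} and gives $\gamma_0$. Term (V) is controlled using the operator norm bound on $\widetilde{J}_0^k - g'(x_0^k)$ from the $i=0$ analysis above; factoring the $\|x_i^k-x_0^k\|_2$ out yields $\gamma_1 = \tfrac{2\sigma_{g'}}{\sqrt{B}}\sqrt{\log(\cdot)}$, matching the stated value.

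To combine, I would first fix an index $(k,i)$ and show that conditional on the sigma-field generated by all randomness up through the beginning of step $(k,i)$, (III) and the inner-sum piece of (I) are averages of conditionally independent centered matrices with the uniform bounds above; meanwhile (II), (IV), and (V) depend only on the epoch-start randomness. Apply matrix Bernstein at the individual failure level $\Delta/(2\Sigma_\tau)$ to each of (I), (II), (III), (IV) separately, then take a union bound over all $(k,i)\in\mathcal{I}(K,\boldsymbol{\tau})$ and over the two inequalities; the total failure probability is at most $\Delta$. The set $\mathcal{C}(K,\boldsymbol{\tau},\Delta)$ is then exactly the batch-size regime in which the Bernstein tail collapses to its subgaussian form (where the variance term $\sigma^2/n$ beats the range/$n$ term), giving clean $1/\sqrt{\cdot}$ rates.

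The main obstacle is making the operator-norm concentration for (I) work cleanly: unlike for vectors or Frobenius norm, one must rely on matrix Bernstein/Hoeffding to keep the dimension dependence logarithmic in $m+n$. This is exactly the technical ingredient that lets us replace the Frobenius constants of prior work with the operator-norm constants here. The deterministic pointwise bound $2L_g\|x_i^k-x_0^k\|_2$ on the $(i)$-summands is crucial; it is where the Jacobian Lipschitz hypothesis enters in operator norm rather than Frobenius. Everything else is bookkeeping with the union bound and recognition that $\Sigma_\tau$ absorbs the total number of events.
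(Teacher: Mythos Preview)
Your proposal is correct and follows essentially the same approach as the paper: the same decompositions (your (I)--(V) are exactly the splits used in Propositions~\ref{prop:error-bound-i=0-sample-mean}, \ref{prop:error-bound-g-smart}, and \ref{prop:error-bound-J}), the same matrix Bernstein bound conditional on the iterate history, and the same union bound over $(k,i)\in\mathcal{I}(K,\boldsymbol{\tau})$ at level $\delta=\Delta/(2\Sigma_\tau)$. One minor bookkeeping point: the summands in your term (II) are bounded by $\sigma_{g'}$ (not $2\sigma_{g'}$), which is what yields the stated constant $2\sigma_{g'}/\sqrt{B}$ after applying Lemma~\ref{lemma:Matrix-Bernstein-corollary}; and since (II), (IV), (V) depend only on the epoch-start sample, they contribute $2K$ rather than $2\Sigma_\tau$ events to the union bound, which is how the total failure probability comes out to exactly $\Delta$.
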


From this, we have the following two corollaries analyzing $\mathtt{estimator}_2$.

\begin{corollary}[Algorithmic guarantee]
\label{cor:sample-mean-plus-smart}
Consider any $\Delta \in (0,1)$, $M>5 l_f L_g$, integer $\tau>0$, and any sufficiently small $\epsilon >0$. Suppose Assumption \ref{assumption:subroutine} holds for $\mathtt{solver}$, {\color{blue} Assumptions~\ref{assumption:variance-like},~\ref{assumption:uniform-Lipschitz} hold} for function $g$, $\inf_x \Phi(x) > -\infty$, $\mathtt{estimator}$ is defined by (\ref{eq:sample-mean-plus-smart}), and $\boldsymbol{\tau}$ is restricted to the form $\tau_0= \cdots= \tau_{K-1} = \tau$. Set $K = \lceil \frac{C_{\Sigma} \cdot \epsilon^{-1}}{\tau} \rceil$, $A = \lceil C_A \cdot \epsilon^{-2} \cdot \log ( \frac{4(m+ 1) K \tau}{\Delta} ) \rceil$, {\color{blue} $B = \lceil C_B \cdot \tau^2 \cdot \epsilon^{-1} \cdot \log ( \frac{4(m+ n) K \tau}{\Delta} ) \rceil$, $a = \lceil C_a \cdot \tau^4 \cdot \log ( \frac{4(m+ 1) K \tau}{\Delta} ) \rceil$, $b = \lceil C_b \cdot \tau^2 \cdot \log ( \frac{4(m+ n) K \tau}{\Delta} ) \rceil$}, $\overline{\delta} = \Delta/ (2K\tau)$, $\overline{\epsilon} = \epsilon/(5\cdot 30M)$, where $C_{\Sigma}, C_A, C_B, C_a, C_b$ are some constants, 
then with probability at least $1-\Delta$: (i) Algorithm $\ref{algo:general-framework}$'s iterates satisfy:
\begin{equation*}
\frac{1}{K \tau} \sum_{k=0}^{K-1} \sum_{i=0}^{\tau -1} \| \mathcal{G}_M(x_i^k) \|_2 ^2 \le \epsilon,
\end{equation*}
and (ii) provided $\tau = O(\epsilon^{-1})$, the oracle complexities for evaluations and Jacobians of inner components $g_{\xi}(\cdot)$ respectively are at most
\begin{align}
\label{eq: cor-sample-mean-plus-smart,eq-1}
&\widetilde{\Theta} \left( (\epsilon^{-3} \tau^{-1} + \epsilon^{-1} \tau^4) \log(1/\Delta) \right)\\
\label{eq: cor-sample-mean-plus-smart,eq-2}
\text{and}\quad &\widetilde{\Theta} \left( (\epsilon^{-2} \tau + \epsilon^{-1} \tau^4) \log(1/\Delta) \right) .
\end{align}
\end{corollary}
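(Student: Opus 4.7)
The plan is to derive this corollary as a direct specialization of Theorem~\ref{thm:unified-thm-short}, following the template already used for Corollary~\ref{cor:sample-mean-plus-standard}. First, Lemma~\ref{lemma:sample-mean-plus-smart} supplies the explicit forms of $\gamma_0,\gamma_1,\gamma_2,\lambda_0,\lambda_1$ and the feasible set $\mathcal{C}(K,\boldsymbol{\tau},\Delta)$ associated with $\mathtt{estimator}_2$. With these in hand, I would verify the nine constraints \eqref{eq: thm-unified-thm-short,eq-1}--\eqref{eq: thm-unified-thm-short,eq-9} in turn. The conditions on $\overline{\delta},\overline{\epsilon},\Sigma_\tau$ follow immediately from the stated choices once $C_\Sigma$ is chosen to dominate $150M(\Phi(x_0^0)-\Phi^*)$. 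The remaining five conditions, after squaring and rearranging, translate into concrete lower bounds on $A,B,a,b$ of orders $\widetilde{\Theta}(\epsilon^{-2})$, $\widetilde{\Theta}((1+\tau)^2\epsilon^{-1})$, $\widetilde{\Theta}((1+\tau)^4)$, and $\widetilde{\Theta}((1+\tau)^2)$ respectively, matching the stated choices once $C_A,C_B,C_a,C_b$ absorb the fixed constants $l_f,L_g,M,\sigma_g,\sigma_{g'}$. Membership $\theta\in\mathcal{C}(K,\boldsymbol{\tau},\Delta/2)$ is then automatic for small $\epsilon$, since every batch grows at least logarithmically in $K\tau/\Delta$. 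Claim (i) then follows directly from Theorem~\ref{thm:unified-thm-short}.

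For claim (ii) I would tally oracle queries per epoch. The reference step $i=0$ uses $A$ evaluations of $g_\xi$ and $B$ of $g_\xi'$. At each of the $\tau-1$ inner steps, computing $\widetilde{g}_i^k$ requires $2a$ evaluations of $g_\xi$ (at $x_i^k$ and $x_0^k$) plus $a$ Jacobians $g_\xi'(x_0^k)$, while $\widetilde{J}_i^k$ requires $2b$ Jacobian evaluations. Summing and multiplying by $K=\Theta(\epsilon^{-1}/\tau)$ yields
\begin{align*}
\text{$g_\xi$ evaluations:}\quad &K(A+\tau a)=\widetilde{\Theta}\!\left(\epsilon^{-3}\tau^{-1}+\epsilon^{-1}(1+\tau)^4\right),\\
\text{$g_\xi'$ evaluations:}\quad &K(B+\tau(a+b))=\widetilde{\Theta}\!\left(\epsilon^{-2}(1+\tau)^2\tau^{-1}+\epsilon^{-1}(1+\tau)^4\right).
\end{align*}
Under the assumption $\tau=O(\epsilon^{-1})$, dropping dominated terms recovers the stated bounds \eqref{eq: cor-sample-mean-plus-smart,eq-1} and \eqref{eq: cor-sample-mean-plus-smart,eq-2}.

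The main obstacle is bookkeeping rather than any new technical ingredient. The subtlety distinguishing $\mathtt{estimator}_2$ from $\mathtt{estimator}_1$ is the presence of the quadratic error term $\gamma_2$ in Lemma~\ref{lemma:sample-mean-plus-smart}, which through constraint \eqref{eq: thm-unified-thm-short,eq-8} forces $a=\widetilde{\Theta}((1+\tau)^4)$ rather than the $\widetilde{\Theta}((1+\tau)^2)$ that sufficed for $\mathtt{estimator}_1$; this is precisely what produces the $\epsilon^{-1}\tau^4$ term in both oracle bounds. Similarly, because $\gamma_1$ here is governed by $\sigma_{g'}$ (it measures the linearization residual from the carried Jacobian $\widetilde{J}_0^k$), constraint \eqref{eq: thm-unified-thm-short,eq-7} inflates $B$ to $\widetilde{\Theta}((1+\tau)^2\epsilon^{-1})$, explaining the $\epsilon^{-2}\tau$ term in the Jacobian count. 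Ensuring the absorbed constants $C_\Sigma,C_A,C_B,C_a,C_b$ are chosen consistently across all nine constraints, and checking that the $\epsilon^{-1}\tau^4$ contribution from $a$ indeed dominates the contributions of $B$ and $b$ in the $g_\xi'$ count for large $\tau$, is where careful accounting is needed.
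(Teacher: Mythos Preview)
Your proposal is correct and follows essentially the same route as the paper: substitute the bounds from Lemma~\ref{lemma:sample-mean-plus-smart} into the nine conditions of Theorem~\ref{thm:unified-thm-short}, read off the required orders for $A,B,a,b$, and then tally per-epoch oracle queries (with the key observation that constructing $\widetilde{g}_i^k$ for $i>0$ costs an additional $a$ Jacobian evaluations at $x_0^k$). The only slip is a typo in your Jacobian tally, where you write $K(B+\tau(a+b))$ despite having just said $\widetilde{J}_i^k$ costs $2b$ Jacobian calls; the paper writes $KB+K(\tau-1)(a+2b)$, but since $a=\widetilde{\Theta}(\tau^4)$ dominates $b=\widetilde{\Theta}(\tau^2)$ this makes no difference to the final asymptotic.
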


\begin{corollary}[Optimized complexity bounds]
\label{cor:sample-mean-plus-smart-bounds}
(i) The minimal asymptotic evaluation complexity, with respect to $\tau$, of (\ref{eq: cor-sample-mean-plus-smart,eq-1}) is $\widetilde{\Theta} (\epsilon^{-13/5} \log(1/\Delta))$, achieved by setting $\tau = \Theta(\epsilon^{-2/5})$. In this case, (\ref{eq: cor-sample-mean-plus-smart,eq-2}) is also $\widetilde{\Theta} (\epsilon^{-13/5} \log(1/\Delta))$. 
(ii) The minimal asymptotic Jacobian complexity, with respect to $\tau$, of (\ref{eq: cor-sample-mean-plus-smart,eq-2}) is $\widetilde{\Theta} (\epsilon^{-2} \log(1/\Delta))$, achieved at $\tau = \Theta(1)$. In this case, (\ref{eq: cor-sample-mean-plus-smart,eq-1}) is $\widetilde{\Theta} (\epsilon^{-3} \log(1/\Delta))$.
\end{corollary}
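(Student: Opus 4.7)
The corollary is a pure optimization-over-$\tau$ exercise on the two asymptotic expressions already produced by Corollary~\ref{cor:sample-mean-plus-smart}, so my plan is to treat $\tau$ as a positive real parameter, balance or monotonicity-analyze each objective, and then confirm the claimed rates by substitution. Throughout I will suppress the common $\log(1/\Delta)$ factor and the $\widetilde{\Theta}$ tilde, since these are unaffected by the choice of $\tau$.

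For part (i), the evaluation complexity is $T_{\mathrm{ev}}(\tau) := \epsilon^{-3}\tau^{-1} + \epsilon^{-1}\tau^4$. The first term is strictly decreasing in $\tau$ and the second strictly increasing, so the unique minimizer on $(0,\infty)$ is obtained by equating the two terms: $\epsilon^{-3}\tau^{-1} = \epsilon^{-1}\tau^4$ gives $\tau^{5} = \epsilon^{-2}$, i.e.\ $\tau = \Theta(\epsilon^{-2/5})$. At this $\tau$, each term equals $\epsilon^{-3}\cdot\epsilon^{2/5} = \epsilon^{-13/5}$, so $T_{\mathrm{ev}}(\tau) = \Theta(\epsilon^{-13/5})$. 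Substituting the same $\tau = \Theta(\epsilon^{-2/5})$ into the Jacobian expression $T_{\mathrm{jac}}(\tau) := \epsilon^{-2}\tau + \epsilon^{-1}\tau^4$ gives $\epsilon^{-2}\cdot\epsilon^{-2/5} + \epsilon^{-1}\cdot\epsilon^{-8/5} = \epsilon^{-12/5} + \epsilon^{-13/5} = \Theta(\epsilon^{-13/5})$, matching the claim. I should also briefly note that the hypothesis $\tau = O(\epsilon^{-1})$ of Corollary~\ref{cor:sample-mean-plus-smart} is satisfied, since $\epsilon^{-2/5} = O(\epsilon^{-1})$ for small $\epsilon$.

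For part (ii), the objective $T_{\mathrm{jac}}(\tau) = \epsilon^{-2}\tau + \epsilon^{-1}\tau^4$ is strictly increasing in $\tau$ on $\tau \ge 1$, so its minimum over integer $\tau \ge 1$ is attained at $\tau = 1 = \Theta(1)$, giving $T_{\mathrm{jac}} = \epsilon^{-2} + \epsilon^{-1} = \Theta(\epsilon^{-2})$. Substituting $\tau = \Theta(1)$ into $T_{\mathrm{ev}}(\tau)$ yields $\epsilon^{-3} + \epsilon^{-1} = \Theta(\epsilon^{-3})$, again matching the claim. The $\tau = O(\epsilon^{-1})$ constraint is trivially satisfied.

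There is no substantive obstacle here: both objectives are simple sums of two monomials in $\tau$, one with a fractional (or zero) balancing solution and one monotone. The only points worth handling carefully are (a) that the ``$\Theta$'' choices of $\tau$ can be rounded to integers without affecting the asymptotics, since $T_{\mathrm{ev}}$ and $T_{\mathrm{jac}}$ are each within a constant factor of their values at the real optimum after rounding $\tau$ up or down, and (b) confirming that the derived $\tau$ values still lie in the admissible regime $\tau = O(\epsilon^{-1})$ required by Corollary~\ref{cor:sample-mean-plus-smart}. With these remarks the proof reduces to the two balancing/monotonicity calculations above.
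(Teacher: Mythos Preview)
Your proposal is correct and takes essentially the same approach as the paper. The paper parametrizes $\tau = \Theta(\epsilon^{-\beta})$ and minimizes $\max\{3-\beta,1+4\beta\}$ and $\max\{2+\beta,1+4\beta\}$ over $\beta\ge 0$, while you balance the two monomials directly (or note monotonicity for part~(ii)); these are equivalent presentations of the same computation, and your extra remarks on integer rounding and the $\tau=O(\epsilon^{-1})$ constraint are fine.
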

\begin{remark} \label{remark:comparison-expectation-methods}
We can compare the asymptotic rates in Corollary \ref{cor:sample-mean-plus-smart-bounds} with those in Corollary \ref{cor:sample-mean-plus-standard-bounds}. Note that $\frac{13}{5} < \frac{8}{3}$ and $\frac{5}{3} < 2$. So Corollary \ref{cor:sample-mean-plus-smart-bounds}(i) suggests the optimized asymptotic evaluation complexity bound of $\mathtt{estimator}_2$ is lower than that of $\mathtt{estimator}_1$. Corollary \ref{cor:sample-mean-plus-smart-bounds}(ii) implies that the asymptotic Jacobian complexity bound of $\mathtt{estimator}_2$ is always higher than the optimized bound of $\mathtt{estimator}_1$. Hence neither method's guarantee uniformly dominates the other. Depending on the relative cost between evaluations and Jacobians, the best method varies.
\end{remark}
{\color{blue} To the best of our knowledge, this is the first guarantee for either SVRG-type estimators~\eqref{eq:basic-VR} or~\eqref{eq:smart-VR} on composite problems in the expectation setting~\eqref{eq:expectation-g}. Depending on the relative sizes of problem constants, these results can improve on the SARAH/SPIDER-based methods studied by~\cite{tran-dinh2020a,Zhang2021}, as discussed in Section~\ref{subsec:relatedWork}.}

\subsubsection{Finite Average Case Methods}
\label{subsubsec:finite-average-methods}
Now we focus on the finite average setting in \eqref{eq:finite-sum-g} and consider the natural extensions of the above estimators. Again, we find neither one of these two estimator's guarantees dominates the other. 
{\color{blue} In this case, the comparison between these two methods' guarantees depends on the relative size of $1/\epsilon$ and the number of summands $N$.}

\noindent {\bf Application of Standard Variance Reduction for Finite Averages.}
First, we consider a variant of $\mathtt{estimator}_1$, defined in two cases, $i=0$ and $i>0$, as
\begin{equation}
\label{eq:exact-eval-plus-standard}
\tag{Est\textsubscript{3}}
\mathtt{estimator}_3 (x_i^k, i; \theta) :
\begin{cases}
\widetilde{g}_0^k = \frac{1}{N} \sum_{j=1}^N g_j(x_0^k) = g(x_0^k) \\
\widetilde{J}_0^k = \frac{1}{N} \sum_{j=1}^N g'_j(x_0^k) = g'(x_0^k) \\
\widetilde{g}_i^k = \frac{1}{a} \sum_{j\in \mathcal{A}_i^k} \Bigl( g_j(x_i^k) - g_j(x_0^k) \Bigr) + \widetilde{g}_0^k \\
\widetilde{J}_i^k = \frac{1}{b} \sum_{j\in \mathcal{B}_i^k} \Bigl( g'_j(x_i^k) - g'_j(x_0^k) \Bigr) + \widetilde{J}_0^k .
\end{cases}
\end{equation}
For each $k=1,...,K$, at the start of the $k$-th epoch, this estimator now constructs $\widetilde{g}_0^k$ and $\widetilde{J}_0^k$ exactly. At the iterations with $i>0$, we generate an index set $\mathcal{A}_i^k$ of size $a$ and another index set $\mathcal{B}_i^k$ of size $b$, both by sampling with replacement from $\{1,\dots,N\}$. 
Note since $g_j(x_0^k)$ and $g'_j(x_0^k)$ are all evaluated for all $j \in \{1,\dots,N\}$, one can store these in memory for use later in the construction of $\widetilde{g}_i^k$ and $\widetilde{J}_i^k$. So at the $(k,i)$-th iteration (for $i>0$), the terms in $\{ g_j(x_0^k): j\in \mathcal{A}_i^k \}$ and $\{ g'_j(x_0^k): j\in \mathcal{B}_i^k \}$ can be simply called from the past data. Only the terms in $\{ g_j(x_i^k): j\in \mathcal{A}_i^k \}$ and $\{ g'_j(x_i^k): j\in \mathcal{B}_i^k \}$, namely those involves $x_i^k$, are needed to be evaluated. 
This estimator is parameterized by $\theta = (a,b) \in \mathbb{N}_+^2$, describing both batch sizes utilized. The set $\mathcal{C}(K, \boldsymbol{\tau}, \Delta)$, and functions $\{\gamma_\ell \}_{\ell=0}^2$, $\{\lambda_\ell \}_{\ell=0}^1$ are given below.

\begin{lemma}
\label{lemma:exact-eval-plus-standard}
{\color{blue} Suppose Assumption~\ref{assumption:uniform-Lipschitz} holds for function $g$.}
If $\mathtt{estimator}(x_i^k, i; \theta)$ is defined by (\ref{eq:exact-eval-plus-standard}), where $\theta= (a,b)$ and $\Theta= \mathbb{N}_+^2$, then Assumption \ref{assumption:general-estimation-error-bounds} holds with the following 
\begin{align*}
    &\mathcal{C}(K, \boldsymbol{\tau}, \Delta) = \left\{ (a,b)\in \mathbb{N}_+^2 : a\ge \frac{4}{9} \log \left( \frac{2(m+ 1) \Sigma_\tau}{\Delta} \right), b\ge \frac{4}{9} \log \left( \frac{2(m+ n) \Sigma_\tau}{\Delta} \right) \right\} ,\\
    &\gamma_0 = \gamma_2= \lambda_0 =0 , \quad {\color{blue} \gamma_1(K, \boldsymbol{\tau}, \theta, \Delta ) = \frac{4 \widehat{l}_g}{\sqrt{a}} \sqrt{\log \left( \frac{2(m+ 1) \Sigma_\tau}{\Delta} \right)} },\\
    &{\color{blue} \lambda_1(K, \boldsymbol{\tau}, \theta, \Delta ) = \frac{4 \widehat{L}_g}{\sqrt{b}} \sqrt{\log \left( \frac{2(m+ n) \Sigma_\tau}{\Delta} \right)} }.
\end{align*}
\end{lemma}

Just as done before, Lemma \ref{lemma:exact-eval-plus-standard} and Theorem \ref{thm:unified-thm-short} provide recommendations for $K, \boldsymbol{\tau}, \theta$ and enable analysis of resulting oracle complexities.

\begin{corollary}[Algorithmic guarantee]
\label{cor:exact-eval-plus-standard}
Consider any $\Delta \in (0,1)$, $M>5 l_f L_g$, integer $\tau>0$, and any sufficiently small $\epsilon >0$. Suppose Assumption \ref{assumption:subroutine} holds for $\mathtt{solver}$, {\color{blue} Assumption~\ref{assumption:uniform-Lipschitz} holds for function $g$,} $\inf_x \Phi(x) > -\infty$, $\mathtt{estimator}$ is defined by (\ref{eq:exact-eval-plus-standard}), and $\boldsymbol{\tau}$ is restricted to the form $\tau_0= \cdots= \tau_{K-1} = \tau$. Set $K= \lceil \frac{C_{\Sigma} \cdot \epsilon^{-1}}{ \tau} \rceil$, {\color{blue} $a= \lceil C_a \cdot \tau^2 \cdot \epsilon^{-1} \cdot \log ( \frac{4(m+ 1) K \tau}{\Delta} ) \rceil$, $b= \lceil C_b \cdot \tau^2 \cdot \log ( \frac{4(m+ n) K \tau}{\Delta} ) \rceil$}, $\overline{\delta} = \Delta/ (2K\tau)$, $\overline{\epsilon} = \epsilon/(5\cdot 30M)$, where $C_{\Sigma}, C_a, C_b$ are some constants, 
then with probability at least $1-\Delta$: (i) Algorithm $\ref{algo:general-framework}$'s iterates satisfy:
\begin{equation*}
\frac{1}{K \tau} \sum_{k=0}^{K-1} \sum_{i=0}^{\tau -1} \| \mathcal{G}_M(x_i^k) \|_2 ^2 \le \epsilon,
\end{equation*}
and (ii) the oracle complexities for evaluations and Jacobians of inner components $g_{\xi}(\cdot)$ respectively are at most 
\begin{align}
\label{eq: cor-exact-eval-plus-standard,eq-1}
&\widetilde{\Theta} \left( N+ \epsilon^{-1} \tau^3 + N\epsilon^{-1} \tau^{-1} + \epsilon^{-2} \tau^2 \right)\\
\label{eq: cor-exact-eval-plus-standard,eq-2}
\text{and}\quad &\widetilde{\Theta} \left( N+\tau^3+ N\epsilon^{-1} \tau^{-1} + \epsilon^{-1} \tau^2 \right) .
\end{align}
\end{corollary}

\begin{corollary}[Optimized complexity bounds]
\label{cor:exact-eval-plus-standard-bounds}
(i) The minimal asymptotic evaluation complexity, with respect to $\tau$, of (\ref{eq: cor-exact-eval-plus-standard,eq-1}) is $\widetilde{\Theta} \left( N+ \epsilon^{-2}+ N^{2/3} \epsilon^{-4/3} \right)$, achieved by setting $\tau = \Theta \left( \max\{ 1, N^{1/3} \epsilon^{1/3} \} \right)$. In this case, (\ref{eq: cor-exact-eval-plus-standard,eq-2}) will become $\widetilde{\Theta} \left( \min \{ N\epsilon^{-1}, N+ N^{2/3} \epsilon^{-4/3} \} \right)$.
(ii) The minimal asymptotic Jacobian complexity, with respect to $\tau$, of (\ref{eq: cor-exact-eval-plus-standard,eq-2}) is $\widetilde{\Theta} \left( N + N^{2/3} \epsilon^{-1} \right)$, achieved by setting $\tau = \Theta \left( N^{1/3} \right)$. In this case, (\ref{eq: cor-exact-eval-plus-standard,eq-1}) becomes $\widetilde{\Theta} \left( N\epsilon^{-1} + N^{2/3} \epsilon^{-2} \right)$.
\end{corollary}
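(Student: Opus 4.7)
The plan is to view each of (\ref{eq: cor-exact-eval-plus-standard,eq-1}) and (\ref{eq: cor-exact-eval-plus-standard,eq-2}) as a scalar function of $\tau$ over the admissible range $\tau \ge 1$ (with $\tau = O(\epsilon^{-1})$ from Corollary~\ref{cor:exact-eval-plus-standard}), minimize each by pairwise term-balancing (equivalently, AM--GM or a one-line derivative), and then substitute the optimal $\tau$ back into the other expression. The additive $N$ term carries through unchanged. A useful preliminary simplification is that on the admissible range $\tau = O(\epsilon^{-1})$ one has $\epsilon^{-1}\tau^3 = (\epsilon\tau)\cdot\epsilon^{-2}\tau^2 = O(\epsilon^{-2}\tau^2)$ in (\ref{eq: cor-exact-eval-plus-standard,eq-1}) and $\tau^3 = (\epsilon\tau)\cdot\epsilon^{-1}\tau^2 = O(\epsilon^{-1}\tau^2)$ in (\ref{eq: cor-exact-eval-plus-standard,eq-2}), so these cubic-in-$\tau$ contributions may be dropped during the balancing and reinstated at the end as a sanity check.

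For part (i), I would balance the remaining tradeoff $\epsilon^{-2}\tau^2 + N\epsilon^{-1}\tau^{-1}$: setting $\tau^3 = N\epsilon$ gives the unconstrained optimizer $\tau = (N\epsilon)^{1/3}$ with common value $\Theta(N^{2/3}\epsilon^{-4/3})$. Enforcing $\tau\ge 1$ yields the capped choice $\tau = \Theta(\max\{1, N^{1/3}\epsilon^{1/3}\})$. In the interior regime $N\epsilon \ge 1$, both balanced terms are $\Theta(N^{2/3}\epsilon^{-4/3})$ and the dropped term $\epsilon^{-1}\tau^3 = N$ is absorbed into the additive $N$; in the boundary regime $N\epsilon < 1$, at $\tau = 1$ the dominant term is $\epsilon^{-2}$ (since $N\epsilon^{-1} < \epsilon^{-2}$). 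A uniform upper bound covering both is $\widetilde{\Theta}(N + \epsilon^{-2} + N^{2/3}\epsilon^{-4/3})$, matching the claim. Plugging $\tau = (N\epsilon)^{1/3}$ back into (\ref{eq: cor-exact-eval-plus-standard,eq-2}) gives $\widetilde{\Theta}(N + N^{2/3}\epsilon^{-4/3})$ (the $\epsilon^{-1}\tau^2 = N^{2/3}\epsilon^{-1/3}$ term is dominated by $N^{2/3}\epsilon^{-4/3}$), while plugging $\tau = 1$ gives $\widetilde{\Theta}(N\epsilon^{-1})$; a direct comparison shows $(N\epsilon)^{1/3} \ge 1 \iff N^{2/3}\epsilon^{-4/3} \le N\epsilon^{-1}$, so the two regimes are exactly unified by writing the min $\widetilde{\Theta}(\min\{N\epsilon^{-1}, N + N^{2/3}\epsilon^{-4/3}\})$.

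For part (ii), I would analogously balance $\epsilon^{-1}\tau^2 + N\epsilon^{-1}\tau^{-1}$: setting $\tau^3 = N$ gives $\tau = \Theta(N^{1/3})$ with optimal value $\Theta(N^{2/3}\epsilon^{-1})$; the dropped $\tau^3 = N$ is already present as an additive $N$, so no new term appears. No boundary issue arises since we assume $N \to \infty$. Substituting $\tau = N^{1/3}$ into (\ref{eq: cor-exact-eval-plus-standard,eq-1}) produces $\epsilon^{-1}\tau^3 = N\epsilon^{-1}$, $N\epsilon^{-1}\tau^{-1} = N^{2/3}\epsilon^{-1}$, and $\epsilon^{-2}\tau^2 = N^{2/3}\epsilon^{-2}$, whose envelope is $\widetilde{\Theta}(N\epsilon^{-1} + N^{2/3}\epsilon^{-2})$.

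The only genuinely delicate point is the boundary case in part (i): in the joint asymptotic regime $\epsilon \to 0$, $N \to \infty$ the unconstrained minimizer $(N\epsilon)^{1/3}$ may lie below $1$, and one must treat this case separately before recombining with the interior analysis via the min-form. Apart from this, the proof reduces to direct plug-in arithmetic, with all randomness already absorbed into Corollary~\ref{cor:exact-eval-plus-standard}.
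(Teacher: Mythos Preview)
Your proposal is correct and follows essentially the same route as the paper: establish that the sum is $\Omega(N+\epsilon^{-2}+N^{2/3}\epsilon^{-4/3})$ (resp.\ $\Omega(N+N^{2/3}\epsilon^{-1})$) via the balancing inequality $\max\{N\epsilon^{-1}\tau^{-1},\epsilon^{-2}\tau^2\}\ge N^{2/3}\epsilon^{-4/3}$ (resp.\ $\max\{N\epsilon^{-1}\tau^{-1},\epsilon^{-1}\tau^2\}\ge N^{2/3}\epsilon^{-1}$), then verify achievability by plugging in the two regimes $N\epsilon\gtrless 1$. One small inaccuracy: Corollary~\ref{cor:exact-eval-plus-standard} does not actually carry the hypothesis $\tau=O(\epsilon^{-1})$ (that proviso appears in Corollaries~\ref{cor:sample-mean-plus-standard} and~\ref{cor:sample-mean-plus-smart}, not here), so your ``preliminary simplification'' dropping $\epsilon^{-1}\tau^3$ is not justified by citation; it is harmless, however, since the candidate minimizers you identify do lie in this range and you reinstate the term when checking achievability.
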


\noindent {\bf Application of Modified Variance Reduction for Finite Averages.}
Similarly, we can also incorporate exact evaluation into $\mathtt{estimator}_2$. The modified estimator $\mathtt{estimator}_4 (x_i^k, i; \theta)$ is defined in two cases, $i=0$ and $i>0$, as
\begin{equation}
\label{eq:exact-eval-plus-smart}
\tag{Est\textsubscript{4}}
\begin{cases}
\widetilde{g}_0^k = \frac{1}{N} \sum_{j=1}^N g_j(x_0^k) = g(x_0^k) \\
\widetilde{J}_0^k = \frac{1}{N} \sum_{j=1}^N g'_j(x_0^k) = g'(x_0^k) \\
\widetilde{g}_i^k = \frac{1}{a} \sum_{j\in \mathcal{A}_i^k} \Bigl( g_j(x_i^k) - g_j(x_0^k) - g'_j(x_0^k) (x_i^k-x_0^k) \Bigr) + \widetilde{g}_0^k + \widetilde{J}_0^k (x_i^k-x_0^k) \\
\widetilde{J}_i^k = \frac{1}{b} \sum_{j\in \mathcal{B}_i^k} \Bigl( g'_j(x_i^k) - g'_j(x_0^k) \Bigr) + \widetilde{J}_0^k .
\end{cases}
\end{equation}
This estimator is the natural generalization of~\eqref{eq:sample-mean-plus-smart} to utilize exact computations at the start of each epoch.
For $\mathtt{estimator}_4$, we have the following sequence of results, including choices of $\mathcal{C}(K, \boldsymbol{\tau}, \Delta)$, $\{\gamma_\ell \}_{\ell=0}^2$, $\{\lambda_\ell \}_{\ell=0}^1$, and algorithmic analysis.

\begin{lemma}
\label{lemma:exact-eval-plus-smart}
{\color{blue} Suppose Assumption~\ref{assumption:uniform-Lipschitz} holds.} If $\mathtt{estimator}(x_i^k, i; \theta)$ is defined by (\ref{eq:exact-eval-plus-smart}), where $\theta= (a,b)$ and $\Theta= \mathbb{N}_+^2$, then Assumption \ref{assumption:general-estimation-error-bounds} holds with the following 
\begin{align*}
    &\mathcal{C}(K, \boldsymbol{\tau}, \Delta) = \left\{ (a,b)\in \mathbb{N}_+^2 : a\ge \frac{4}{9} \log \left( \frac{2(m+ 1) \Sigma_\tau}{\Delta} \right), b\ge \frac{4}{9} \log \left( \frac{2(m+ n) \Sigma_\tau}{\Delta} \right) \right\} ,\\
    &\gamma_0 = \gamma_1= \lambda_0 =0 ,
    \quad {\color{blue} \gamma_2(K, \boldsymbol{\tau}, \theta, \Delta ) = \frac{2 \widehat{L}_g}{\sqrt{a}} \sqrt{\log \left( \frac{2(m+ 1) \Sigma_\tau}{\Delta} \right)} },\\
    &{\color{blue} \lambda_1(K, \boldsymbol{\tau}, \theta, \Delta ) = \frac{4 \widehat{L}_g}{\sqrt{b}} \sqrt{\log \left( \frac{2(m+ n) \Sigma_\tau}{\Delta} \right)} }.
\end{align*}
\end{lemma}

\begin{corollary}[Algorithmic guarantee]
\label{cor:exact-eval-plus-smart}
Consider any $\Delta \in (0,1)$, $M>5 l_f L_g$, integer $\tau>0$, and any sufficiently small $\epsilon >0$. Suppose Assumption \ref{assumption:subroutine} holds for $\mathtt{solver}$, {\color{blue} Assumption~\ref{assumption:uniform-Lipschitz} holds for function $g$,} $\inf_x \Phi(x) > -\infty$, $\mathtt{estimator}$ is defined by (\ref{eq:exact-eval-plus-smart}), and $\boldsymbol{\tau}$ is restricted to the form $\tau_0= \cdots= \tau_{K-1} = \tau$. Set $K= \lceil \frac{C_{\Sigma} \cdot \epsilon^{-1}}{ \tau} \rceil$, {\color{blue} $a= \lceil C_a \cdot \tau^4 \cdot \log ( \frac{4(m+ 1) K \tau}{\Delta} ) \rceil$, $b= \lceil C_b \cdot \tau^2 \cdot \log ( \frac{4(m+ n) K \tau}{\Delta} ) \rceil$}, $\overline{\delta} = \Delta/ (2K\tau)$, $\overline{\epsilon} = \epsilon/(5\cdot 30M)$, where $C_{\Sigma}, C_a, C_b$ are some constants, 
then with probability at least $1-\Delta$: (i) Algorithm $\ref{algo:general-framework}$'s iterates satisfy:
\begin{equation*}
\frac{1}{K \tau} \sum_{k=0}^{K-1} \sum_{i=0}^{\tau -1} \| \mathcal{G}_M(x_i^k) \|_2 ^2 \le \epsilon,
\end{equation*}
and (ii) the oracle complexities for evaluations and Jacobians of inner components $g_{\xi}(\cdot)$ respectively are at most 
\begin{align}
\label{eq: cor-exact-eval-plus-smart,eq-1}
&\widetilde{\Theta} \left( N+\tau^5 + N\epsilon^{-1} \tau^{-1} + \epsilon^{-1} \tau^4 \right)\\
\label{eq: cor-exact-eval-plus-smart,eq-2}
\text{and}\quad &\widetilde{\Theta} \left( N+\tau^3+ N\epsilon^{-1} \tau^{-1} + \epsilon^{-1} \tau^2 \right) .
\end{align}
\end{corollary}
\begin{corollary}[Optimized complexity bounds]
\label{cor:exact-eval-plus-smart-bounds}
(i) The minimal asymptotic evaluation complexity, with respect to $\tau$, of (\ref{eq: cor-exact-eval-plus-smart,eq-1}) is $\widetilde{\Theta} \left( N + N^{4/5} \epsilon^{-1} \right)$, achieved by setting $\tau = \Theta \left(N^{1/5} \right)$. In this case, (\ref{eq: cor-exact-eval-plus-smart,eq-2}) is also $\widetilde{\Theta} \left( N + N^{4/5} \epsilon^{-1} \right)$. 
(ii) The minimal asymptotic Jacobian complexity, with respect to $\tau$, of (\ref{eq: cor-exact-eval-plus-smart,eq-2}) is $\widetilde{\Theta} \left( N + N^{2/3} \epsilon^{-1} \right)$, achieved by setting $\tau = \Theta \left( N^{1/3} \right)$. In this case, (\ref{eq: cor-exact-eval-plus-smart,eq-1}) becomes $\widetilde{\Theta} \left( N^{5/3} + N^{4/3} \epsilon^{-1} \right)$.
\end{corollary}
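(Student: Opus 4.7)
The proof is a pure optimization-over-$\tau$ exercise applied to the two cost expressions in Corollary~\ref{cor:exact-eval-plus-smart}, so the plan is to identify the dominant increasing and decreasing terms in $\tau$, balance them, and verify that the other terms contribute no larger order at the balancing point. Throughout, I will treat $N$ as large, $\epsilon$ as small, and use that the minimum of a sum of a polynomially growing and polynomially decaying function of $\tau$ is, up to constants, the value where the two are equal.

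For part (i), I start from $F_{\mathrm{eval}}(\tau)=N+\tau^5+N\epsilon^{-1}\tau^{-1}+\epsilon^{-1}\tau^4$. The only decreasing term is $N\epsilon^{-1}\tau^{-1}$, and the two relevant increasing candidates to balance it against are $\tau^5$ and $\epsilon^{-1}\tau^4$. Since we expect $\tau\ll\epsilon^{-1}$, the term $\epsilon^{-1}\tau^4$ dominates $\tau^5$, so the governing balance is $\epsilon^{-1}\tau^4\asymp N\epsilon^{-1}\tau^{-1}$, which yields $\tau^5\asymp N$ and hence $\tau=\Theta(N^{1/5})$. Plugging this back, each of $\tau^5$, $\epsilon^{-1}\tau^4$, and $N\epsilon^{-1}\tau^{-1}$ evaluates to $N$, $N^{4/5}\epsilon^{-1}$, and $N^{4/5}\epsilon^{-1}$ respectively, giving $F_{\mathrm{eval}}=\widetilde{\Theta}(N+N^{4/5}\epsilon^{-1})$. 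The same substitution into $F_{\mathrm{jac}}(\tau)=N+\tau^3+N\epsilon^{-1}\tau^{-1}+\epsilon^{-1}\tau^2$ produces $N+N^{3/5}+N^{4/5}\epsilon^{-1}+N^{2/5}\epsilon^{-1}=\widetilde{\Theta}(N+N^{4/5}\epsilon^{-1})$, completing the conclusion that the Jacobian cost at this $\tau$ matches.

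For part (ii), I do the analogous balance for $F_{\mathrm{jac}}$. The decreasing term is again $N\epsilon^{-1}\tau^{-1}$, and the competing increasing terms are $\tau^3$ and $\epsilon^{-1}\tau^2$; in the regime $\tau\ll\epsilon^{-1}$, the term $\epsilon^{-1}\tau^2$ dominates $\tau^3$, so I equate $\epsilon^{-1}\tau^2\asymp N\epsilon^{-1}\tau^{-1}$ to obtain $\tau^3\asymp N$, i.e., $\tau=\Theta(N^{1/3})$. Substituting back makes both $\epsilon^{-1}\tau^2$ and $N\epsilon^{-1}\tau^{-1}$ equal to $N^{2/3}\epsilon^{-1}$ and $\tau^3$ equal to $N$, so $F_{\mathrm{jac}}=\widetilde{\Theta}(N+N^{2/3}\epsilon^{-1})$. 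Substituting $\tau=\Theta(N^{1/3})$ into $F_{\mathrm{eval}}$ then gives $N+N^{5/3}+N^{2/3}\epsilon^{-1}+N^{4/3}\epsilon^{-1}=\widetilde{\Theta}(N^{5/3}+N^{4/3}\epsilon^{-1})$, matching the stated expression.

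The only mildly subtle point — and the closest thing to an obstacle — is justifying that the balance I chose is globally optimal rather than merely a local equation, i.e., that no larger or smaller $\tau$ does better. I would handle this by noting that each $F$ is, up to constants, a maximum of a nondecreasing and a nonincreasing function of $\tau$, so its minimum is achieved (up to constants) exactly at the crossover point, and verify that the neglected increasing terms ($\tau^5$ in part (i) and $\tau^3$ in part (ii)) are indeed dominated at the chosen $\tau$ precisely under the assumption $\tau\ll\epsilon^{-1}$, which follows from $\epsilon$ being sufficiently small relative to $N^{-5}$ or $N^{-3}$ respectively. This confirms that the displayed rates cannot be improved by any other choice of $\tau$ within the regime of Corollary~\ref{cor:exact-eval-plus-smart}.
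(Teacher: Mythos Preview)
Your proposal is correct and follows essentially the same approach as the paper: identify the balance between the decreasing term $N\epsilon^{-1}\tau^{-1}$ and the dominant increasing term, solve for $\tau$, and substitute back. The paper's write-up is slightly cleaner in that it directly records the $\tau$-free lower bounds $N\epsilon^{-1}\tau^{-1}+\epsilon^{-1}\tau^4\ge N^{4/5}\epsilon^{-1}$ and $N\epsilon^{-1}\tau^{-1}+\epsilon^{-1}\tau^2\ge N^{2/3}\epsilon^{-1}$ (valid for all $\tau>0$), which establishes optimality without any regime discussion; in particular, your last paragraph's worry about $\tau\ll\epsilon^{-1}$ is unnecessary, since at $\tau=\Theta(N^{1/5})$ the term $\tau^5$ is exactly $\Theta(N)$ and is already absorbed into the stated bound regardless of the relation between $N$ and $\epsilon$. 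Also note the slip there: the conditions should read $\epsilon\ll N^{-1/5}$ and $\epsilon\ll N^{-1/3}$, not $N^{-5}$ and $N^{-3}$.
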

\begin{remark} \label{remark:comparison-finite-average-methods}
Similar to Remark \ref{remark:comparison-expectation-methods}, we can compare the asymptotic rates in Corollary \ref{cor:exact-eval-plus-standard-bounds} and Corollary \ref{cor:exact-eval-plus-smart-bounds}. The Jacobian complexity parts are the same in these two Corollaries, which is not a surprise, because (\ref{eq: cor-exact-eval-plus-standard,eq-2}) and (\ref{eq: cor-exact-eval-plus-smart,eq-2}) have the same form. We focus on comparing the oracle complexity for evaluations in the two Corollaries. The optimized asymptotic evaluation complexity bound for $\mathtt{estimator}_3$ and $\mathtt{estimator}_4$ are $\widetilde{\Theta} ( N + N^{2/3} \epsilon^{-4/3} + \epsilon^{-2})$ and $\widetilde{\Theta} ( N + N^{4/5} \epsilon^{-1})$ respectively.

There are two parameters $N$ and $\epsilon$ here. Suppose $N = \Theta(\epsilon^{-p})$. Then the previous two asymptotic rates become $\widetilde{\Theta} ( \epsilon^{- p_3})$ and $\widetilde{\Theta} ( \epsilon^{- p_4})$, where $p_3 = \max \{p, \frac{2}{3}p + \frac{4}{3}, 2\}$, $p_4 = \max \{p, \frac{4}{5}p+ 1\}$. Note that
\begin{equation*}
\begin{cases}
p_3 > p_4, \text{ if } p<\frac{5}{2} \\
p_3 < p_4, \text{ if } \frac{5}{2} < p < 5 \\
p_3 = p_4, \text{ if } p=\frac{5}{2} \text{ or } p\ge 5 .
\end{cases}
\end{equation*}
So the optimized asymptotic evaluation complexity bound of $\mathtt{estimator}_3$ is strictly lower if $\frac{5}{2} < p < 5$, and $\mathtt{estimator}_4$ has the lower one if $p<\frac{5}{2}$. This dichotomy suggests neither method’s guarantee uniformly dominates the other. The best method varies depending on the relative rate between $N$ and the accuracy $\epsilon$.
\end{remark}

\subsubsection{Methods with Randomized Epoch Durations}
As a last application, we showcase an example application of Theorem~\ref{thm:unified-thm-short} with random epoch durations $\tau_k$, as were considered by prior variance-reduced works like~\cite{Johnson2013,Konecny2017}.
Consider the following general scheme to determine $K$ and $\boldsymbol{\tau}$ given some $S_{\tau}$: Sample $\tau_0, \tau_1, ...$ independently from some distribution $D_{\tau}$ belonging to a parametric distribution family $ \{ D_{\tau}(\cdot; \tau_+, \theta_{\tau} ): (\tau_+, \theta_{\tau}) \in \mathbb{N}_+ \times \Theta_{\tau} \}$. Then generate $K$ and $\boldsymbol{\tau}$ as
\begin{equation}
\label{eq:randomized-tau}
\begin{cases}
K &\gets \inf\{ N: \sum_{k=0}^{N-1} \tau_k \ge S_{\tau} \}\\
\boldsymbol{\tau} &\gets (\tau_1, ..., \tau_K) . 
\end{cases}
\end{equation}
In \eqref{eq:randomized-tau}, if $S_\tau$ is much larger than each $\tau_k$, then $\sum_{k=0}^{K-1} \tau_k$ will be approximately equal to $S_\tau$. To make this relationship rigorous, we assume the following pair of conditions on the parametric family of generating distribution where the integer parameter $\tau_+$ provides a bound on the size of each $\tau_k$ and control via $C_\tau$ of its expected value.
\begin{assumption}
\label{assumption:varying-tau}
(i) The support of $D_{\tau}(\cdot; \tau_+, \theta_{\tau} )$ is a subset of $\{1,..., \tau_+\}$ for any $(\tau_+, \theta_{\tau}) \in \mathbb{N}_+ \times \Theta_{\tau}$. (ii) There exist a constant $C_{\tau}$, such that $C_{\tau} \mathbb{E}_{\tau \sim D_{\tau}(\cdot; \tau_+, \theta_{\tau} )} [\tau] \ge \tau_+$ for any $(\tau_+, \theta_{\tau}) \in \mathbb{N}_+ \times \Theta_{\tau}$.
\end{assumption}
The intuition behind Assumption \ref{assumption:varying-tau}(ii) is trying to connect this scheme of varying $\tau_k$ with our previous theory of fixed $\tau_k$. Consider a degenerated distribution $\widetilde{D}_{\tau_+}$ where $\tau \equiv \tau_+$, then Assumption \ref{assumption:varying-tau}(ii) controls the expectation ratio between $\widetilde{D}_{\tau_+}$ and $D_{\tau}(\cdot; \tau_+, \theta_{\tau} )$ by a constant upper bound $C_{\tau}$. This intuitively suggests that if we replace $D_{\tau}(\cdot; \tau_+, \theta_{\tau} )$ by $\widetilde{D}_{\tau_+}$ in \eqref{eq:randomized-tau} while keeping $S_\tau$ unchanged, the returned $K$ will increase at most by some constant factor. Indeed, we can prove this holds with high probability, which leads to the following result.

\begin{corollary}[Algorithmic guarantee for the scheme of varying $\tau$]
\label{cor:sample-mean-plus-standard-varying-tau}
Consider any $\Delta \in (0,1)$, $M>5 l_f L_g$ and any sufficiently small $\epsilon >0$. 
Suppose $\mathtt{estimator}$ is defined by (\ref{eq:sample-mean-plus-standard}), $(K, \boldsymbol{\tau})$ is generated by \eqref{eq:randomized-tau}, Assumption \ref{assumption:subroutine} holds for $\mathtt{solver}$, {\color{blue} Assumptions~\ref{assumption:variance-like},~\ref{assumption:uniform-Lipschitz} hold} for function $g$, Assumption \ref{assumption:varying-tau} holds for the generating distribution, and $\inf_x \Phi(x) > -\infty$. 
{\color{blue} For some constants $C_{\Sigma}, C_A, C_B, C_a, C_b, C_p$,} set $\tau_+ = \lceil \epsilon^{-1/3} \rceil$, $S_{\tau} = \lceil C_{\Sigma} \cdot \epsilon^{-1} \rceil$, $A = \lceil C_A \cdot \epsilon^{-2} \cdot \log ( \frac{5(m+ 1) S_\tau }{\Delta} ) \rceil$, $B = \lceil C_B \cdot \epsilon^{-1} \cdot \log ( \frac{5(m+ n) S_\tau }{\Delta} ) \rceil$, {\color{blue} $a = \lceil C_a \cdot \tau_+^2 \cdot \epsilon^{-1} \cdot \log ( \frac{5(m+ 1) S_\tau }{\Delta} ) \rceil$, $b = \lceil C_b \cdot \tau_+^2 \cdot \log ( \frac{5(m+ n) S_\tau }{\Delta} ) \rceil$}, $\overline{\delta} = \frac{\Delta }{2( S_{\tau}+ \tau_+) }$, $\overline{\epsilon} = \epsilon/(5\cdot 30M)$, 
then with probability at least $1-\Delta - \exp( -C_p \epsilon^{-2/3} )$: 
(i) Algorithm~\ref{algo:general-framework}'s iterates satisfy:
\begin{equation*}
\frac{1}{ \Sigma_\tau } \sum_{k=0}^{K-1} \sum_{i=0}^{\tau_k -1} \| \mathcal{G}_M(x_i^k) \|_2 ^2 \le \epsilon,
\end{equation*}
and (ii) the oracle complexity for evaluations and Jacobians of inner components $g_{\xi}(\cdot)$ are at most $\widetilde{\Theta} (\epsilon^{-8/3} \log(1/\Delta) )$ and $\widetilde{\Theta} (\epsilon^{-5/3} \log(1/\Delta) )$ respectively.
\end{corollary}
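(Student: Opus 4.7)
The plan is to reduce to a Corollary~\ref{cor:sample-mean-plus-standard}-style analysis after conditioning on a high-probability event that controls the random outer-loop count $K$. Since Theorem~\ref{thm:unified-thm-short} takes $(K,\boldsymbol{\tau})$ as fixed inputs, the new ingredient beyond the fixed-$\tau$ case is a concentration argument replacing the random $K$ by a deterministic upper bound inside all log factors and union-bound slots.

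First, I would show $K \le K_{\max} := \lceil 2 C_\tau S_\tau / \tau_+ \rceil = \Theta(\epsilon^{-2/3})$ with probability at least $1 - \exp(-C_p \epsilon^{-2/3})$. By Assumption~\ref{assumption:varying-tau}, the $\tau_k$ are i.i.d.\ in $\{1,\dots,\tau_+\}$ with mean at least $\tau_+/C_\tau$, so Hoeffding applied to $\sum_{k=1}^{K_{\max}} \tau_k$ gives that this sum exceeds $S_\tau$ with the stated probability, forcing the stopping rule \eqref{eq:randomized-tau} to halt by step $K_{\max}$. Moreover, $\Sigma_\tau \in [S_\tau, S_\tau + \tau_+]$ holds deterministically since the $K$-th summand contributes at most $\tau_+$.

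Next, conditional on $\{K \le K_{\max}\}$, I would invoke Theorem~\ref{thm:unified-thm-short} with $\tau_{\max} \le \tau_+$ and $\Sigma_\tau \le S_\tau + \tau_+$ substituted into Lemma~\ref{lemma:sample-mean-plus-standard}. Verification of \eqref{eq: thm-unified-thm-short,eq-1}--\eqref{eq: thm-unified-thm-short,eq-9} is then essentially the fixed-$\tau$ argument of Corollary~\ref{cor:sample-mean-plus-standard} with $\tau$ replaced by $\tau_+$ and $\Sigma_\tau$ replaced by $S_\tau + \tau_+$ inside log factors; the listed $A,B,a,b$ have matching scaling and discharge all nine conditions. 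Setting $\overline{\delta} = \Delta/(2(S_\tau+\tau_+))$ handles \eqref{eq: thm-unified-thm-short,eq-2} uniformly over all $\Sigma_\tau$ inner iterations. A union bound over the Hoeffding event, the Assumption~\ref{assumption:general-estimation-error-bounds} estimator event at level $\Delta/2$, and the aggregated solver failures then yields total failure probability $\Delta + \exp(-C_p \epsilon^{-2/3})$.

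For complexity, on the good event the evaluation count is at most $K_{\max} A + \Sigma_\tau\cdot a$ and the Jacobian count is at most $K_{\max} B + \Sigma_\tau\cdot b$. Substituting $K_{\max}=\Theta(\epsilon^{-2/3})$, $\Sigma_\tau=\Theta(\epsilon^{-1})$, $A=\widetilde{\Theta}(\epsilon^{-2})$, $B=\widetilde{\Theta}(\epsilon^{-1})$, $a=\widetilde{\Theta}(\epsilon^{-5/3})$, $b=\widetilde{\Theta}(\epsilon^{-2/3})$ gives $\widetilde{\Theta}(\epsilon^{-8/3})$ and $\widetilde{\Theta}(\epsilon^{-5/3})$ respectively, matching Corollary~\ref{cor:sample-mean-plus-standard-bounds}'s fixed-$\tau$ optimum. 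The main obstacle is precisely the random $K$: the quantity $K$ enters Theorem~\ref{thm:unified-thm-short} both explicitly and inside the log factors of $\{\gamma_l,\lambda_l\}$, so the deterministic substitution $K\to K_{\max}$ must be applied consistently in every such slot, and the Hoeffding concentration must be tight enough that this substitution is valid with the advertised $\exp(-C_p\epsilon^{-2/3})$ failure probability. Everything else is bookkeeping inherited from the fixed-$\tau$ proof.
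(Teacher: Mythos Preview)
Your proposal is correct and follows essentially the same route as the paper: bound $\Sigma_\tau$ deterministically by $S_\tau+\tau_+$, bound $\tau_{\max}$ by $\tau_+$, apply Theorem~\ref{thm:unified-thm-short} via Lemma~\ref{lemma:sample-mean-plus-standard} to get part~(i), use Hoeffding on the i.i.d.\ $\tau_k$ to cap $K$ by $\lceil 2C_\tau S_\tau/\tau_+\rceil$ with failure probability $\exp(-C_p\epsilon^{-2/3})$, and read off the complexities from $KA+\Sigma_\tau a$ and $KB+\Sigma_\tau b$.

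One small clarification: you frame part~(i) as being proved \emph{conditional on} $\{K\le K_{\max}\}$, and you flag ``the deterministic substitution $K\to K_{\max}$ must be applied consistently in every such slot'' as the main obstacle. In fact the paper does not need the Hoeffding event for part~(i) at all. The functions $\gamma_l,\lambda_l$ from Lemma~\ref{lemma:sample-mean-plus-standard} depend on $(K,\boldsymbol{\tau})$ only through $\Sigma_\tau$, and $\Sigma_\tau\le S_\tau+\tau_+$ holds for \emph{every} realization of the stopping rule~\eqref{eq:randomized-tau}; likewise $\tau_{\max}\le\tau_+$ by Assumption~\ref{assumption:varying-tau}(i). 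So conditions \eqref{eq: thm-unified-thm-short,eq-1}--\eqref{eq: thm-unified-thm-short,eq-9} hold pathwise, and Theorem~\ref{thm:unified-thm-short} already gives part~(i) with probability $\ge 1-\Delta$ unconditionally. The Hoeffding bound on $K$ is needed \emph{only} for part~(ii), because the oracle cost contains the factor $K\cdot A$ (resp.\ $K\cdot B$) from the large-batch evaluations at each epoch start. Organizing it your way still works, but it slightly obscures which half of the corollary actually consumes the $\exp(-C_p\epsilon^{-2/3})$ budget.
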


Note the two complexities here, $\widetilde{\Theta} (\epsilon^{-8/3} \log(1/\Delta))$ and $\widetilde{\Theta} (\epsilon^{-5/3} \log(1/\Delta))$, match the bounds in Corollary \ref{cor:sample-mean-plus-standard-bounds}. Note the probability bound of $1-\Delta - \exp(-C_p \epsilon^{-2/3})$ slightly differs from the $1-\Delta$ in Corollary \ref{cor:sample-mean-plus-standard}. So Corollary \ref{cor:sample-mean-plus-standard-varying-tau} recovers the oracle complexities of fixed epoch duration setting, despite an exponentially small setback in probability guarantee.
{\color{blue} The complexity recovery is mainly because of the constant factor setting, as we remarked after Assumption~\ref{assumption:varying-tau}. We leave the open question of whether randomized (or more interestingly, adaptive) schemes for epoch duration can strictly improve algorithmic guarantees.}
We used $\mathtt{estimator}_1$ as an example to illustrate the idea of randomizing epoch duration. Similar scheme can also be applied to all of the other estimators discussed in previous sections.

\subsection{On the Computational Costs of Solver Subroutines} \label{subsec:inexact-accounting}
To provide a complete accounting for the computational cost of a variance reduced method, one ought to additionally consider the cost of (inexactly) computing proximal steps, i.e., evaluating $\mathtt{solver}$. A prox-linear step is required at every iteration of Algorithm~\ref{algo:general-framework}. Hence by Theorem~\ref{thm:unified-thm-short}, $\Sigma_\tau = O(1/\epsilon)$ (inexact) 
{\color{blue} solver calls} are needed. 

For example, if $f$ is sufficiently simple, one may be able to exactly minimize $s_i^k$, setting $\mathtt{solver}(s,\bar\epsilon,\bar\delta) = \argmin s_i^k$. For example, the subproblem for nonlinear regression problems with $f(z) = \|z\|^2_2$ is least squares minimization, which can be solved exactly as a linear system. Alternatively, if $f(z)=\max_{j=1\dots m} z_j$ {\color{blue} and $h$ is quadratic}, then 
{\color{blue} $s_i^k$} is a quadratic program of dimension {\color{blue} $n$}. 
Hence, the total cost of Algorithm~\ref{algo:general-framework}'s proximal solves is $O(1/\epsilon)$ (inexact) linear system or quadratic program solves, respectively.
As a second example, if $f$ has uniformly $L_{f}$-Lipschitz gradient, a linearly convergent (accelerated) gradient method can be applied to each strongly convex proximal subproblem $s_i^k$. The resulting total number of gradient oracle calls  to $f$ is then
$ O\left(\frac{1}{\epsilon}\log(1/\epsilon)\right)$.

As a more interesting example, consider a doubly stochastic composite problem
$$ \min_{x} \mathbb{E}_{\zeta} f_{\zeta}(\mathbb{E}_{\xi} g_\xi(x)) + h(x) . $$
Given only samples of $\zeta$ and $\xi$, one cannot directly construct unbiased estimators of subgradients of $\mathbb{E}_{\zeta} f_{\zeta}(\mathbb{E}_{\xi} g_\xi(x))$, preventing the application of many direct stochastic first-order methods, see~\cite{Liu2022,Wang2017}.
Regardless, if each $f_\zeta$ is uniformly $l_f$-Lipschitz, a stochastic proximal subgradient method can be applied to minimize the subproblem $s_i^k$. After $O\left(1/\bar \epsilon\right)$ steps, an $\bar\epsilon$-minimizer can be guaranteed~\cite{grimmer2024primaldual}. Hence the total number of subgradient oracle calls needed to $f$ at most
$ O\left(1/\epsilon^2\right). $ Noting we measure stationarity by the gradient norm {\it squared}, this agrees with the subgradient method's nonsmooth, nonconvex $O(1/\epsilon^4)$ rate~\cite{Davis2018StochasticMM} when unbiased subgradients are available. 

\section{Analysis} \label{Sect:Analysis}
Recall that the objective function is $\Phi(x) = f(g(x)) + h(x)$. The standard prox-linear method may consider a linearized proximal subproblem with the following objective function at each iteration:
$$l_i^k(x):= f\Bigl( g(x_i^k) + g'(x_i^k) (x- x_i^k)\Bigr) + h(x) + \frac{M}{2} \|x- x_i^k\|_2^2 .$$
In our algorithm, we replace $g(x_i^k)$ and $g'(x_i^k)$ with stochastic estimates $\widetilde{g}_i^k$ and $\widetilde{J}_i^k$, resulting in the following stochastic linearized objective function at each iteration:
$$s_i^k(x):= f\Bigl( \widetilde{g}_i^k + \widetilde{J}_i^k (x- x_i^k)\Bigr) + h(x) + \frac{M}{2} \|x- x_i^k\|_2^2 .$$
Since $l_i^k(x)$ and $s_i^k(x)$ are $M$-strongly convex, they have unique minimizers, denoted 
$$\widehat{x}_{i+1}^k := \arg\min l_i^k(x) ,
\qquad \text{and}\qquad \widetilde{x}_{i+1}^k := \arg\min s_i^k(x) .$$
Noting $l_i^k(x)$ is the objective function of prox-linear step in (\ref{eq:prox-linear}), by the definition in (\ref{eq:GeneralizedGradient}), our measure of stationarity at the iterate $x_i^k$ is $\|\mathcal{G}_M (x_i^k)\|_2 = M \|x_i^k - \widehat{x}_{i+1}^k \|_2$.

\subsection{Proofs for our Main Unified Convergence Theorem}
In this part, we will prove a sequence of lemmas leading to the unified theory in Theorem \ref{thm:unified-thm}. Our main result, Theorem \ref{thm:unified-thm-short}, is a consequence of Theorem \ref{thm:unified-thm}. Here we give an overview of our analysis.

We first prove three lemmas only depending on the basic setting of prox-linear methods, without the specific assumptions for $\mathtt{estimator}$ or $\mathtt{solver}$. Lemma \ref{lemma:approximation-step-in-f} is a useful result that upper bounds the prox-linear error. The upper bound involves the estimation error terms $\|\widetilde{g}_i^k - g(x_i^k)\|_2$ and $\|\widetilde{J}_i^k - g'(x_i^k) \|_{\rm{op}}$. Lemma \ref{lemma:distance-involves-x-hat} provides a one-step property for $x_i^k$, $\widehat{x}_{i+1}^k$ and $\widetilde{x}_{i+1}^k$. In particular, it upper bounds the distance $\|\widehat{x}_{i+1}^k - x_i^k\|_2$ by $\|\widetilde{x}_{i+1}^k - x_i^k\|_2$ and the estimation error terms. Lemma \ref{lemma:descent-property-for-x-tilde} provides a descent property for the objective function $\Phi$, though only between $\Phi( \widetilde{x}_{i+1}^k )$ and $\Phi(x_i^k)$.

To apply this inductively, we require a descent between $\Phi( x_{i+1}^k )$ and $\Phi(x_i^k)$. Assumption \ref{assumption:subroutine} for $\mathtt{solver}$ enables us to relate $x_{i+1}^k$ to $\widetilde{x}_{i+1}^k$ with high probability. Lemma \ref{lemma:descent-property-for-x} uses this to give such a descent property. Lemma \ref{lemma:one-step-analysis-with-error} ultimate combines our results to give an upper bound for $\|\widehat{x}_{i+1}^k - x_i^k\|_2$, which is proportional to $\| \mathcal{G}_M (x_i^k) \|_2$. 
Assumption \ref{assumption:general-estimation-error-bounds} for $\mathtt{estimator}$ then allows us to uniformly bound error terms with high probability, formalized in Lemma \ref{lemma:general-lemma-one-step}. Applying a careful induction with the upper bound in Lemma \ref{lemma:general-lemma-one-step} to cancel accumulated terms $\|x_i^k - x_0^k\|_2$ and $\|x_{i+1}^k - x_i^k\|_2$ suffices to give our ultimate result in Theorem \ref{thm:unified-thm}, an upper bound for $\frac{1}{\Sigma_\tau} \sum_{k=0}^{K-1} \sum_{i=0}^{\tau_k -1} \| \mathcal{G}_M(x_i^k) \|_2 ^2$.

\begin{lemma}
\label{lemma:approximation-step-in-f}
For any $(k,i)\in \mathcal{I}(K, \boldsymbol{\tau} )$, the following holds for any $x$:
\begin{equation*}
\begin{split}
& \left| f\Bigl(\widetilde{g}_i^k + \widetilde{J}_i^k (x- x_i^k) \Bigr) - f\Bigl(g(x_i^k)+ g'(x_i^k)(x- x_i^k) \Bigr) \right| \\
&\le l_f \|\widetilde{g}_i^k - g(x_i^k)\|_2 + \frac{l_f}{2 L_g} \|\widetilde{J}_i^k - g'(x_i^k) \|_{\rm{op}}^2 + \frac{l_f L_g}{2} \|x- x_i^k\|_2^2.
\end{split}
\end{equation*}
\end{lemma}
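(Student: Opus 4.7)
The plan is to bound the argument difference in norm and use the Lipschitz property of $f$. Since $f$ is convex and $l_f$-Lipschitz, the left-hand side is at most
\[
l_f \left\| \Bigl(\widetilde{g}_i^k + \widetilde{J}_i^k (x- x_i^k) \Bigr) - \Bigl(g(x_i^k)+ g'(x_i^k)(x- x_i^k) \Bigr) \right\|_2 .
\]

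Next I would apply the triangle inequality to split the argument into the vector term and the Jacobian term, yielding the bound
\[
l_f \|\widetilde{g}_i^k - g(x_i^k)\|_2 + l_f \left\|\bigl(\widetilde{J}_i^k - g'(x_i^k)\bigr)(x-x_i^k)\right\|_2.
\]
The second term is at most $l_f \|\widetilde{J}_i^k - g'(x_i^k)\|_{\rm op} \|x-x_i^k\|_2$ by the definition of the operator norm.

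Finally, I would use the Young/AM-GM inequality $ab \le \tfrac{a^2}{2c} + \tfrac{c b^2}{2}$ with $c = L_g$, $a=\|\widetilde{J}_i^k - g'(x_i^k)\|_{\rm op}$, and $b=\|x-x_i^k\|_2$ to split the product into the two quadratic pieces $\tfrac{1}{2L_g}\|\widetilde{J}_i^k - g'(x_i^k)\|_{\rm op}^2 + \tfrac{L_g}{2}\|x-x_i^k\|_2^2$. Multiplying by $l_f$ yields exactly the claimed right-hand side. There is no real obstacle here; the only design choice is the weight $c=L_g$ in the AM-GM step, which is chosen so that the resulting $\tfrac{l_f L_g}{2}\|x-x_i^k\|_2^2$ term matches the quadratic estimate from Proposition~\ref{prop:linear-approx-in-f}, making this lemma compose cleanly with the true linearization error later in the analysis.
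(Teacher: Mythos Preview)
Your proposal is correct and follows essentially the same approach as the paper's own proof: Lipschitz continuity of $f$, triangle inequality, operator-norm bound, then the AM--GM/Young inequality with weight $L_g$. The only superfluous detail is that convexity of $f$ plays no role in this particular step; Lipschitz continuity alone suffices.
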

\begin{proof}[Proof of Lemma \ref{lemma:approximation-step-in-f}]
Applying in order the Lipschitz continuity of $f$, triangle inequality, operator norm definition, and bounding $a\cdot b$ by $\frac{1}{2 L_g} a^2 + \frac{L_g}{2} b^2$ yields
\begin{equation*}
\begin{split}
& \left| f\Bigl(\widetilde{g}_i^k + \widetilde{J}_i^k (x- x_i^k) \Bigr) - f\Bigl(g(x_i^k)+ g'(x_i^k)(x- x_i^k) \Bigr) \right| \\
&\le l_f \| \widetilde{g}_i^k + \widetilde{J}_i^k (x- x_i^k) - g(x_i^k) - g'(x_i^k)(x- x_i^k) \|_2 \\
&\le l_f \|\widetilde{g}_i^k - g(x_i^k)\|_2 + l_f \| \widetilde{J}_i^k (x- x_i^k) - g'(x_i^k)(x- x_i^k) \|_2 \\
&\le l_f \|\widetilde{g}_i^k - g(x_i^k)\|_2 + l_f \|\widetilde{J}_i^k - g'(x_i^k) \|_{\rm{op}} \cdot \|x- x_i^k\|_2 \\
&\le l_f \|\widetilde{g}_i^k - g(x_i^k)\|_2 + \frac{l_f}{2 L_g} \|\widetilde{J}_i^k - g'(x_i^k) \|_{\rm{op}}^2 + \frac{l_f L_g}{2} \|x- x_i^k\|_2^2 . 
\end{split}
\end{equation*}
\end{proof}

\begin{lemma}
\label{lemma:distance-involves-x-hat}
For any $(k,i)\in \mathcal{I}(K, \boldsymbol{\tau} )$,
\begin{equation*}
\left(\frac{M}{2} - \frac{l_f L_g}{2} \right) \|\widehat{x}_{i+1}^k - x_i^k\|_2^2 \le 2 l_f \|\widetilde{g}_i^k - g(x_i^k)\|_2 + \frac{l_f}{L_g} \|\widetilde{J}_i^k - g'(x_i^k) \|_{\rm{op}}^2 + \left(M+ \frac{l_f L_g}{2}\right) \|\widetilde{x}_{i+1}^k- x_i^k\|_2^2 .
\end{equation*}
\end{lemma}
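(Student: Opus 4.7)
The plan is to pivot on the distance between the two minimizers $\widehat{x}_{i+1}^k$ and $\widetilde{x}_{i+1}^k$, controlling it via two strong-convexity inequalities and then converting the bound on $\|\widehat{x}_{i+1}^k-x_i^k\|_2$ via a triangle inequality. Concretely, since $l_i^k$ is $M$-strongly convex with minimizer $\widehat{x}_{i+1}^k$, one has $l_i^k(\widetilde{x}_{i+1}^k) - l_i^k(\widehat{x}_{i+1}^k) \ge \frac{M}{2}\|\widetilde{x}_{i+1}^k - \widehat{x}_{i+1}^k\|_2^2$; symmetrically $s_i^k(\widehat{x}_{i+1}^k) - s_i^k(\widetilde{x}_{i+1}^k) \ge \frac{M}{2}\|\widehat{x}_{i+1}^k - \widetilde{x}_{i+1}^k\|_2^2$. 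Summing these two yields
\begin{equation*}
M\|\widetilde{x}_{i+1}^k - \widehat{x}_{i+1}^k\|_2^2 \le \bigl[l_i^k(\widetilde{x}_{i+1}^k) - s_i^k(\widetilde{x}_{i+1}^k)\bigr] + \bigl[s_i^k(\widehat{x}_{i+1}^k) - l_i^k(\widehat{x}_{i+1}^k)\bigr],
\end{equation*}
so the problem reduces to controlling $|l_i^k - s_i^k|$ at the two minimizers.

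Next, I would invoke Lemma~\ref{lemma:approximation-step-in-f} separately at $x=\widetilde{x}_{i+1}^k$ and $x=\widehat{x}_{i+1}^k$ (noting $l_i^k - s_i^k$ differs only in its $f$-component). Adding the two applications gives
\begin{equation*}
M\|\widetilde{x}_{i+1}^k - \widehat{x}_{i+1}^k\|_2^2 \le 2 l_f \|\widetilde{g}_i^k - g(x_i^k)\|_2 + \frac{l_f}{L_g}\|\widetilde{J}_i^k - g'(x_i^k)\|_{\mathrm{op}}^2 + \frac{l_f L_g}{2}\bigl(\|\widetilde{x}_{i+1}^k - x_i^k\|_2^2 + \|\widehat{x}_{i+1}^k - x_i^k\|_2^2\bigr).
\end{equation*}

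Finally, I would apply the quadratic form of the triangle inequality $\|\widehat{x}_{i+1}^k - x_i^k\|_2^2 \le 2\|\widehat{x}_{i+1}^k - \widetilde{x}_{i+1}^k\|_2^2 + 2\|\widetilde{x}_{i+1}^k - x_i^k\|_2^2$, then multiply by $M/2$ and substitute the displayed bound on $M\|\widehat{x}_{i+1}^k - \widetilde{x}_{i+1}^k\|_2^2$. This produces a $\frac{l_f L_g}{2}\|\widehat{x}_{i+1}^k - x_i^k\|_2^2$ contribution on the right-hand side which absorbs into the left-hand side, yielding the coefficient $\frac{M}{2} - \frac{l_f L_g}{2}$ claimed. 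The remaining terms collect exactly into $2l_f\|\widetilde{g}_i^k - g(x_i^k)\|_2$, $\frac{l_f}{L_g}\|\widetilde{J}_i^k - g'(x_i^k)\|_{\mathrm{op}}^2$, and $(M + \frac{l_f L_g}{2})\|\widetilde{x}_{i+1}^k - x_i^k\|_2^2$.

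I do not anticipate a serious obstacle here; the only subtle point is recognizing that one should sum the two strong-convexity inequalities (rather than trying to compare $l_i^k$ values at $x_i^k$ and $\widehat{x}_{i+1}^k$ directly, which runs into the wrong direction of inequality because one cannot lower-bound $l_i^k(\widehat{x}_{i+1}^k)$ from its minimality). Once the distance between the two minimizers is bridged, everything else is a routine application of Lemma~\ref{lemma:approximation-step-in-f} and the quadratic triangle inequality, with the $\frac{l_f L_g}{2}\|\widehat{x}_{i+1}^k - x_i^k\|_2^2$ term migrating to the left side to produce the stated coefficient.
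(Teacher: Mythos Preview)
Your proposal is correct and matches the paper's proof essentially line for line: the paper also sums the two strong-convexity inequalities for $l_i^k$ and $s_i^k$, applies Lemma~\ref{lemma:approximation-step-in-f} at both minimizers, and then uses the equivalent inequality $\|\widehat{x}_{i+1}^k - \widetilde{x}_{i+1}^k\|_2^2 \ge \frac{1}{2}\|\widehat{x}_{i+1}^k - x_i^k\|_2^2 - \|\widetilde{x}_{i+1}^k - x_i^k\|_2^2$ (which is just your quadratic triangle inequality rearranged) to finish.
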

\begin{proof}[Proof of Lemma \ref{lemma:distance-involves-x-hat}]
Recall that $\widehat{x}_{i+1}^k$ and $\widetilde{x}_{i+1}^k$ are the minimizers of the $M$-strongly convex functions $l_i^k(x)$ and $s_i^k(x)$ respectively. So $l_i^k(\widetilde{x}_{i+1}^k) \ge l_i^k(\widehat{x}_{i+1}^k) + \frac{M}{2} \|\widetilde{x}_{i+1}^k - \widehat{x}_{i+1}^k\|_2^2$ and $s_i^k(\widehat{x}_{i+1}^k) \ge s_i^k(\widetilde{x}_{i+1}^k) + \frac{M}{2} \|\widehat{x}_{i+1}^k - \widetilde{x}_{i+1}^k\|_2^2$, i.e.
\begin{equation*}
\begin{split}
& f\Bigl( g(x_i^k) + g'(x_i^k) (\widetilde{x}_{i+1}^k- x_i^k)\Bigr) + h(\widetilde{x}_{i+1}^k) + \frac{M}{2} \|\widetilde{x}_{i+1}^k- x_i^k\|_2^2 \\
&\ge f\Bigl( g(x_i^k) + g'(x_i^k) (\widehat{x}_{i+1}^k- x_i^k)\Bigr) + h(\widehat{x}_{i+1}^k) + \frac{M}{2} \|\widehat{x}_{i+1}^k- x_i^k\|_2^2 + \frac{M}{2} \|\widetilde{x}_{i+1}^k - \widehat{x}_{i+1}^k\|_2^2
\end{split}
\end{equation*}
and
\begin{equation*}
\begin{split}
& f\Bigl( \widetilde{g}_i^k + \widetilde{J}_i^k (\widehat{x}_{i+1}^k- x_i^k)\Bigr) + h(\widehat{x}_{i+1}^k) + \frac{M}{2} \|\widehat{x}_{i+1}^k- x_i^k\|_2^2 \\
&\ge f\Bigl( \widetilde{g}_i^k + \widetilde{J}_i^k (\widetilde{x}_{i+1}^k- x_i^k)\Bigr) + h(\widetilde{x}_{i+1}^k) + \frac{M}{2} \|\widetilde{x}_{i+1}^k- x_i^k\|_2^2 + \frac{M}{2} \|\widehat{x}_{i+1}^k - \widetilde{x}_{i+1}^k\|_2^2.
\end{split}
\end{equation*}
Summing the two inequalities above, we get
\begin{equation}
\label{eq: proof-lemma-distance-involves-x-hat,eq-1}
\begin{split}
M \|\widehat{x}_{i+1}^k - \widetilde{x}_{i+1}^k\|_2^2 \le &\ f(g(x_i^k) + g'(x_i^k) (\widetilde{x}_{i+1}^k- x_i^k)) - f(\widetilde{g}_i^k + \widetilde{J}_i^k (\widetilde{x}_{i+1}^k- x_i^k)) \\
&+ f(\widetilde{g}_i^k + \widetilde{J}_i^k (\widehat{x}_{i+1}^k- x_i^k)) - f(g(x_i^k) + g'(x_i^k) (\widehat{x}_{i+1}^k- x_i^k)).
\end{split}
\end{equation}
Let $x = \widetilde{x}_{i+1}^k$ and $\widehat{x}_{i+1}^k$ in Lemma \ref{lemma:approximation-step-in-f} respectively, we have
\begin{equation}
\label{eq: proof-lemma-distance-involves-x-hat,eq-2}
\begin{split}
&f\Bigl( g(x_i^k) + g'(x_i^k) (\widetilde{x}_{i+1}^k- x_i^k)\Bigr) - f\Bigl( \widetilde{g}_i^k + \widetilde{J}_i^k (\widetilde{x}_{i+1}^k- x_i^k)\Bigr) \\
&\le l_f \|\widetilde{g}_i^k - g(x_i^k)\|_2 + \frac{l_f}{2 L_g} \|\widetilde{J}_i^k - g'(x_i^k) \|_{\rm{op}}^2 + \frac{l_f L_g}{2} \|\widetilde{x}_{i+1}^k- x_i^k\|_2^2 ,
\end{split}
\end{equation}
and
\begin{equation}
\label{eq: proof-lemma-distance-involves-x-hat,eq-3}
\begin{split}
&f\Bigl( \widetilde{g}_i^k + \widetilde{J}_i^k (\widehat{x}_{i+1}^k- x_i^k)\Bigr) - f\Bigl( g(x_i^k) + g'(x_i^k) (\widehat{x}_{i+1}^k- x_i^k)\Bigr) \\
&\le l_f \|\widetilde{g}_i^k - g(x_i^k)\|_2 + \frac{l_f}{2 L_g} \|\widetilde{J}_i^k - g'(x_i^k) \|_{\rm{op}}^2 + \frac{l_f L_g}{2} \|\widehat{x}_{i+1}^k- x_i^k\|_2^2 .
\end{split}
\end{equation}
Combining (\ref{eq: proof-lemma-distance-involves-x-hat,eq-1}), (\ref{eq: proof-lemma-distance-involves-x-hat,eq-2}), and (\ref{eq: proof-lemma-distance-involves-x-hat,eq-3}) yields
\begin{equation*}
M \|\widehat{x}_{i+1}^k - \widetilde{x}_{i+1}^k\|_2^2 \le 2 l_f \|\widetilde{g}_i^k - g(x_i^k)\|_2 + \frac{l_f}{L_g} \|\widetilde{J}_i^k - g'(x_i^k) \|_{\rm{op}}^2 + \frac{l_f L_g}{2} \|\widetilde{x}_{i+1}^k- x_i^k\|_2^2 + \frac{l_f L_g}{2} \|\widehat{x}_{i+1}^k- x_i^k\|_2^2.
\end{equation*}
Then noting that $\|\widehat{x}_{i+1}^k - \widetilde{x}_{i+1}^k\|_2^2 \ge -\|\widetilde{x}_{i+1}^k - x_i^k\|_2^2 + \frac{1}{2}\|\widehat{x}_{i+1}^k - x_i^k\|_2^2$ gives the claim.
\end{proof}

\begin{lemma}
\label{lemma:descent-property-for-x-tilde}
For any $(k,i)\in \mathcal{I}(K, \boldsymbol{\tau} )$,
\begin{equation*}
\Phi(\widetilde{x}_{i+1}^k) \le \Phi(x_i^k) - (M - l_f L_g) \|\widetilde{x}_{i+1}^k- x_i^k\|_2^2 + 2 l_f \|\widetilde{g}_i^k- g(x_i^k)\|_2 + \frac{l_f}{2 L_g} \|\widetilde{J}_i^k - g'(x_i^k) \|_{\rm{op}}^2.
\end{equation*}
\end{lemma}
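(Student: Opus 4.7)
The plan is to express $\Phi(\widetilde{x}_{i+1}^k) = f(g(\widetilde{x}_{i+1}^k)) + h(\widetilde{x}_{i+1}^k)$ and gradually transform the $f(g(\widetilde{x}_{i+1}^k))$ term into something involving $f(\widetilde{g}_i^k + \widetilde{J}_i^k(\widetilde{x}_{i+1}^k - x_i^k))$, so that the optimality of $\widetilde{x}_{i+1}^k$ for the strongly convex surrogate $s_i^k$ can be exploited. The error terms picked up along the way are exactly the ones appearing on the right-hand side of the claim.

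First I would invoke Proposition \ref{prop:linear-approx-in-f} with $x = \widetilde{x}_{i+1}^k$ and $y = x_i^k$ to obtain
\begin{equation*}
f(g(\widetilde{x}_{i+1}^k)) \le f\bigl(g(x_i^k) + g'(x_i^k)(\widetilde{x}_{i+1}^k - x_i^k)\bigr) + \tfrac{l_f L_g}{2}\|\widetilde{x}_{i+1}^k - x_i^k\|_2^2.
\end{equation*}
Next I would apply Lemma \ref{lemma:approximation-step-in-f} at $x = \widetilde{x}_{i+1}^k$ to swap the exact linearization for the stochastic one, absorbing an additional $l_f\|\widetilde{g}_i^k - g(x_i^k)\|_2 + \tfrac{l_f}{2L_g}\|\widetilde{J}_i^k - g'(x_i^k)\|_{\rm{op}}^2 + \tfrac{l_f L_g}{2}\|\widetilde{x}_{i+1}^k - x_i^k\|_2^2$. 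Adding $h(\widetilde{x}_{i+1}^k)$ on both sides then gives
\begin{equation*}
\Phi(\widetilde{x}_{i+1}^k) \le f\bigl(\widetilde{g}_i^k + \widetilde{J}_i^k(\widetilde{x}_{i+1}^k - x_i^k)\bigr) + h(\widetilde{x}_{i+1}^k) + l_f\|\widetilde{g}_i^k - g(x_i^k)\|_2 + \tfrac{l_f}{2L_g}\|\widetilde{J}_i^k - g'(x_i^k)\|_{\rm{op}}^2 + l_f L_g\|\widetilde{x}_{i+1}^k - x_i^k\|_2^2.
\end{equation*}

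Then I would use the $M$-strong convexity of $s_i^k$ together with the fact that $\widetilde{x}_{i+1}^k$ is its minimizer, which gives $s_i^k(\widetilde{x}_{i+1}^k) \le s_i^k(x_i^k) - \tfrac{M}{2}\|\widetilde{x}_{i+1}^k - x_i^k\|_2^2$. Since $s_i^k(x_i^k) = f(\widetilde{g}_i^k) + h(x_i^k)$, this rearranges to
\begin{equation*}
f\bigl(\widetilde{g}_i^k + \widetilde{J}_i^k(\widetilde{x}_{i+1}^k - x_i^k)\bigr) + h(\widetilde{x}_{i+1}^k) \le f(\widetilde{g}_i^k) + h(x_i^k) - M\|\widetilde{x}_{i+1}^k - x_i^k\|_2^2,
\end{equation*}
after moving the $\tfrac{M}{2}\|\widetilde{x}_{i+1}^k - x_i^k\|_2^2$ appearing in $s_i^k(\widetilde{x}_{i+1}^k)$ to the other side. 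Finally I would use the $l_f$-Lipschitz property of $f$ to write $f(\widetilde{g}_i^k) \le f(g(x_i^k)) + l_f\|\widetilde{g}_i^k - g(x_i^k)\|_2$ and substitute everything back, combining the two $l_f\|\widetilde{g}_i^k - g(x_i^k)\|_2$ contributions into $2 l_f\|\widetilde{g}_i^k - g(x_i^k)\|_2$ and combining the $-M$ and $+l_f L_g$ coefficients of $\|\widetilde{x}_{i+1}^k - x_i^k\|_2^2$ into $-(M - l_f L_g)$.

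No step is really an obstacle here since each piece is either a previously-established lemma, convexity, or Lipschitzness. The only care needed is bookkeeping of the $\|\widetilde{x}_{i+1}^k - x_i^k\|_2^2$ coefficients so that two copies of $\tfrac{l_f L_g}{2}$ from Proposition \ref{prop:linear-approx-in-f} and Lemma \ref{lemma:approximation-step-in-f} combine with the $-\tfrac{M}{2} - \tfrac{M}{2} = -M$ coming from strong convexity (one copy already built into $s_i^k$ and one from optimality) to yield exactly $-(M - l_f L_g)$.
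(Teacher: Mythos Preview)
Your proposal is correct and follows essentially the same approach as the paper: both proofs combine Proposition~\ref{prop:linear-approx-in-f}, Lemma~\ref{lemma:approximation-step-in-f}, the $M$-strong convexity of $s_i^k$ at its minimizer $\widetilde{x}_{i+1}^k$, and the $l_f$-Lipschitz bound $|f(\widetilde{g}_i^k)-f(g(x_i^k))|\le l_f\|\widetilde{g}_i^k-g(x_i^k)\|_2$, differing only in the order these ingredients are applied. Your bookkeeping of the $\|\widetilde{x}_{i+1}^k - x_i^k\|_2^2$ coefficients is exactly right.
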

\begin{proof}[Proof of Lemma \ref{lemma:descent-property-for-x-tilde}]
By strong convexity, $s_i^k( x_i^k ) \ge s_i^k( \widetilde{x}_{i+1}^k ) + \frac{M}{2} \| \widetilde{x}_{i+1}^k - x_i^k \|_2^2$, i.e.
\begin{equation}
\label{eq: proof-lemma-descent-property-for-x-tilde,eq-1}
s_i^k( x_i^k ) \ge f(\widetilde{g}_i^k + \widetilde{J}_i^k (\widetilde{x}_{i+1}^k- x_i^k)) + h(\widetilde{x}_{i+1}^k) + M \|\widetilde{x}_{i+1}^k- x_i^k\|_2^2.
\end{equation}
By Lipschitz continuity of $f$,
\begin{equation}
\label{eq: proof-lemma-descent-property-for-x-tilde,eq-2}
s_i^k(x_i^k) - \Phi(x_i^k) = f(\widetilde{g}_i^k) + h(x_i^k) - f(g(x_i^k)) - h(x_i^k) \le l_f \|\widetilde{g}_i^k- g(x_i^k)\|_2.
\end{equation}
From (\ref{eq: proof-lemma-descent-property-for-x-tilde,eq-1}) and (\ref{eq: proof-lemma-descent-property-for-x-tilde,eq-2}), we have
\begin{equation}
\label{eq: proof-lemma-descent-property-for-x-tilde,eq-3}
\Phi(x_i^k) + l_f \|\widetilde{g}_i^k- g(x_i^k)\|_2 \ge f(\widetilde{g}_i^k + \widetilde{J}_i^k (\widetilde{x}_{i+1}^k- x_i^k)) + h(\widetilde{x}_{i+1}^k) + M \|\widetilde{x}_{i+1}^k- x_i^k\|_2^2.
\end{equation}
Let $x=\widetilde{x}_{i+1}^k$ and $y=x_i^k$ in Proposition \ref{prop:linear-approx-in-f},
\begin{equation}
\label{eq: proof-lemma-descent-property-for-x-tilde,eq-4}
\Phi(\widetilde{x}_{i+1}^k)- h(\widetilde{x}_{i+1}^k) = f(g(\widetilde{x}_{i+1}^k)) \le  f\Bigl(g(x_i^k)+ g'(x_i^k) (\widetilde{x}_{i+1}^k- x_i^k) \Bigr) + \frac{l_f L_g}{2} \|\widetilde{x}_{i+1}^k- x_i^k\|_2^2.
\end{equation}
Combining (\ref{eq: proof-lemma-descent-property-for-x-tilde,eq-3}) and (\ref{eq: proof-lemma-descent-property-for-x-tilde,eq-4}), we have
\begin{equation}
\label{eq: proof-lemma-descent-property-for-x-tilde,eq-5}
\begin{split}
\Phi(\widetilde{x}_{i+1}^k) \le & \ h(\widetilde{x}_{i+1}^k)+ f\Bigl(g(x_i^k)+ g'(x_i^k)(\widetilde{x}_{i+1}^k- x_i^k) \Bigr) + \frac{l_f L_g}{2} \|\widetilde{x}_{i+1}^k- x_i^k\|_2^2 \\
\le & \ \Phi(x_i^k) + l_f \|\widetilde{g}_i^k- g(x_i^k)\|_2 + \left(\frac{l_f L_g}{2}- M \right) \|\widetilde{x}_{i+1}^k- x_i^k\|_2^2 \\
&  + f\Bigl(g(x_i^k)+ g'(x_i^k)(\widetilde{x}_{i+1}^k- x_i^k) \Bigr) - f\Bigl(\widetilde{g}_i^k + \widetilde{J}_i^k (\widetilde{x}_{i+1}^k- x_i^k) \Bigr) .
\end{split}
\end{equation}
By Lemma \ref{lemma:approximation-step-in-f}, 
\begin{equation}
\label{eq: proof-lemma-descent-property-for-x-tilde,eq-6}
\begin{split}
& f\Bigl(g(x_i^k)+ g'(x_i^k)(\widetilde{x}_{i+1}^k- x_i^k) \Bigr) - f\Bigl(\widetilde{g}_i^k + \widetilde{J}_i^k (\widetilde{x}_{i+1}^k- x_i^k) \Bigr) \\
&\le l_f \| g(x_i^k)- \widetilde{g}_i^k \|_2 +\frac{l_f}{2 L_g} \|g'(x_i^k) - \widetilde{J}_i^k \|_{\rm{op}}^2 + \frac{l_f L_g}{2} \|\widetilde{x}_{i+1}^k- x_i^k\|_2^2.
\end{split}
\end{equation}
Finally, combining (\ref{eq: proof-lemma-descent-property-for-x-tilde,eq-5}) and (\ref{eq: proof-lemma-descent-property-for-x-tilde,eq-6}) gives the claim.
\end{proof}

\begin{lemma}
\label{lemma:descent-property-for-x}
Suppose Assumption \ref{assumption:subroutine} holds for $\mathtt{solver}$, then for an arbitrary $(k,i)\in \mathcal{I}(K, \boldsymbol{\tau} )$, the following holds with probability at least $1- \overline{\delta}$:
\begin{equation*}
\begin{split}
\Phi(x_{i+1}^k) - \Phi(x_i^k) \le &\ \overline{\epsilon} - \left( \frac{M}{2} - l_f L_g\right) \|x_{i+1}^k- x_i^k\|_2^2 - \left(\frac{M}{2} - 2 l_f L_g\right) \|\widetilde{x}_{i+1}^k- x_i^k\|_2^2 \\
& + 4 l_f \|\widetilde{g}_i^k- g(x_i^k)\|_2 + \frac{3 l_f}{2 L_g} \| \widetilde{J}_i^k - g'(x_i^k) \|_{\rm{op}}^2 .
\end{split}
\end{equation*}
With probability at least $1- \overline{\delta} \Sigma_\tau$, the inequality above holds for all $(k,i)\in \mathcal{I}(K, \boldsymbol{\tau} )$.
\end{lemma}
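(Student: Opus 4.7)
The plan is to mimic the proof of Lemma~\ref{lemma:descent-property-for-x-tilde}, but to replace the exact optimality of $\widetilde{x}_{i+1}^k$ with the $\overline{\epsilon}$-optimality of $x_{i+1}^k$ guaranteed (with probability at least $1-\overline{\delta}$) by Assumption~\ref{assumption:subroutine}. First I would condition on the event $\{s_i^k(x_{i+1}^k) \le s_i^k(\widetilde{x}_{i+1}^k) + \overline{\epsilon}\}$, which has probability at least $1 - \overline{\delta}$. Combining this with $M$-strong convexity of $s_i^k$ at its minimizer $\widetilde{x}_{i+1}^k$, namely $s_i^k(x_i^k) \ge s_i^k(\widetilde{x}_{i+1}^k) + \tfrac{M}{2}\|\widetilde{x}_{i+1}^k - x_i^k\|_2^2$, and then unpacking $s_i^k(x_i^k)$ and $s_i^k(x_{i+1}^k)$ from their definitions, one obtains
\begin{equation*}
f\bigl(\widetilde{g}_i^k + \widetilde{J}_i^k(x_{i+1}^k - x_i^k)\bigr) + h(x_{i+1}^k) \le f(\widetilde{g}_i^k) + h(x_i^k) + \overline{\epsilon} - \tfrac{M}{2}\|x_{i+1}^k - x_i^k\|_2^2 - \tfrac{M}{2}\|\widetilde{x}_{i+1}^k - x_i^k\|_2^2.
\end{equation*}
This single chained inequality is what simultaneously produces the two negative quadratic terms appearing in the claim.

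Next I would upper-bound $\Phi(x_{i+1}^k) = f(g(x_{i+1}^k)) + h(x_{i+1}^k)$ by applying Proposition~\ref{prop:linear-approx-in-f} with $x = x_{i+1}^k$, $y = x_i^k$, and then Lemma~\ref{lemma:approximation-step-in-f} at $x = x_{i+1}^k$, exactly as in the proof of Lemma~\ref{lemma:descent-property-for-x-tilde}, to pass from $f(g(x_{i+1}^k))$ to $f(\widetilde{g}_i^k + \widetilde{J}_i^k(x_{i+1}^k - x_i^k))$ at an additive cost of $l_f L_g \|x_{i+1}^k - x_i^k\|_2^2 + l_f\|\widetilde{g}_i^k - g(x_i^k)\|_2 + \tfrac{l_f}{2L_g}\|\widetilde{J}_i^k - g'(x_i^k)\|_{\mathrm{op}}^2$. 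Substituting the previously displayed inequality into the resulting bound, and using Lipschitz continuity of $f$ once more to replace $f(\widetilde{g}_i^k)$ by $f(g(x_i^k)) + l_f\|\widetilde{g}_i^k - g(x_i^k)\|_2$, yields
\begin{equation*}
\Phi(x_{i+1}^k) - \Phi(x_i^k) \le \overline{\epsilon} - \bigl(\tfrac{M}{2} - l_f L_g\bigr)\|x_{i+1}^k - x_i^k\|_2^2 - \tfrac{M}{2}\|\widetilde{x}_{i+1}^k - x_i^k\|_2^2 + 2l_f\|\widetilde{g}_i^k - g(x_i^k)\|_2 + \tfrac{l_f}{2L_g}\|\widetilde{J}_i^k - g'(x_i^k)\|_{\mathrm{op}}^2.
\end{equation*}
This is in fact sharper than the stated bound, since $-\tfrac{M}{2} \le -(\tfrac{M}{2} - 2 l_f L_g)$, $2l_f \le 4l_f$, and $\tfrac{l_f}{2L_g} \le \tfrac{3l_f}{2L_g}$; relaxing these constants recovers the claim.

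The uniform-in-$(k,i)$ statement then follows by a union bound over the $\Sigma_\tau$ solver calls, each succeeding with probability at least $1-\overline{\delta}$. I expect the only step requiring care to be the initial chaining: it must be arranged so that $M$-strong convexity at $\widetilde{x}_{i+1}^k$ contributes the $\tfrac{M}{2}\|\widetilde{x}_{i+1}^k - x_i^k\|_2^2$ term while the defining expression of $s_i^k(x_{i+1}^k)$ simultaneously contributes the $\tfrac{M}{2}\|x_{i+1}^k - x_i^k\|_2^2$ term on the same side of the inequality. Everything beyond that is a routine adaptation of Lemma~\ref{lemma:descent-property-for-x-tilde}.
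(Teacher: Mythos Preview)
Your argument is correct and in fact yields a strictly sharper inequality than the one stated. The paper takes a different route: it writes
\[
\Phi(x_{i+1}^k) - \Phi(x_i^k) = \bigl[s_i^k(x_{i+1}^k) - s_i^k(\widetilde{x}_{i+1}^k)\bigr] + \bigl[(\Phi-s_i^k)(x_{i+1}^k) - (\Phi-s_i^k)(\widetilde{x}_{i+1}^k)\bigr] + \bigl[\Phi(\widetilde{x}_{i+1}^k) - \Phi(x_i^k)\bigr],
\]
bounds the first bracket by $\overline{\epsilon}$ via Assumption~\ref{assumption:subroutine}, the third bracket by Lemma~\ref{lemma:descent-property-for-x-tilde}, and the middle bracket by expanding $(\Phi-s_i^k)(x)$ and applying Proposition~\ref{prop:linear-approx-in-f} and Lemma~\ref{lemma:approximation-step-in-f} at both $x_{i+1}^k$ and $\widetilde{x}_{i+1}^k$. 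That middle bracket is where the slack appears: bounding $-(\Phi-s_i^k)(\widetilde{x}_{i+1}^k)$ from above contributes a positive $(l_f L_g + \tfrac{M}{2})\|\widetilde{x}_{i+1}^k - x_i^k\|_2^2$ term and an extra copy of the estimation-error terms, which is why the paper ends up with coefficients $-(\tfrac{M}{2} - 2 l_f L_g)$, $4l_f$, and $\tfrac{3l_f}{2L_g}$ rather than your $-\tfrac{M}{2}$, $2l_f$, and $\tfrac{l_f}{2L_g}$.

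Your approach avoids this by never touching $(\Phi-s_i^k)(\widetilde{x}_{i+1}^k)$: you chain $s_i^k(x_{i+1}^k) \le s_i^k(\widetilde{x}_{i+1}^k) + \overline{\epsilon} \le s_i^k(x_i^k) - \tfrac{M}{2}\|\widetilde{x}_{i+1}^k - x_i^k\|_2^2 + \overline{\epsilon}$ in one step, and then compare $\Phi(x_{i+1}^k)$ to $s_i^k(x_{i+1}^k)$ directly. This is essentially the proof of Lemma~\ref{lemma:descent-property-for-x-tilde} re-run at $x_{i+1}^k$ with an $\overline{\epsilon}$ budget, rather than invoking that lemma as a black box and paying for a separate correction term. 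The paper's decomposition is more modular (it reuses Lemma~\ref{lemma:descent-property-for-x-tilde} verbatim); yours is more economical and gives better constants, though the difference is immaterial for the downstream results.
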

\begin{proof}[Proof of Lemma \ref{lemma:descent-property-for-x}]
Fix an arbitrary $(k,i)\in \mathcal{I}(K, \boldsymbol{\tau} )$. We can split $\Phi(x_{i+1}^k) - \Phi(x_i^k)$ into the sum of three parts:
\begin{equation}
\label{eq: proof-lemma-descent-property-for-x,eq-1}
\begin{split}
\Phi(x_{i+1}^k) - \Phi(x_i^k) 
&= \left[ s_i^k(x_{i+1}^k) - s_i^k(\widetilde{x}_{i+1}^k) \right] + \left[ (\Phi-s_i^k)(x_{i+1}^k) - (\Phi-s_i^k)(\widetilde{x}_{i+1}^k) \right]\\
&\quad + \left[ \Phi(\widetilde{x}_{i+1}^k) - \Phi(x_i^k) \right].
\end{split}
\end{equation}
By Lemma \ref{lemma:descent-property-for-x-tilde},
\begin{equation}
\label{eq: proof-lemma-descent-property-for-x,eq-2}
\Phi(\widetilde{x}_{i+1}^k) - \Phi(x_i^k) \le (l_f L_g- M) \|\widetilde{x}_{i+1}^k- x_i^k\|_2^2 + 2 l_f \|\widetilde{g}_i^k- g(x_i^k)\|_2 + \frac{l_f}{2 L_g} \|\widetilde{J}_i^k - g'(x_i^k) \|_{\rm{op}}^2.
\end{equation}
{\color{blue} We define $\mathcal{E}_{k, i}$ as the subset of the whole probability space, on which
\begin{equation}
\label{eq: proof-lemma-descent-property-for-x,eq-3}
s_i^k(x_{i+1}^k) - s_i^k (\widetilde{x}_{i+1}^k) \le \overline{\epsilon}.
\end{equation}
By Assumption \ref{assumption:subroutine}, $\mathbb{P}( \mathcal{E}_{k, i}) \ge 1- \overline{\delta}$.}

Then it remains to deal with $(\Phi-s_i^k)(x_{i+1}^k) - (\Phi-s_i^k)(\widetilde{x}_{i+1}^k)$.
Note that $(\Phi-s_i^k)(x) = f(g(x)) - f(\widetilde{g}_i^k + \widetilde{J}_i^k (x- x_i^k)) - \frac{M}{2} \|x- x_i^k\|_2^2$, so
\begin{equation}
\label{eq: proof-lemma-descent-property-for-x,eq-4}
\begin{split}
& (\Phi-s_i^k)(x_{i+1}^k) - (\Phi-s_i^k)(\widetilde{x}_{i+1}^k) \\
=& \ f(g(x_{i+1}^k)) - f\Bigl( \widetilde{g}_i^k + \widetilde{J}_i^k (x_{i+1}^k- x_i^k)\Bigr) - \frac{M}{2} \|x_{i+1}^k- x_i^k\|_2^2 \\
&- f(g(\widetilde{x}_{i+1}^k)) + f\Bigl( \widetilde{g}_i^k + \widetilde{J}_i^k (\widetilde{x}_{i+1}^k- x_i^k)\Bigr) + \frac{M}{2} \|\widetilde{x}_{i+1}^k- x_i^k\|_2^2 \\
\le& \ \left| f(g(x_{i+1}^k)) - f\Bigl( \widetilde{g}_i^k + \widetilde{J}_i^k (x_{i+1}^k- x_i^k)\Bigr)\right| - \frac{M}{2} \|x_{i+1}^k- x_i^k\|_2^2 \\
&+ {\color{blue} \left| f(g(\widetilde{x}_{i+1}^k)) - f\Bigl( \widetilde{g}_i^k + \widetilde{J}_i^k (\widetilde{x}_{i+1}^k- x_i^k)\Bigr)\right| } + \frac{M}{2} \|\widetilde{x}_{i+1}^k- x_i^k\|_2^2 .
\end{split}
\end{equation}
Let $y= x_i^k$ in Proposition \ref{prop:linear-approx-in-f}, and combine it with Lemma \ref{lemma:approximation-step-in-f}, we get the following inequality for any $x$:
\begin{equation}
\label{eq: proof-lemma-descent-property-for-x,eq-5}
\begin{split}
|f(g(x)) - f(\widetilde{g}_i^k + \widetilde{J}_i^k (x- x_i^k))| 
&\le \left| f(g(x)) - f\Bigl(g(x_i^k)+ g'(x_i^k) (x-x_i^k) \Bigr) \right|\\
& \quad + \left| f\Bigl(g(x_i^k)+ g'(x_i^k)(x- x_i^k) \Bigr) - f\Bigl(\widetilde{g}_i^k + \widetilde{J}_i^k (x- x_i^k) \Bigr) \right| \\
&\le l_f \|\widetilde{g}_i^k - g(x_i^k)\|_2 + \frac{l_f}{2 L_g} \|\widetilde{J}_i^k - g'(x_i^k) \|_{\rm{op}}^2 + l_f L_g \|x- x_i^k\|_2^2 . 
\end{split}
\end{equation}
Let $x=x_{i+1}^k$ and $\widetilde{x}_{i+1}^k$ in (\ref{eq: proof-lemma-descent-property-for-x,eq-5}) respectively, and plug into (\ref{eq: proof-lemma-descent-property-for-x,eq-4}),
\begin{equation}
\label{eq: proof-lemma-descent-property-for-x,eq-6}
\begin{split}
&(\Phi-s_i^k)(x_{i+1}^k) - (\Phi-s_i^k)(\widetilde{x}_{i+1}^k) \\
&\le 2 l_f \|\widetilde{g}_i^k - g(x_i^k)\|_2 + \frac{l_f}{L_g} \|\widetilde{J}_i^k - g'(x_i^k) \|_{\rm{op}}^2 + \left(l_f L_g -\frac{M}{2} \right) \|x_{i+1}^k- x_i^k\|_2^2 + \left(l_f L_g +\frac{M}{2} \right) \|\widetilde{x}_{i+1}^k- x_i^k\|_2^2 .
\end{split}
\end{equation}
Finally, on the set $\mathcal{E}_{k, i}$, we can use (\ref{eq: proof-lemma-descent-property-for-x,eq-2}), (\ref{eq: proof-lemma-descent-property-for-x,eq-3}), and (\ref{eq: proof-lemma-descent-property-for-x,eq-6}) to upper bound the three parts on the right side of (\ref{eq: proof-lemma-descent-property-for-x,eq-1}) :
\begin{equation*}
\begin{split}
&\Phi(x_{i+1}^k) - \Phi(x_i^k) \\
&= s_i^k(x_{i+1}^k) - s_i^k(\widetilde{x}_{i+1}^k) + (\Phi-s_i^k)(x_{i+1}^k) - (\Phi-s_i^k)(\widetilde{x}_{i+1}^k) + \Phi(\widetilde{x}_{i+1}^k) - \Phi(x_i^k) \\
&\leq \overline{\epsilon} + \left(l_f L_g- \frac{M}{2}\right) \|x_{i+1}^k- x_i^k\|_2^2 + \left(2 l_f L_g- \frac{M}{2}\right) \|\widetilde{x}_{i+1}^k- x_i^k\|_2^2 \\
& + 4 l_f \|\widetilde{g}_i^k- g(x_i^k)\|_2 + \frac{3 l_f}{2 L_g} \| \widetilde{J}_i^k - g'(x_i^k) \|_{\rm{op}}^2 .
\end{split}
\end{equation*}
The inequality above holds for all $(k,i)\in \mathcal{I}(K, \boldsymbol{\tau} )$ on the set $\cap_{(k,i)\in \mathcal{I}(K, \boldsymbol{\tau} )} \mathcal{E}_{k, i}$, which has probability at least $1- \overline{\delta} \Sigma_\tau$ by a simple union bound.
\end{proof}

\begin{lemma}
\label{lemma:one-step-analysis-with-error}
Suppose Assumption \ref{assumption:subroutine} holds for $\mathtt{solver}$. If $M>5 l_f L_g$, then with probability at least $1- \overline{\delta} \Sigma_\tau$, the following holds for all $(k,i)\in \mathcal{I}(K, \boldsymbol{\tau} )$:
\begin{equation*}
\begin{split}
\frac{2M}{5} \|\widehat{x}_{i+1}^k - x_i^k\|_2^2 \le &\ 12 \overline{\epsilon} + 12\left( \Phi(x_i^k) - \Phi(x_{i+1}^k) \right)+ 50 l_f \|\widetilde{g}_i^k- g(x_i^k)\|_2\\
&  + 19\frac{l_f}{L_g} \| \widetilde{J}_i^k - g'(x_i^k) \|_{\rm{op}}^2  -12 \left(\frac{M}{2} - l_f L_g\right) \|x_{i+1}^k- x_i^k\|_2^2 .
\end{split}
\end{equation*}
\end{lemma}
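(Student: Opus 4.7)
\noindent\textbf{Proof plan for Lemma \ref{lemma:one-step-analysis-with-error}.}
The plan is to combine the two previously established lemmas --- Lemma \ref{lemma:distance-involves-x-hat} (which relates $\|\widehat{x}_{i+1}^k - x_i^k\|_2^2$ to $\|\widetilde{x}_{i+1}^k - x_i^k\|_2^2$ plus estimation errors) with Lemma \ref{lemma:descent-property-for-x} (which shows the descent $\Phi(x_i^k) - \Phi(x_{i+1}^k)$ in turn dominates $\|\widetilde{x}_{i+1}^k - x_i^k\|_2^2$). The key point is that the $\|\widetilde{x}_{i+1}^k - x_i^k\|_2^2$ term appearing as a positive contribution on the right-hand side of Lemma \ref{lemma:distance-involves-x-hat} can be absorbed into the negative $\|\widetilde{x}_{i+1}^k - x_i^k\|_2^2$ term appearing on the right-hand side of Lemma \ref{lemma:descent-property-for-x}, provided the coefficients balance. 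This balancing is exactly what the hypothesis $M > 5 l_f L_g$ delivers.

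First, I would apply Lemma \ref{lemma:distance-involves-x-hat} and note that since $M > 5 l_f L_g$, the left-hand coefficient satisfies $\tfrac{M}{2} - \tfrac{l_f L_g}{2} > \tfrac{M}{2} - \tfrac{M}{10} = \tfrac{2M}{5}$, giving
\[
\tfrac{2M}{5} \|\widehat{x}_{i+1}^k - x_i^k\|_2^2 \;\le\; 2 l_f \|\widetilde{g}_i^k - g(x_i^k)\|_2 + \tfrac{l_f}{L_g} \|\widetilde{J}_i^k - g'(x_i^k)\|_{\rm op}^2 + \bigl(M + \tfrac{l_f L_g}{2}\bigr) \|\widetilde{x}_{i+1}^k - x_i^k\|_2^2.
\]
Next, on the event of probability at least $1 - \overline{\delta}\Sigma_\tau$ where Lemma \ref{lemma:descent-property-for-x} holds simultaneously for all $(k,i)$, I would rearrange that lemma to solve for $\|\widetilde{x}_{i+1}^k - x_i^k\|_2^2$:
\[
\bigl(\tfrac{M}{2} - 2 l_f L_g\bigr) \|\widetilde{x}_{i+1}^k - x_i^k\|_2^2 \;\le\; \overline{\epsilon} + \bigl(\Phi(x_i^k) - \Phi(x_{i+1}^k)\bigr) - \bigl(\tfrac{M}{2} - l_f L_g\bigr) \|x_{i+1}^k - x_i^k\|_2^2 + 4 l_f \|\widetilde{g}_i^k - g(x_i^k)\|_2 + \tfrac{3 l_f}{2 L_g} \|\widetilde{J}_i^k - g'(x_i^k)\|_{\rm op}^2.
\]
Since $M > 5 l_f L_g$ ensures $\tfrac{M}{2} - 2 l_f L_g > \tfrac{M}{10} > 0$, this inequality may be divided through to isolate $\|\widetilde{x}_{i+1}^k - x_i^k\|_2^2$, and then multiplied by $M + \tfrac{l_f L_g}{2}$ before substituting into the bound from the previous paragraph.

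The only quantitative check is that the resulting multiplier $(M + \tfrac{l_f L_g}{2})/(\tfrac{M}{2} - 2 l_f L_g)$ is at most $12$. This is equivalent to $5M \ge 24.5\, l_f L_g$, which follows immediately from $M > 5 l_f L_g$. Propagating this factor of (at most) $12$ through the right-hand side of Lemma \ref{lemma:descent-property-for-x} and adding the estimation-error and $\|\widetilde{g}_i^k - g(x_i^k)\|_2$, $\|\widetilde{J}_i^k - g'(x_i^k)\|_{\rm op}^2$ contributions already present from Lemma \ref{lemma:distance-involves-x-hat} yields coefficients $12$, $12$, $2 + 12\cdot 4 = 50$, and $1 + 12 \cdot \tfrac{3}{2} = 19$ respectively, matching the stated bound exactly. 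The probability bound $1 - \overline{\delta}\Sigma_\tau$ is inherited directly from Lemma \ref{lemma:descent-property-for-x} since Lemma \ref{lemma:distance-involves-x-hat} is deterministic.

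There is no conceptual obstacle --- the work is bookkeeping. The one place to be careful is verifying that the constant $12$ is compatible with the margin $M > 5 l_f L_g$ (a slightly tighter constant would need a stronger separation), but the arithmetic above shows this margin is exactly what licenses the factor $12$.
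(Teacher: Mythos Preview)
Your proposal is correct and follows essentially the same route as the paper's proof: weaken the left side of Lemma~\ref{lemma:distance-involves-x-hat} using $M>5l_fL_g$ to get the coefficient $\tfrac{2M}{5}$, multiply the rearranged Lemma~\ref{lemma:descent-property-for-x} by $12$ (the paper does this directly rather than via the ratio, but your check $5M\ge 24.5\,l_fL_g$ is exactly the inequality $12(\tfrac{M}{2}-2l_fL_g)\ge M+\tfrac{l_fL_g}{2}$ the paper uses), and then add. One small remark on your write-up: when you ``propagate the factor of (at most) $12$'' term-by-term, note that this is justified because the entire right-hand side of the rearranged Lemma~\ref{lemma:descent-property-for-x} is nonnegative (it upper-bounds $(\tfrac{M}{2}-2l_fL_g)\|\widetilde{x}_{i+1}^k-x_i^k\|_2^2\ge 0$); without that observation, replacing $c\le 12$ by $12$ on the negative $\|x_{i+1}^k-x_i^k\|_2^2$ term individually would go the wrong way. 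The paper sidesteps this by multiplying by exactly $12$ and then dropping only the nonnegative excess in the $\|\widetilde{x}_{i+1}^k-x_i^k\|_2^2$ coefficient, but your version is equally valid once the nonnegativity of the bracket is noted.
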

\begin{proof}[Proof of Lemma \ref{lemma:one-step-analysis-with-error}]
$M>5l_f L_g$ implies $\frac{M}{2} - \frac{l_f L_g}{2} > \frac{2M}{5}$. Then from Lemma \ref{lemma:distance-involves-x-hat}, we have
\begin{equation}
\label{eq: proof-lemma-one-step-analysis-with-error,eq-1}
\frac{2M}{5} \|\widehat{x}_{i+1}^k - x_i^k\|_2^2 \le 2 l_f \|\widetilde{g}_i^k - g(x_i^k)\|_2 + \frac{l_f}{L_g} \|\widetilde{J}_i^k - g'(x_i^k) \|_{\rm{op}}^2 + \left(M+ \frac{l_f L_g}{2}\right) \|\widetilde{x}_{i+1}^k- x_i^k\|_2^2 .
\end{equation}
By Lemma \ref{lemma:descent-property-for-x}, there exists a subset $\mathcal{E}$ of the whole probability space, such that $\mathbb{P}( \mathcal{E} ) \ge 1- \overline{\delta} \Sigma_\tau$, and the following inequality holds for all $(k,i)\in \mathcal{I}(K, \boldsymbol{\tau} )$ on $\mathcal{E}$:
\begin{equation}
\label{eq: proof-lemma-one-step-analysis-with-error,eq-2}
\begin{split}
& \left(\frac{M}{2} - l_f L_g\right) \|x_{i+1}^k- x_i^k\|_2^2 + \left(\frac{M}{2} - 2 l_f L_g\right) \|\widetilde{x}_{i+1}^k- x_i^k\|_2^2 \\
&\le \overline{\epsilon} + \Phi(x_i^k) - \Phi(x_{i+1}^k) + 4 l_f \|\widetilde{g}_i^k- g(x_i^k)\|_2 + \frac{3 l_f}{2 L_g} \| \widetilde{J}_i^k - g'(x_i^k) \|_{\rm{op}}^2 .
\end{split}
\end{equation}
Note that $12(\frac{M}{2} - 2 l_f L_g) > M+ \frac{l_f L_g}{2}$, we can multiply (\ref{eq: proof-lemma-one-step-analysis-with-error,eq-2}) by 12 and combine with (\ref{eq: proof-lemma-one-step-analysis-with-error,eq-1}) to get the claim
for all $(k,i)\in \mathcal{I}(K, \boldsymbol{\tau} )$ on $\mathcal{E}$, with probability at least $1- \overline{\delta} \Sigma_\tau$.
\end{proof}

\begin{lemma}
\label{lemma:general-lemma-one-step}
Suppose Assumption \ref{assumption:general-estimation-error-bounds} holds for $\mathtt{estimator}$, and Assumption \ref{assumption:subroutine} holds for $\mathtt{solver}$. Fix an $M>5 l_f L_g$. Then for any $K\in \mathbb{N}_+$, $\boldsymbol{\tau} \in \mathbb{N}_+^K$, $\Delta\in (0,1)$, $\theta \in \mathcal{C}(K, \boldsymbol{\tau}, \Delta)$, and an arbitrary set of positive reals $\{\alpha_{(k,i)} >0: (k,i)\in \mathcal{I}(K, \boldsymbol{\tau} ) \}$, with probability at least $1- \overline{\delta} \Sigma_\tau - \Delta$, the following holds for all $(k,i)\in \mathcal{I}(K, \boldsymbol{\tau} )$:
\begin{equation*}
\begin{split}
\frac{2M}{5} \|\widehat{x}_{i+1}^k - x_i^k\|_2^2 & \le 12 \overline{\epsilon} + 12\left( \Phi(x_i^k) - \Phi(x_{i+1}^k) \right) -12 \left(\frac{M}{2} - l_f L_g\right) \|x_{i+1}^k- x_i^k\|_2^2 \\
& + \left(50 l_f \gamma_0(K, \boldsymbol{\tau}, \theta, \Delta)+ 25 l_f \alpha_{(k,i)} \gamma_1(K, \boldsymbol{\tau}, \theta, \Delta) + 38\frac{l_f}{L_g} \lambda_0^2(K, \boldsymbol{\tau}, \theta, \Delta) \right) \\
& + \left(50 l_f \gamma_2(K, \boldsymbol{\tau}, \theta, \Delta)+ \frac{25 l_f \gamma_1(K, \boldsymbol{\tau}, \theta, \Delta)}{\alpha_{(k,i)}} + 38\frac{l_f}{L_g} \lambda_1^2(K, \boldsymbol{\tau}, \theta, \Delta) \right) \|x_i^k - x_0^k\|_2^2.
\end{split}
\end{equation*}
\end{lemma}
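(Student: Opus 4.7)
The plan is to derive this lemma as a direct consequence of Lemma~\ref{lemma:one-step-analysis-with-error} combined with the abstract error bounds in Assumption~\ref{assumption:general-estimation-error-bounds}, linked by a union bound. The structure of the target inequality already matches the conclusion of Lemma~\ref{lemma:one-step-analysis-with-error} on its deterministic terms (the $\overline{\epsilon}$ term, the telescoping $\Phi(x_i^k) - \Phi(x_{i+1}^k)$ term, and the negative $\|x_{i+1}^k - x_i^k\|_2^2$ term), so the only real work is bounding the stochastic error quantities $\|\widetilde{g}_i^k - g(x_i^k)\|_2$ and $\|\widetilde{J}_i^k - g'(x_i^k)\|_{\rm{op}}^2$ and splitting any $\|x_i^k - x_0^k\|_2$ dependence into a constant piece and a quadratic piece.

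First, I would invoke Lemma~\ref{lemma:one-step-analysis-with-error}, which holds uniformly over $(k,i) \in \mathcal{I}(K, \boldsymbol{\tau})$ with probability at least $1 - \overline{\delta}\Sigma_\tau$. Independently, since $\theta \in \mathcal{C}(K, \boldsymbol{\tau}, \Delta)$, Assumption~\ref{assumption:general-estimation-error-bounds} yields, with probability at least $1 - \Delta$, the simultaneous bounds
\begin{align*}
\|\widetilde{g}_i^k - g(x_i^k)\|_2 &\le \gamma_0 + \gamma_1 \|x_i^k - x_0^k\|_2 + \gamma_2 \|x_i^k - x_0^k\|_2^2,\\
\|\widetilde{J}_i^k - g'(x_i^k)\|_{\rm{op}} &\le \lambda_0 + \lambda_1 \|x_i^k - x_0^k\|_2,
\end{align*}
for all $(k,i)$, where I suppress the arguments of $\gamma_l, \lambda_l$ for brevity. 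A union bound then delivers both events simultaneously with probability at least $1 - \overline{\delta}\Sigma_\tau - \Delta$, and on this event I may substitute the error bounds into Lemma~\ref{lemma:one-step-analysis-with-error}.

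Second, I would handle the substitution algebraically. The Jacobian error enters squared, so I apply the elementary inequality $(a+b)^2 \le 2a^2 + 2b^2$ to obtain $\|\widetilde{J}_i^k - g'(x_i^k)\|_{\rm{op}}^2 \le 2\lambda_0^2 + 2\lambda_1^2 \|x_i^k - x_0^k\|_2^2$, and multiplying by $19 l_f/L_g$ gives exactly the $38\, l_f/L_g$ coefficients appearing in the statement. For the linear-in-distance term $50 l_f \gamma_1 \|x_i^k - x_0^k\|_2$ coming from the bound on $\|\widetilde{g}_i^k - g(x_i^k)\|_2$, I would apply a weighted AM--GM/Young inequality with the free parameter $\alpha_{(k,i)} > 0$, namely $ab \le \tfrac{1}{2}\alpha_{(k,i)} a^2 + \tfrac{1}{2\alpha_{(k,i)}} b^2$ applied with $a = \sqrt{\gamma_1}$ and $b = \sqrt{\gamma_1}\|x_i^k - x_0^k\|_2$. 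This produces the split $25 l_f \alpha_{(k,i)} \gamma_1 + (25 l_f \gamma_1 / \alpha_{(k,i)}) \|x_i^k - x_0^k\|_2^2$, matching the stated coefficients. The $\gamma_0$ and $\gamma_2$ contributions pass through unchanged with their factors of $50 l_f$.

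There is no real conceptual obstacle; the only care required is matching the prescribed constants exactly, which dictates using $(a+b)^2 \le 2a^2 + 2b^2$ for the Jacobian term and an $\alpha_{(k,i)}$-weighted AM--GM precisely so that the constant and quadratic contributions produced by $\gamma_1$ carry reciprocal weights. Collecting the constant-in-$\|x_i^k - x_0^k\|_2$ contributions into one parenthetical group and the coefficients of $\|x_i^k - x_0^k\|_2^2$ into another then gives the stated inequality, simultaneously for all $(k,i) \in \mathcal{I}(K, \boldsymbol{\tau})$, completing the proof.
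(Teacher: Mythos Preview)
Your proposal is correct and follows essentially the same approach as the paper: invoke Lemma~\ref{lemma:one-step-analysis-with-error} on one event, Assumption~\ref{assumption:general-estimation-error-bounds} on another, take a union bound, and then substitute using $(a+b)^2\le 2a^2+2b^2$ for the Jacobian term and a weighted AM--GM with parameter $\alpha_{(k,i)}$ to split the $\gamma_1\|x_i^k-x_0^k\|_2$ contribution. The only cosmetic remark is that the word ``Independently'' should not be read as asserting probabilistic independence of the two events; the union bound does not require it, and your argument uses only the union bound.
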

\begin{proof}[Proof of Lemma \ref{lemma:general-lemma-one-step}]
As a shorthand, we use $\gamma_\ell$, $\lambda_\ell$ for $\gamma_\ell (\cdot, \cdot, \cdot, \cdot)$, $\lambda_\ell (\cdot, \cdot, \cdot, \cdot)$ as the arguments are clear from context. Note that $M>5 l_f L_g$, then by Lemma \ref{lemma:one-step-analysis-with-error}, there exists a subset $\mathcal{E}_1$ of the whole probability space, such that $\mathbb{P} (\mathcal{E}_1) \ge 1- \overline{\delta} \Sigma_\tau$, and the following inequality holds for all $(k,i)\in \mathcal{I}(K, \boldsymbol{\tau} )$ on $\mathcal{E}_1$:
\begin{equation}
\label{eq: proof-lemma-general-lemma-one-step,eq-1}
\begin{split}
\frac{2M}{5} \|\widehat{x}_{i+1}^k - x_i^k\|_2^2 \le & \ 12 \overline{\epsilon} + 12\left( \Phi(x_i^k) - \Phi(x_{i+1}^k) \right) + 50 l_f \|\widetilde{g}_i^k- g(x_i^k)\|_2 \\
&+ 19\frac{l_f}{L_g} \| \widetilde{J}_i^k - g'(x_i^k) \|_{\rm{op}}^2 - 12 \left(\frac{M}{2} - l_f L_g\right) \|x_{i+1}^k- x_i^k\|_2^2 .
\end{split}
\end{equation}
By Assumption \ref{assumption:general-estimation-error-bounds}, there exists a subset $\mathcal{E}_2$ of the whole probability space, such that $\mathbb{P} (\mathcal{E}_2) \ge 1-\Delta$, and the following two inequalities hold for all $(k,i)\in \mathcal{I}(K, \boldsymbol{\tau} )$ on $\mathcal{E}_2$:
\begin{align*}
& \|\widetilde{g}_i^k- g(x_i^k)\|_2 \le \gamma_0 + \gamma_1 \|x_i^k - x_0^k\|_2 + \gamma_2 \|x_i^k - x_0^k\|_2^2 ,\\
& \|\widetilde{J}_i^k- g'(x_i^k) \|_{\rm{op}} \le \lambda_0 + \lambda_1 \|x_i^k - x_0^k\|_2 .
\end{align*}
Then the next two inequalities also hold for all $(k,i)\in \mathcal{I}(K, \boldsymbol{\tau} )$ on $\mathcal{E}_2$:
\begin{align}
\label{eq: proof-lemma-general-lemma-one-step,eq-2}
& \|\widetilde{g}_i^k- g(x_i^k)\|_2 \le (\gamma_0+ \frac{\alpha_{(k,i)} \gamma_1}{2}) + (\gamma_2+ \frac{\gamma_1}{2 \alpha_{(k,i)}}) \|x_i^k - x_0^k\|_2^2 ,\\
\label{eq: proof-lemma-general-lemma-one-step,eq-3}
& \|\widetilde{J}_i^k- g'(x_i^k) \|_{\rm{op}} ^2 \le 2\lambda_0^2 + 2\lambda_1^2 \|x_i^k - x_0^k\|_2^2 ,
\end{align}
where the $\alpha_{(k,i)}$ in (\ref{eq: proof-lemma-general-lemma-one-step,eq-2}) can be arbitrary positive real number. 
Use (\ref{eq: proof-lemma-general-lemma-one-step,eq-2}) and (\ref{eq: proof-lemma-general-lemma-one-step,eq-3}) to upper bound $\|\widetilde{g}_i^k- g(x_i^k)\|_2$ and $\| \widetilde{J}_i^k - g'(x_i^k) \|_{\rm{op}}^2$ in (\ref{eq: proof-lemma-general-lemma-one-step,eq-1}) on the set $\mathcal{E}_1 \cap \mathcal{E}_2$:
\begin{equation*}
\begin{split}
\frac{2M}{5} \|\widehat{x}_{i+1}^k - x_i^k\|_2^2
& \le 12 \overline{\epsilon} + 12\left( \Phi(x_i^k) - \Phi(x_{i+1}^k) \right) -12 \left(\frac{M}{2} - l_f L_g\right) \|x_{i+1}^k- x_i^k\|_2^2 \\
& + (50 l_f \gamma_0+ 25 l_f \alpha_{(k,i)} \gamma_1 + 38\frac{l_f}{L_g} \lambda_0^2) + (50 l_f \gamma_2+ \frac{25 l_f \gamma_1}{\alpha_{(k,i)}} + 38\frac{l_f}{L_g} \lambda_1^2) \|x_i^k - x_0^k\|_2^2 ,
\end{split}
\end{equation*}
for all $(k,i)\in \mathcal{I}(K, \boldsymbol{\tau} )$ on $\mathcal{E}_1 \cap \mathcal{E}_2$, which has probability at least $1- \overline{\delta} \Sigma_\tau - \Delta$.
\end{proof}

\begin{theorem}
\label{thm:unified-thm}
Suppose Assumption \ref{assumption:general-estimation-error-bounds} holds for $\mathtt{estimator}$, and Assumption \ref{assumption:subroutine} holds for $\mathtt{solver}$. Fix an $M>5 l_f L_g$.
If some $K\in \mathbb{N}_+$, $\boldsymbol{\tau} \in \mathbb{N}_+^K$, $\Delta\in (0,1)$ and $\theta \in \mathcal{C}(K, \boldsymbol{\tau}, \Delta)$ satisfy
\begin{equation}
\label{eq: thm-unified-thm,eq-1}
\color{blue} 25 \tau_{\text{max}}^2 \gamma_2(K, \boldsymbol{\tau}, \theta, \Delta) \le 6 L_g \quad\text{and}\quad 19 \tau_{\text{max}}^2 \lambda_1^2(K, \boldsymbol{\tau}, \theta, \Delta) \le 6 L_g^2 
\end{equation}
where $\tau_{\text{max}} = \max\{ \tau_0,..., \tau_{K-1}\}$, 
then the following holds with probability at least $1- \overline{\delta} \Sigma_\tau - \Delta$:
\begin{equation}
\label{eq: thm-unified-thm,eq-2}
\begin{split}
& \frac{1}{\Sigma_\tau} \sum_{k=0}^{K-1} \sum_{i=0}^{\tau_k -1} \| \mathcal{G}_M(x_i^k) \|_2 ^2\\
& \le 30M \overline{\epsilon} + 30M \frac{\Phi(x_0^0) - \Phi(x_0^{K})}{ \Sigma_\tau} + 125M l_f \gamma_0(K, \boldsymbol{\tau}, \theta, \Delta)\\
&\quad + {\color{blue} 135M \frac{l_f}{L_g} \tau_{\text{max}}^2 \gamma_1^2(K, \boldsymbol{\tau}, \theta, \Delta) } + 95M\frac{l_f}{L_g} \lambda_0^2(K, \boldsymbol{\tau}, \theta, \Delta).
\end{split}
\end{equation}
\end{theorem}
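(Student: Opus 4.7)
\textbf{Proof proposal for Theorem \ref{thm:unified-thm}.}
The plan is to sum the per-iteration bound from Lemma \ref{lemma:general-lemma-one-step} over all $(k,i)\in \mathcal{I}(K,\boldsymbol{\tau})$, telescope the $\Phi$ terms using $x_0^{k+1}=x_{\tau_k}^k$, absorb the accumulated $\|x_i^k-x_0^k\|_2^2$ terms into the negative $\|x_{i+1}^k-x_i^k\|_2^2$ term using the conditions in~\eqref{eq: thm-unified-thm,eq-1}, and finally rescale to convert $\|\widehat{x}_{i+1}^k-x_i^k\|_2^2$ into $\|\mathcal{G}_M(x_i^k)\|_2^2/M^2$.

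First, I would invoke Lemma \ref{lemma:general-lemma-one-step} with a carefully selected sequence $\{\alpha_{(k,i)}\}$. A convenient choice is a constant $\alpha_{(k,i)} = c_\alpha (1+\tau_{\max})^2 \gamma_1/L_g$ for a small constant $c_\alpha$, chosen so that the Young-split ``linear-in-$\gamma_1$'' contribution $25 l_f\alpha_{(k,i)}\gamma_1$ fits inside the target coefficient $525 M (l_f/L_g)(1+\tau_{\max})^2\gamma_1^2$ (after the factor $\frac{5M}{2}$ introduced later), while the reciprocal contribution $25 l_f\gamma_1/\alpha_{(k,i)}\cdot\|x_i^k-x_0^k\|_2^2$ ends up with coefficient $\Theta(l_fL_g/(1+\tau_{\max})^2)$ on $\|x_i^k-x_0^k\|_2^2$, comparable in scale to the $50 l_f\gamma_2$ and $38(l_f/L_g)\lambda_1^2$ terms controlled by~\eqref{eq: thm-unified-thm,eq-1}. (If $\gamma_1=0$, simply omit the split.)

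Next, I would sum over $(k,i)$. The $\Phi$ differences telescope across both loops to $\Phi(x_0^0)-\Phi(x_0^K)$ because Algorithm \ref{algo:general-framework} sets $x_0^{k+1}=x_{\tau_k}^k$. The main technical step is bounding $\sum_{k,i}\|x_i^k-x_0^k\|_2^2$. Using Cauchy--Schwarz,
\[
\|x_i^k-x_0^k\|_2^2 = \Bigl\|\sum_{j=0}^{i-1}(x_{j+1}^k-x_j^k)\Bigr\|_2^2 \le i\sum_{j=0}^{i-1}\|x_{j+1}^k-x_j^k\|_2^2,
\]
and then swapping the order of summation within each epoch,
\[
\sum_{i=0}^{\tau_k-1}\|x_i^k-x_0^k\|_2^2 \le \frac{\tau_k(\tau_k-1)}{2}\sum_{j=0}^{\tau_k-1}\|x_{j+1}^k-x_j^k\|_2^2 \le \frac{(1+\tau_{\max})^2}{2}\sum_{j=0}^{\tau_k-1}\|x_{j+1}^k-x_j^k\|_2^2,
\]
so $\sum_{k,i}\|x_i^k-x_0^k\|_2^2 \le \tfrac{(1+\tau_{\max})^2}{2}\sum_{k,i}\|x_{i+1}^k-x_i^k\|_2^2$.

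With this bound in hand, the coefficient of $\sum_{k,i}\|x_{i+1}^k-x_i^k\|_2^2$ picks up contributions $\frac{5M}{2}\cdot 50 l_f\gamma_2\cdot\frac{(1+\tau_{\max})^2}{2}$, $\frac{5M}{2}\cdot 38(l_f/L_g)\lambda_1^2\cdot\frac{(1+\tau_{\max})^2}{2}$, the $\gamma_1/\alpha$ piece, and the negative $-12(M/2-l_fL_g)$. Under \eqref{eq: thm-unified-thm,eq-1} and $M>5l_fL_g$ (which gives $M/2-l_fL_g>3M/10$), these positive contributions are dominated by the negative term, so the entire $\|x_{i+1}^k-x_i^k\|_2^2$ sum can be dropped. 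Dividing through by $\Sigma_\tau$ and multiplying by $\frac{5M}{2}$ converts $\frac{2M}{5}\|\widehat{x}_{i+1}^k-x_i^k\|_2^2$ into $M^2\|\widehat{x}_{i+1}^k-x_i^k\|_2^2 = \|\mathcal{G}_M(x_i^k)\|_2^2$, and the remaining terms match the coefficients $30M$, $30M/\Sigma_\tau$, $125Ml_f$, $525M(l_f/L_g)(1+\tau_{\max})^2$, and $95M(l_f/L_g)$ in~\eqref{eq: thm-unified-thm,eq-2}.

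The main obstacle will be bookkeeping: choosing $\alpha_{(k,i)}$ and verifying the arithmetic that every coefficient from $\gamma_2$, $\gamma_1/\alpha$, and $\lambda_1^2$, each scaled by $\frac{5M}{2}\cdot\frac{(1+\tau_{\max})^2}{2}$, is truly absorbed into $12(M/2-l_fL_g)\ge 18M/5$ with the slack afforded by the numerical constants $50$, $38$, and $3$ in~\eqref{eq: thm-unified-thm,eq-1}. The high-probability guarantee is inherited directly from the joint success events in Lemma \ref{lemma:general-lemma-one-step}, whose probability is at least $1-\overline{\delta}\Sigma_\tau-\Delta$.
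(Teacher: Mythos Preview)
Your approach is correct and takes a genuinely different route from the paper. The paper controls the accumulated $\|x_i^k-x_0^k\|_2^2$ terms by a Lyapunov-style telescoping: it introduces a backward-recursive sequence $c_{(k,i)}$ (with $c_{(k,\tau_k)}=0$, $c_{(k,i)}=(1+1/\tau_k)c_{(k,i+1)}+d_k$) and uses the per-step Young inequality $(1+\tau_k)\|x_{i+1}^k-x_i^k\|_2^2 \ge \|x_{i+1}^k-x_0^k\|_2^2 - (1+1/\tau_k)\|x_i^k-x_0^k\|_2^2$ so that the $\|x_i^k-x_0^k\|_2^2$ terms cancel exactly across $i$ within each epoch. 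Your alternative---summing first and bounding $\sum_i\|x_i^k-x_0^k\|_2^2\le\tfrac{\tau_k(\tau_k-1)}{2}\sum_j\|x_{j+1}^k-x_j^k\|_2^2$ via Cauchy--Schwarz---is more elementary and in fact yields a slightly smaller multiplier than the paper's effective factor (roughly $2(1+\tau_k)^2$), so the absorption into $12(M/2-l_fL_g)>18l_fL_g$ goes through with room to spare under~\eqref{eq: thm-unified-thm,eq-1}. Your choice of $\alpha_{(k,i)}$ agrees with the paper's $\bar\alpha_k=\tfrac{25\gamma_1}{3L_g}(1+\tau_k)^2$ up to replacing $\tau_k$ by $\tau_{\max}$. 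One minor bookkeeping slip: in your coefficient comparison you attach the final $\tfrac{5M}{2}$ scaling to the positive contributions but not to the negative $-12(M/2-l_fL_g)$; since the $\tfrac{5M}{2}$ is applied only at the very end, the sign comparison should be made before that scaling---and then it is clean, with positive total at most $\tfrac{9}{2}l_fL_g$ against the negative $18l_fL_g$.
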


{\color{blue}
\begin{proof}[Proof of Theorem \ref{thm:unified-thm}]
As a shorthand, we use $\gamma_\ell$, $\lambda_\ell$ for $\gamma_\ell (\cdot, \cdot, \cdot, \cdot)$, $\lambda_\ell (\cdot, \cdot, \cdot, \cdot)$ as the arguments are clear from context. By letting $\alpha_{(k, i)} = \overline{\alpha}_k := \frac{25 \gamma_1}{12 L_g} \tau_k^2$ in Lemma \ref{lemma:general-lemma-one-step}, there exists a subset $\mathcal{E}$ of the whole probability space, such that $\mathbb{P} (\mathcal{E}) \ge 1- \overline{\delta} \Sigma_\tau - \Delta$, and the inequality below holds for all $(k,i)\in \mathcal{I}(K, \boldsymbol{\tau} )$ on $\mathcal{E}$:
\begin{equation*}
\begin{split}
&\frac{2M}{5} \|\widehat{x}_{i+1}^k - x_i^k\|_2^2 \\
& \le 12 \overline{\epsilon} + 12\left( \Phi(x_i^k) - \Phi(x_{i+1}^k) \right) - 12 (\frac{M}{2} - l_f L_g) \|x_{i+1}^k- x_i^k\|_2^2 \\
& \quad + \left( 50 l_f \gamma_0+ 25 l_f \overline{\alpha}_k \gamma_1 + 38\frac{l_f}{L_g} \lambda_0^2 \right) + \left( 50 l_f \gamma_2+ \frac{12 l_f L_g}{\tau_k^2} + 38\frac{l_f}{L_g} \lambda_1^2 \right) \|x_i^k - x_0^k\|_2^2 \\
&= 12 \overline{\epsilon} + 12\left( \Phi(x_i^k) - \Phi(x_{i+1}^k) \right) - C \|x_{i+1}^k- x_i^k\|_2^2 + a_k + b_k \|x_i^k - x_0^k\|_2^2 ,
\end{split}
\end{equation*}
where we denote $C=12 (\frac{M}{2} - l_f L_g)$, $a_k= 50 l_f \gamma_0+ 25 l_f \overline{\alpha}_k \gamma_1 + 38\frac{l_f}{L_g} \lambda_0^2$, $b_k= 50 l_f \gamma_2+ \frac{12 l_f L_g}{\tau_k^2} + 38\frac{l_f}{L_g} \lambda_1^2$.
Then we can fix an arbitrary $k$, let $i$ range over $0,..., \tau_k-1$ and take the sum. So the following holds for all $k=0,..., K-1$ on $\mathcal{E}$:
\begin{equation}
\label{eq: proof-thm-unified-thm,eq-1}
\begin{split}
\frac{2M}{5} \sum_{i=0}^{\tau_k -1} \|\widehat{x}_{i+1}^k - x_i^k\|_2^2 \le 
&\ 12\tau_k \overline{\epsilon} + 12\left( \Phi(x_0^k) - \Phi(x_{\tau_k}^k) \right) + \tau_k a_k \\
& + b_k \sum_{i=0}^{\tau_k -1} \|x_i^k - x_0^k\|_2^2 - C \sum_{i=0}^{\tau_k -1} \|x_{i+1}^k- x_i^k\|_2^2.
\end{split}
\end{equation}
For any $i=1,...,\tau_k-1$, Cauchy-Schwarz and triangle inequality give
\begin{equation*}
\|x_i^k - x_0^k\|_2^2 \le \Bigl( \sum_{j=0}^{i-1} \| x_{j+1}^k- x_j^k \|_2 \Bigr)^2 \le i \sum_{j=0}^{i-1} \| x_{j+1}^k- x_j^k \|_2^2.
\end{equation*}
If $\tau_k\ge 2$, summing the previous inequality over $i=1,...,\tau_k-1$ gives
\begin{equation*}
\begin{split}
& \sum_{i=0}^{\tau_k -1} \|x_i^k - x_0^k\|_2^2 = \sum_{i=1}^{\tau_k -1} \|x_i^k - x_0^k\|_2^2 \le \sum_{i=1}^{\tau_k -1} \sum_{j=0}^{i-1} i \| x_{j+1}^k- x_j^k \|_2^2 \\
& \le \sum_{i=1}^{\tau_k-1} \sum_{j=0}^{\tau_k-2} i \| x_{j+1}^k- x_j^k \|_2^2 = \frac{(\tau_k-1)\tau_k}{2} \sum_{j=0}^{\tau_k-2} \| x_{j+1}^k- x_j^k \|_2^2 \le \frac{\tau_k^2}{2} \sum_{j=0}^{\tau_k-1} \| x_{j+1}^k- x_j^k \|_2^2.
\end{split}
\end{equation*}
If $\tau_k=1$, the inequality above trivially holds. In either case, we have
\begin{equation*}
\sum_{i=0}^{\tau_k -1} \|x_i^k - x_0^k\|_2^2 \le \frac{\tau_k^2}{2} \sum_{j=0}^{\tau_k-1} \| x_{j+1}^k- x_j^k \|_2^2.
\end{equation*}
Note that $b_k\ge 0$ and $C= 12 (\frac{M}{2} - l_f L_g) \ge 18 l_f L_g \ge b_k \cdot\frac{\tau_k^2}{2}$, we have\footnote{
{\color{blue}
It remains to check $b_k \cdot\frac{\tau_k^2}{2} \le 18 l_f L_g$. This can be seen from (\ref{eq: thm-unified-thm,eq-1}):
\begin{equation*}
\frac{\tau_k^2}{2} b_k = \frac{\tau_k^2}{2} \left(50 l_f \gamma_2+ \frac{12 l_f L_g}{\tau_k^2} + 38\frac{l_f}{L_g} \lambda_1^2 \right) \le 6l_f L_g + 6l_f L_g + 6l_f L_g .
\end{equation*}
}
}
\begin{equation}
\label{eq: proof-thm-unified-thm,eq-2}
b_k \sum_{i=0}^{\tau_k -1} \|x_i^k - x_0^k\|_2^2 \le b_k \cdot\frac{\tau_k^2}{2} \sum_{j=0}^{\tau_k-1} \| x_{j+1}^k- x_j^k \|_2^2 \le C \sum_{i=0}^{\tau_k-1} \| x_{i+1}^k- x_i^k \|_2^2.
\end{equation}
For all $k=0,...,K-1$ on $\mathcal{E}$, combining~\eqref{eq: proof-thm-unified-thm,eq-1} and~\eqref{eq: proof-thm-unified-thm,eq-2} leads to
\begin{equation*}
\begin{split}
\frac{2M}{5} \sum_{i=0}^{\tau_k -1} \|\widehat{x}_{i+1}^k - x_i^k\|_2^2 &\le 12\tau_k \overline{\epsilon} + 12\left( \Phi(x_0^k) - \Phi(x_{\tau_k}^k) \right) + \tau_k a_k \\
&= 12\tau_k \overline{\epsilon} + 12\left( \Phi(x_0^k) - \Phi(x_0^{k+1}) \right) + \tau_k ( 50 l_f \gamma_0+ 25 l_f \overline{\alpha}_k \gamma_1 + 38\frac{l_f}{L_g} \lambda_0^2 ) ,
\end{split}
\end{equation*}
where the last step is because $x_0^{k+1} = x_{\tau_k}^k$ (see Algorithm \ref{algo:general-framework}). Finally, we sum over $k=0,..., K-1$, and use the fact that $\overline{\alpha}_k = \frac{25 \gamma_1}{12 L_g} \tau_k^2 \le \frac{25 \gamma_1}{12 L_g} \tau_{\text{max}}^2$: On $\mathcal{E}$, we have
\begin{equation*}
\begin{split}
\frac{2M}{5} \sum_{k=0}^{K-1} \sum_{i=0}^{\tau_k -1} \|\widehat{x}_{i+1}^k - x_i^k\|_2^2 &\le 12 \Sigma_\tau \cdot \overline{\epsilon} + 12\left( \Phi(x_0^0) - \Phi(x_0^{K}) \right) \\
&\quad + \Sigma_\tau \left(50 l_f \gamma_0+ 54 \frac{l_f}{L_g} \tau_{\text{max}}^2 \gamma_1^2 + 38\frac{l_f}{L_g} \lambda_0^2 \right).
\end{split}
\end{equation*}
Multiplying $5M/ (2 \Sigma_\tau)$ on both sides completes the proof.
\end{proof}
}

\begin{proof}[Proof of Theorem \ref{thm:unified-thm-short}]
{\color{blue} Let $\Delta' = \frac{\Delta}{2}$ and}
replace $\Delta$ by {\color{blue} $\Delta'$} in Theorem \ref{thm:unified-thm}, then Theorem \ref{thm:unified-thm} requires (\ref{eq: thm-unified-thm-short,eq-1}), and (\ref{eq: thm-unified-thm,eq-1}) becomes (\ref{eq: thm-unified-thm-short,eq-8}), (\ref{eq: thm-unified-thm-short,eq-9}). Under conditions (\ref{eq: thm-unified-thm-short,eq-3})--(\ref{eq: thm-unified-thm-short,eq-7}), the $5$ terms on the right hand side of (\ref{eq: thm-unified-thm,eq-2}) are all at most $\epsilon/5$. In addition, suppose (\ref{eq: thm-unified-thm-short,eq-2}) holds, i.e.,  $\overline{\delta} \Sigma_\tau \le \frac{\Delta}{2}$, then the probability bound in Theorem \ref{thm:unified-thm} becomes {\color{blue} $1-\overline{\delta} \Sigma_\tau -\Delta' \ge 1-\Delta$}. Therefore, under conditions (\ref{eq: thm-unified-thm-short,eq-1})--(\ref{eq: thm-unified-thm-short,eq-9}), Theorem \ref{thm:unified-thm} gives $\frac{1}{\Sigma_\tau} \sum_{k=0}^{K-1} \sum_{i=0}^{\tau_k -1} \| \mathcal{G}_M(x_i^k) \|_2 ^2 \le \epsilon$ with probability at least $1-\Delta$.
\end{proof}

\subsection{Sample Derivations of Corollaries \ref{cor:sample-mean-plus-standard} and \ref{cor:sample-mean-plus-standard-bounds}} \label{sec:sample-derivations}
We provide the direct calculations of the claimed guarantees for $\mathtt{estimator}_1$ discussed in Section~\ref{subsec:vr-corollaries}. 
To do this, we first introduce a needed concentration inequality (Section~\ref{subsubsec:concentration-ineq}) and technical bounds related to establishing $\mathtt{estimator}_1$ satisfies Assumption~\ref{assumption:general-estimation-error-bounds} (Section~\ref{subsubsec:technical-lemmas}). From these calculations, Corollaries \ref{cor:sample-mean-plus-standard} and \ref{cor:sample-mean-plus-standard-bounds} both follow (Section~\ref{subsubsec:proof-of-corollaries}). 
The derivations of the remaining corollaries in Section \ref{subsec:vr-corollaries} are in the technical report~\cite{wu2024unifiedtheory}.

\subsubsection{Concentration Inequality} \label{subsubsec:concentration-ineq}

\begin{lemma}[Matrix Bernstein]
\label{lemma:Matrix-Bernstein-corollary}
Let $X_1,...,X_n$ be independent random matrices of common dimension $d_1 \times d_2$. Assume $\mathbb{E}[X_k]=0$ and $\|X_k\|_{\rm{op}} \le L$ for each $k=1,...,n$ where $L$ is some constant. If $n\ge \frac{4}{9} \log \left( 
\frac{d_1+ d_2}{\delta} \right)$ for some $\delta\in (0,1)$, then
\begin{equation*}
\left\| \frac{1}{n} \sum_{k=1}^n X_k \right\|_{\rm{op}} \le \frac{2L}{\sqrt{n}} \sqrt{\log \left( \frac{d_1+ d_2}{\delta} \right)}
\end{equation*}
holds with probability at least $1-\delta$.
\end{lemma}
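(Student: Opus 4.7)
The plan is to apply the standard matrix Bernstein inequality (in the form due to Tropp) as a black box to the sum $S = \sum_{k=1}^n X_k$, and then show that under the stated lower bound on $n$ the exponential tail bound specializes to the clean rate $2L \sqrt{n \log((d_1+d_2)/\delta)}$. Matrix Bernstein states that for independent, mean-zero random matrices $X_k \in \mathbb{R}^{d_1 \times d_2}$ with $\|X_k\|_{\mathrm{op}} \le L$, one has
\begin{equation*}
\Prob{\|S\|_{\mathrm{op}} \ge t} \ \le\ (d_1 + d_2)\exp\!\left(\frac{-t^2/2}{v(S) + Lt/3}\right),
\end{equation*}
where $v(S) = \max\{\|\mathbb{E}[SS^\top]\|_{\mathrm{op}},\ \|\mathbb{E}[S^\top S]\|_{\mathrm{op}}\}$ is the matrix variance.

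First I would bound $v(S)$. By independence and mean-zero-ness the cross terms vanish, and the uniform operator norm bound gives $\|\mathbb{E}[X_k X_k^\top]\|_{\mathrm{op}} \le L^2$ and $\|\mathbb{E}[X_k^\top X_k]\|_{\mathrm{op}} \le L^2$ termwise, so $v(S) \le n L^2$ after invoking the triangle inequality for the operator norm. Next I would choose $t = 2L\sqrt{n \log((d_1+d_2)/\delta)}$, which is exactly the threshold needed to turn the desired conclusion $\|S/n\|_{\mathrm{op}} \le \tfrac{2L}{\sqrt{n}}\sqrt{\log((d_1+d_2)/\delta)}$ into $\|S\|_{\mathrm{op}} \le t$.

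The only real content is then to verify that the Bernstein denominator is dominated by its variance part. With the chosen $t$, the subgaussian term $v(S) \le nL^2$ and the linear term $Lt/3 = \tfrac{2L^2}{3}\sqrt{n \log((d_1+d_2)/\delta)}$ would both be visible in the denominator. Using the hypothesis $n \ge \tfrac{4}{9}\log((d_1+d_2)/\delta)$, which rearranges to $\sqrt{n\log((d_1+d_2)/\delta)} \le \tfrac{3n}{2}$, the linear term is at most $nL^2$. Hence the denominator is at most $2nL^2$, and the exponent simplifies to at most $-\log((d_1+d_2)/\delta)$, so the tail is at most $(d_1+d_2)\cdot \tfrac{\delta}{d_1+d_2} = \delta$.

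The main obstacle, if there is one, is simply verifying that the calibration of $t$ is consistent with the sample-size hypothesis so that the linear Bernstein term is absorbed into the Gaussian one; this is the reason the specific constant $4/9$ appears (it is tuned so the linear term is at most the variance term). Once that observation is in hand, the lemma follows immediately by rearranging into the $1/n$ normalization.
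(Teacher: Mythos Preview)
Your proposal is correct and follows essentially the same approach as the paper: apply Tropp's matrix Bernstein inequality, bound the matrix variance by $nL^2$, set $t = 2L\sqrt{n\log((d_1+d_2)/\delta)}$, and use the hypothesis $n \ge \tfrac{4}{9}\log((d_1+d_2)/\delta)$ to absorb the linear $Lt/3$ term into the variance term so the denominator is at most $2nL^2$. The paper's proof is line-for-line the same computation.
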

\begin{proof}[Proof of Lemma \ref{lemma:Matrix-Bernstein-corollary}]
Using a classic Matrix Bernstein bound (see \cite{Tropp2015}), one has 
\begin{equation}\label{eq:MatrixBernstein}
\mathbb{P} \left( \left\| \sum_{k=1}^n X_k \right\|_{\rm{op}} \ge t \right) \le (d_1+d_2) \cdot \exp \left( \frac{-t^2/2} {V+Lt/3} \right),
\end{equation}
where $V := \max \left\{ \left\| \sum_{k=1}^n \mathbb{E} \left[X_k X_k^T\right] \right\|_{\rm{op}}, \left\| \sum_{k=1}^n \mathbb{E} \left[X_k^T X_k\right] \right\|_{\rm{op}} \right\}$. By some basic properties,
\begin{equation*}
\left\| \sum_{k=1}^n \mathbb{E} \Bigl[X_k X_k^T\Bigr] \right\|_{\rm{op}} \le \sum_{k=1}^n \mathbb{E} \left\| X_k X_k^T \right\|_{\rm{op}} \le \sum_{k=1}^n \mathbb{E} \Bigl[ \bigl\| X_k \bigr\|_{\rm{op}} \cdot \bigl\| X_k^T \bigr\|_{\rm{op}} \Bigr] \le n L^2.
\end{equation*}
Similarly, $\left\| \sum_{k=1}^n \mathbb{E} \left[X_k^T X_k\right] \right\|_{\rm{op}} \le n L^2$. So the $V$ defined above is at most $n L^2$. Consequently,
\begin{equation*}
\mathbb{P} \left( \left\| \sum_{k=1}^n X_k \right\|_{\rm{op}} \ge t \right) \le (d_1+d_2) \cdot \exp \left( \frac{-t^2/2} {V+Lt/3} \right) \le (d_1+d_2) \cdot \exp \left( \frac{-t^2/2} {n L^2+ Lt/3} \right)
\end{equation*}
for any $t\ge 0$. 
With $t= 2L \sqrt{n \log \left( \frac{d_1+ d_2}{\delta} \right)}$, note that $\frac{1}{3} Lt = \frac{2 L^2}{3} \sqrt{n\log \left( \frac{d_1+ d_2}{\delta} \right)} \le nL^2$, where the last step is from the assumption that $n\ge \frac{4}{9} \log \left(  \frac{d_1+ d_2}{\delta} \right)$. The claim then follows directly from~\eqref{eq:MatrixBernstein}.
\end{proof}

\subsubsection{Technical Bounds for Estimators} \label{subsubsec:technical-lemmas}

\begin{proposition}
\label{prop:error-bound-i=0-sample-mean}
Suppose Assumption \ref{assumption:variance-like} holds. For an arbitrary fixed pair $(k,i)\in \mathcal{I}(K, \boldsymbol{\tau} )$ and any $\delta\in (0,1)$, assume $\widetilde{g}_i^k$ and $\widetilde{J}_i^k$ are constructed by (\ref{eq:mini-batch}). (i) If $A\ge \frac{4}{9} \log \left( \frac{m+ 1}{\delta} \right)$, then the following holds with probability at least $1-\delta$,
\begin{equation*}
\left\| \widetilde{g}_i^k - g(x_i^k) \right\|_2 \le \frac{2 \sigma_g}{\sqrt{A}} \sqrt{\log \left( \frac{m+ 1}{\delta} \right)} ;
\end{equation*}
(ii) If $B\ge \frac{4}{9}\log \left( \frac{m+ n}{\delta} \right)$, then the following holds with probability at least $1-\delta$,
\begin{equation*}
\left\| \widetilde{J}_i^k - g'(x_i^k) \right\|_{\rm{op}} \le \frac{2 \sigma_{g'} }{\sqrt{B}} \sqrt{\log \left( \frac{m+ n}{\delta} \right)} .
\end{equation*}
\end{proposition}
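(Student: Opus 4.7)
The plan is to apply the Matrix Bernstein corollary (Lemma~\ref{lemma:Matrix-Bernstein-corollary}) in both parts, handling (i) by viewing vectors in $\mathbb{R}^m$ as matrices of dimension $m \times 1$ (so that the operator norm coincides with the Euclidean norm) and handling (ii) directly for matrices of dimension $m \times n$. The centered summands in each case are bounded uniformly thanks to Assumption~\ref{assumption:variance-like}, so both hypotheses of the matrix Bernstein lemma are met essentially for free.

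For part (i), I would define $X_\xi := g_\xi(x_i^k) - g(x_i^k)$ for $\xi \in \mathcal{A}_i^k$. These are i.i.d., mean zero (since $g(x) = \mathbb{E}_\xi g_\xi(x)$), and satisfy $\|X_\xi\|_{\mathrm{op}} = \|X_\xi\|_2 \le \sigma_g$ by Assumption~\ref{assumption:variance-like}. Writing $\widetilde{g}_i^k - g(x_i^k) = \frac{1}{A}\sum_{\xi \in \mathcal{A}_i^k} X_\xi$ and invoking Lemma~\ref{lemma:Matrix-Bernstein-corollary} with $d_1 = m$, $d_2 = 1$, $n = A$, and $L = \sigma_g$, the condition $A \ge \frac{4}{9}\log((m+1)/\delta)$ matches the lemma's requirement and the resulting bound is exactly the claim.

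For part (ii), I would apply the same argument to the matrix-valued summands $Y_\xi := g'_\xi(x_i^k) - g'(x_i^k)$ for $\xi \in \mathcal{B}_i^k$, which are i.i.d., mean zero, and satisfy $\|Y_\xi\|_{\mathrm{op}} \le \sigma_{g'}$ again by Assumption~\ref{assumption:variance-like}. Then $\widetilde{J}_i^k - g'(x_i^k) = \frac{1}{B}\sum_{\xi \in \mathcal{B}_i^k} Y_\xi$, so Lemma~\ref{lemma:Matrix-Bernstein-corollary} with $d_1 = m$, $d_2 = n$, $n = B$, and $L = \sigma_{g'}$ yields the stated bound under the given sample-size condition.

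There is no real obstacle here: both parts are a direct substitution into the already-proven Matrix Bernstein corollary, and no new calculation is required beyond identifying the summands and invoking the right dimensions. The only minor subtlety worth flagging in the write-up is the identification of the Euclidean norm with the operator norm for column-vector matrices in part (i), which is why the factor $(m+1)$ (rather than $(m+n)$) appears inside the logarithm.
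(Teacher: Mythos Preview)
Your proposal is correct and follows essentially the same route as the paper: apply Lemma~\ref{lemma:Matrix-Bernstein-corollary} to the centered summands, treating vectors as $m\times 1$ matrices in part~(i) so that the operator and Euclidean norms coincide. The one point the paper makes explicit that you gloss over is that $x_i^k$ is itself random (it depends on earlier samples), so the summands $g_\xi(x_i^k)-g(x_i^k)$ are only independent and mean-zero \emph{conditionally} on the iterate history $\mathbf{x_i^k}=\{x_0^0,\dots,x_i^k\}$; the paper applies the Bernstein bound conditionally and then removes the conditioning via the law of total expectation, which goes through because the bounds $\sigma_g$ and $\sigma_{g'}$ from Assumption~\ref{assumption:variance-like} are uniform in $x$.
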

\begin{proof}[Proof of Proposition \ref{prop:error-bound-i=0-sample-mean}]
We first prove part (i). From the construction of $\widetilde{g}_i^k$ in (\ref{eq:mini-batch}), $\widetilde{g}_i^k - g(x_i^k) = \frac{1}{A} \sum_{\xi\in \mathcal{A}_i^k} \left( g_{\xi}(x_i^k) - g(x_i^k) \right)$. Suppose $\mathcal{A}_i^k = \{\xi_1,..., \xi_A\}$, then $\xi_1,..., \xi_A$ are  independently drawn from distribution $D$. For each $r=1,..., A$, denote $Y_r= g_{\xi_r} (x_i^k) - g(x_i^k)$. So $\widetilde{g}_i^k - g(x_i^k) = \frac{1}{A} \sum_{r=1}^A Y_r$.

Use $\mathbf{x_i^k}$ to denote the sequence of iterates $\{x_0^0, x_1^0, ..., x_i^k\}$ in the rest of this proof. Then if conditioning on $\mathbf{x_i^k}$, all the randomness at the $(k,i)$-th  iteration comes from the sampling of $\mathcal{A}_i^k$. So $Y_1,..., Y_A$ are independent conditioning on $\mathbf{x_i^k}$. Since $g= \mathbb{E}_{\xi\sim D} [g_{\xi}]$, we immediately have $\mathbb{E} [Y_r| \mathbf{x_i^k}] = 0$. By Assumption \ref{assumption:variance-like}, $\sigma_g$ is a constant upper bound of $\|Y_r\|_2$. Note $Y_r$ is an $m\times 1$ matrix, $\|Y_r\|_{\rm{op}} = \|Y_r\|_2$. Then we can apply Lemma \ref{lemma:Matrix-Bernstein-corollary} to $Y_1,..., Y_A$ (conditioning on $\mathbf{x_i^k}$): if $A\ge \frac{4}{9} \log(\frac{m+1}{\delta})$ for some $\delta\in (0,1)$, then 
\begin{equation*}
\mathbb{P} \left( \|\widetilde{g}_i^k - g(x_i^k)\|_2 \ge t \mid \mathbf{x_i^k} \right) \le \delta
\end{equation*}
where $t= \frac{2 \sigma_g}{\sqrt{A}} \sqrt{\log \left( \frac{m+ 1}{\delta} \right)}$. This implies the unconditional probability is also upper bounded by $\delta$,\footnote{
To see this, we can rewrite all the probabilities as the expectations of corresponding indicator functions, then use the law of total expectation. Let $\mathbf{I} = 1$ if $\| \widetilde{g}_i^k - g(x_i^k) \|_2 \ge t$, and $\mathbf{I} = 0$ otherwise. It follows that
\begin{equation*}
\label{eq: proof-prop-error-bound-i=0-sample-mean,eq-1}
\mathbb{P} \left( \left\| \widetilde{g}_i^k - g(x_i^k) \right\|_2 \ge t \right) = \mathbb{E}\left[ \mathbf{I} \right] = \mathbb{E}\left[ \mathbb{E}\left[ \mathbf{I} \mid \mathbf{x_i^k} \right] \right] = \mathbb{E}\left[ \mathbb{P}\left( \left\| \widetilde{g}_i^k - g(x_i^k) \right\|_2 \ge t \mid \mathbf{x_i^k} \right) \right] \le \mathbb{E}\left[ \delta \right] = \delta .
\end{equation*}
}
i.e.,
\begin{equation*}
\mathbb{P} \left( \|\widetilde{g}_i^k - g(x_i^k)\|_2 \ge t \right) \le \delta
\end{equation*}
which finishes the proof for part (i).

The proof for part (ii) is similar. Suppose $\mathcal{B}_0^k = \{\xi'_1,..., \xi'_B\}$, and denote $Z_r= g'_{\xi'_r} (x_i^k) - g'(x_i^k)$ for each $r=1,..., B$. Then $\widetilde{J}_i^k - g'(x_i^k) = \frac{1}{B} \sum_{r=1}^B Z_r$. Applying Lemma \ref{lemma:Matrix-Bernstein-corollary} to $Z_1,..., Z_B$ (conditioning on $\mathbf{x_i^k}$) finishes the proof, since $\|Z_r\|_{\rm{op}} \le \sigma_{g'}$.
\end{proof}

\begin{proposition}
\label{prop:error-bound-g-standard}
{\color{blue} Suppose Assumption~\ref{assumption:uniform-Lipschitz} holds.} For an arbitrary fixed pair $(k,i)\in \mathcal{I}(K, \boldsymbol{\tau} )$ and any $\delta\in (0,1)$, if $\widetilde{g}_i^k$ is constructed by (\ref{eq:sample-mean-plus-standard}) or (\ref{eq:exact-eval-plus-standard}), and $a\ge \frac{4}{9} \log \left( \frac{m+ 1}{\delta} \right)$, then the following holds with probability at least $1-\delta$:
\begin{equation*}
\left\| \widetilde{g}_i^k - g(x_i^k) \right\|_2 \le \left\| \widetilde{g}_0^k - g(x_0^k) \right\|_2 + {\color{blue} \frac{4 \widehat{l}_g}{\sqrt{a}} \sqrt{\log \left( \frac{m+ 1}{\delta} \right)} \|x_i^k -x_0^k\|_2 }.
\end{equation*}
\end{proposition}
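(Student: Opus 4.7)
The plan is to decompose the estimation error $\widetilde{g}_i^k - g(x_i^k)$ into two pieces so that the standard variance-reduction cancellation becomes visible, and then apply the matrix Bernstein bound of Lemma~\ref{lemma:Matrix-Bernstein-corollary} to the centered piece. Concretely, starting from the definition of $\widetilde{g}_i^k$ in either~\eqref{eq:sample-mean-plus-standard} or~\eqref{eq:exact-eval-plus-standard}, I would add and subtract $g(x_0^k)$ and $g(x_i^k)$ to write
\begin{equation*}
\widetilde{g}_i^k - g(x_i^k) = \underbrace{\Bigl(\widetilde{g}_0^k - g(x_0^k)\Bigr)}_{\text{epoch-start error}} + \frac{1}{a}\sum_{\xi \in \mathcal{A}_i^k} \underbrace{\Bigl( g_\xi(x_i^k) - g_\xi(x_0^k) - (g(x_i^k) - g(x_0^k)) \Bigr)}_{=:\,Y_\xi} .
\end{equation*}
A single triangle inequality then isolates the $\|\widetilde{g}_0^k - g(x_0^k)\|_2$ summand that appears on the right-hand side of the claim, leaving only a tail bound on $\bigl\|\tfrac{1}{a}\sum_{\xi} Y_\xi\bigr\|_2$ to establish.

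For the tail bound, I would condition on the iterate history $\mathbf{x_i^k} = \{x_0^0, \dots, x_i^k\}$, exactly as was done in the proof of Proposition~\ref{prop:error-bound-i=0-sample-mean}, so that $\mathcal{A}_i^k = \{\xi_1,\dots,\xi_a\}$ is an i.i.d.\ sample from $D$ independent of the conditioning. The $l_g$-Lipschitz continuity of each $g_\xi$ (Proposition~\ref{prop:g-and-g'-lip} applied termwise) gives $\|g_\xi(x_i^k) - g_\xi(x_0^k)\|_2 \le l_g\|x_i^k - x_0^k\|_2$, and the same bound holds for $g(x_i^k) - g(x_0^k) = \mathbb{E}_\xi[g_\xi(x_i^k) - g_\xi(x_0^k)]$. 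Thus each $Y_{\xi_r}$ is a mean-zero $m \times 1$ ``matrix'' with operator norm (equal to $\ell_2$ norm) bounded by $2 l_g \|x_i^k - x_0^k\|_2$. (In the finite-sum case~\eqref{eq:exact-eval-plus-standard} the same computation goes through with sampling from $\{1,\dots,N\}$ with replacement, so a single argument covers both estimators.)

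Under the hypothesis $a \ge \tfrac{4}{9}\log(\tfrac{m+1}{\delta})$, Lemma~\ref{lemma:Matrix-Bernstein-corollary} applied with $d_1 = m$, $d_2 = 1$, and $L = 2 l_g \|x_i^k - x_0^k\|_2$ yields, with conditional probability at least $1-\delta$,
\begin{equation*}
\left\|\frac{1}{a}\sum_{\xi \in \mathcal{A}_i^k} Y_\xi\right\|_2 \le \frac{2 \cdot 2 l_g \|x_i^k - x_0^k\|_2}{\sqrt{a}} \sqrt{\log\!\left(\frac{m+1}{\delta}\right)} = \frac{4 l_g}{\sqrt{a}} \sqrt{\log\!\left(\frac{m+1}{\delta}\right)} \|x_i^k - x_0^k\|_2 .
\end{equation*}
The same tower-of-expectations argument used in the footnote of Proposition~\ref{prop:error-bound-i=0-sample-mean} then removes the conditioning, and combining with the triangle inequality above gives the stated bound. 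I do not anticipate any serious obstacle: the only delicate point is making sure that the bound $L$ passed to matrix Bernstein is measurable with respect to the conditioning $\mathbf{x_i^k}$ (which it is, since both $x_i^k$ and $x_0^k$ are determined by $\mathbf{x_i^k}$), so that the conditional application is legitimate.
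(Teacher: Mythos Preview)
Your proposal is correct and follows essentially the same approach as the paper: the identical decomposition $\widetilde{g}_i^k - g(x_i^k) = (\widetilde{g}_0^k - g(x_0^k)) + \frac{1}{a}\sum_r Y_r$, the same bound $\|Y_r\|_{\rm op} \le 2 l_g \|x_i^k - x_0^k\|_2$, and the same application of Lemma~\ref{lemma:Matrix-Bernstein-corollary}. Your write-up is in fact more explicit than the paper's about the conditioning and de-conditioning steps, but the underlying argument is the same.
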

\begin{proof}[Proof of Proposition \ref{prop:error-bound-g-standard}]
Noting that $\widetilde{g}_i^k - g(x_i^k) = \left( \widetilde{g}_0^k - g(x_0^k) \right) + \frac{1}{a} \sum_{r=1}^a Y_r$ where $Y_r=g_{\xi_r}(x_i^k) - g_{\xi_r}(x_0^k) - g(x_i^k) + g(x_0^k)$ for each $r=1,...,a$, it suffices to bound $\|\sum_{r=1}^a Y_r\|_2$ and then apply the triangle inequality. The claimed bound then follows directly from Lemma~\ref{lemma:Matrix-Bernstein-corollary} as {\color{blue} $\|Y_r\|_{\rm{op}} \leq 2\widehat{l}_g \|x_i^k - x_0^k\|_2$}.
\end{proof}

\begin{proposition}
\label{prop:error-bound-J}
{\color{blue} Suppose Assumption~\ref{assumption:uniform-Lipschitz} holds.} For an arbitrary fixed pair $(k,i)\in \mathcal{I}(K, \boldsymbol{\tau} )$ and any $\delta\in (0,1)$, if $\widetilde{J}_i^k$ is constructed by any method among (\ref{eq:sample-mean-plus-standard}), (\ref{eq:sample-mean-plus-smart}), (\ref{eq:exact-eval-plus-standard}), (\ref{eq:exact-eval-plus-smart}), and $b\ge \frac{4}{9} \log \left( \frac{m+ n}{\delta} \right)$, then the following holds with probability at least $1-\delta$:
\begin{equation*}
\left\| \widetilde{J}_i^k - g'(x_i^k) \right\|_{\rm{op}} \le \left\| \widetilde{J}_0^k - g'(x_0^k) \right\|_{\rm{op}} + {\color{blue} \frac{4 \widehat{L}_g}{\sqrt{b}} \sqrt{\log \left( \frac{m+ n}{\delta} \right)} \|x_i^k -x_0^k\|_2 }.
\end{equation*}
\end{proposition}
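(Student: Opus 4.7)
The plan is to mirror the argument for Proposition~\ref{prop:error-bound-g-standard}, replacing the vector Matrix Bernstein application with its genuinely matrix-valued counterpart. The key observation is that all four candidate constructions (\ref{eq:sample-mean-plus-standard}), (\ref{eq:sample-mean-plus-smart}), (\ref{eq:exact-eval-plus-standard}), (\ref{eq:exact-eval-plus-smart}) share the same structural form for the Jacobian estimate, namely
\begin{equation*}
    \widetilde{J}_i^k \;=\; \widetilde{J}_0^k \;+\; \frac{1}{b}\sum_{\xi\in\mathcal{B}_i^k}\Bigl(g'_\xi(x_i^k)-g'_\xi(x_0^k)\Bigr).
\end{equation*}
This uniformity means a single proof covers all four cases simultaneously: the specific choice of $\widetilde{J}_0^k$ and of $\widetilde{g}$-updates is irrelevant, since the bound is stated relative to $\|\widetilde{J}_0^k - g'(x_0^k)\|_{\mathrm{op}}$.

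First, I would rewrite
\begin{equation*}
    \widetilde{J}_i^k - g'(x_i^k) \;=\; \bigl(\widetilde{J}_0^k - g'(x_0^k)\bigr) \;+\; \frac{1}{b}\sum_{r=1}^{b} Z_r,
\end{equation*}
where enumerating $\mathcal{B}_i^k = \{\xi'_1,\dots,\xi'_b\}$, we set
\begin{equation*}
    Z_r \;:=\; g'_{\xi'_r}(x_i^k) - g'_{\xi'_r}(x_0^k) - \bigl(g'(x_i^k)-g'(x_0^k)\bigr).
\end{equation*}
Conditioning on the history $\mathbf{x}_i^k := \{x_0^0,\dots,x_i^k\}$ and the randomness used to form $\widetilde{J}_0^k$, the samples $\xi'_1,\dots,\xi'_b$ are drawn independently (from $D$ in the expectation cases (\ref{eq:sample-mean-plus-standard})/(\ref{eq:sample-mean-plus-smart}), or uniformly with replacement from $\{1,\dots,N\}$ in the finite-sum cases (\ref{eq:exact-eval-plus-standard})/(\ref{eq:exact-eval-plus-smart})), so the $Z_r$ are conditionally independent. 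Unbiasedness $\mathbb{E}[Z_r \mid \mathbf{x}_i^k]=0$ is immediate from the definition of $g'$ as the relevant expectation or finite average.

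Next, the $L_{g,\xi}$-Lipschitz continuity of each $g'_\xi$ and the $L_g$-Lipschitz continuity of $g'$ (Proposition~\ref{prop:g-and-g'-lip}) yield the uniform operator-norm bound
\begin{equation*}
    \|Z_r\|_{\mathrm{op}} \;\le\; \|g'_{\xi'_r}(x_i^k)-g'_{\xi'_r}(x_0^k)\|_{\mathrm{op}} + \|g'(x_i^k)-g'(x_0^k)\|_{\mathrm{op}} \;\le\; 2L_g\,\|x_i^k - x_0^k\|_2.
\end{equation*}
Since $Z_r \in \mathbb{R}^{m\times n}$, I would apply Lemma~\ref{lemma:Matrix-Bernstein-corollary} conditionally on $\mathbf{x}_i^k$ with $d_1=m$, $d_2=n$, and $L = 2L_g\|x_i^k-x_0^k\|_2$. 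The hypothesis $b\ge \tfrac{4}{9}\log((m+n)/\delta)$ is exactly the sample-size requirement of that lemma, yielding
\begin{equation*}
    \Bigl\|\tfrac{1}{b}\sum_{r=1}^{b} Z_r\Bigr\|_{\mathrm{op}} \;\le\; \tfrac{4L_g}{\sqrt{b}}\sqrt{\log((m+n)/\delta)}\,\|x_i^k-x_0^k\|_2
\end{equation*}
with conditional probability at least $1-\delta$. A total-expectation (tower) argument, exactly as in the footnote of Proposition~\ref{prop:error-bound-i=0-sample-mean}, promotes this to an unconditional bound. A final triangle inequality on the decomposition gives the stated estimate.

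The proof is genuinely routine once one notes the shared structural form across the four estimators; the only subtlety worth flagging is that the conditioning must include whatever auxiliary randomness was used to construct $\widetilde{J}_0^k$ (e.g., $\mathcal{B}_0^k$ in the expectation estimators), but since independence of the sampling at epoch index $i$ from the sampling at $i=0$ holds by construction, this does not affect the application of Matrix Bernstein.
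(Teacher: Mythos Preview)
Your proposal is correct and follows essentially the same approach as the paper: decompose $\widetilde{J}_i^k - g'(x_i^k)$ into $(\widetilde{J}_0^k - g'(x_0^k))$ plus a mean of centered terms $Z_r$, bound $\|Z_r\|_{\mathrm{op}}\le 2L_g\|x_i^k-x_0^k\|_2$ via Lipschitz continuity, apply Lemma~\ref{lemma:Matrix-Bernstein-corollary}, and finish with the triangle inequality. Your write-up is in fact more explicit than the paper's terse version about the conditioning and the tower argument.
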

\begin{proof}[Proof of Proposition \ref{prop:error-bound-J}]
Noting that $\widetilde{J}_i^k - g'(x_i^k) = \left( \widetilde{J}_0^k - g'(x_0^k) \right) + \frac{1}{b} \sum_{r=1}^b Z_r$ where $Z_r = g'_{\xi_r}(x_i^k) - g'_{\xi_r}(x_0^k) - g'(x_i^k) + g'(x_0^k)$ for each $r=1,...,b$, it suffices to bound $\|\sum_{r=1}^b Z_r\|_{\rm{op}}$ and then apply the triangle inequality. The claimed bound then follows directly from Lemma~\ref{lemma:Matrix-Bernstein-corollary} as {\color{blue} $\|Z_r\|_{\rm{op}} \leq 2\widehat{L}_g \|x_i^k - x_0^k\|_2$}.
\end{proof}

\subsubsection{Proofs of Corollaries \ref{cor:sample-mean-plus-standard} and \ref{cor:sample-mean-plus-standard-bounds}} \label{subsubsec:proof-of-corollaries}
Using the technical propositions above, we can prove the lemmas and corollaries for $\mathtt{estimator}_1$ claimed in Section~\ref{subsec:vr-corollaries}.


\begin{proof}[Proof of Lemma \ref{lemma:sample-mean-plus-standard}]
For any $K\in \mathbb{N}_+$, $\boldsymbol{\tau} \in \mathbb{N}_+^K$ and $\Delta \in (0,1)$, let $\delta = \frac{\Delta}{2 \Sigma_\tau}$. By Proposition \ref{prop:error-bound-i=0-sample-mean}, for an arbitrary $k\in \{0,..., K-1\}$, any $A\ge \frac{4}{9} \log \left( \frac{m+ 1}{\delta} \right)$ and any $B\ge \frac{4}{9}\log \left( \frac{m+ n}{\delta} \right)$, the following two inequalities hold with probability at least $1- 2\delta$,
\begin{equation}
\label{proof-lemma-sample-mean-plus-standard,eq-1}
\left\| \widetilde{g}_0^k - g(x_0^k) \right\|_2 \le \frac{2 \sigma_g}{\sqrt{A}} \sqrt{\log \left( \frac{m+ 1}{\delta} \right)} , \quad \left\| \widetilde{J}_0^k - g'(x_0^k) \right\|_{\rm{op}} \le \frac{2 \sigma_{g'} }{\sqrt{B}} \sqrt{\log \left( \frac{m+ n}{\delta} \right)} .
\end{equation}
By Proposition \ref{prop:error-bound-g-standard}, for an arbitrary $(k,i)\in \mathcal{I}(K, \boldsymbol{\tau} )$ and any $a\ge \frac{4}{9} \log \left( \frac{m+ 1}{\delta} \right)$, the following holds with probability at least $1-\delta$:
\begin{equation}
\label{proof-lemma-sample-mean-plus-standard,eq-2}
\left\| \widetilde{g}_i^k - g(x_i^k) \right\|_2 \le \left\| \widetilde{g}_0^k - g(x_0^k) \right\|_2 + {\color{blue} \frac{4 \widehat{l}_g}{\sqrt{a}} \sqrt{\log \left( \frac{m+ 1}{\delta} \right)} \|x_i^k -x_0^k\|_2 }.
\end{equation}
By Proposition \ref{prop:error-bound-J}, for an arbitrary $(k,i)\in \mathcal{I}(K, \boldsymbol{\tau} )$ and any $b\ge \frac{4}{9} \log \left( \frac{m+ n}{\delta} \right)$, the following holds with probability at least $1-\delta$:
\begin{equation}
\label{proof-lemma-sample-mean-plus-standard,eq-3}
\left\| \widetilde{J}_i^k - g'(x_i^k) \right\|_{\rm{op}} \le \left\| \widetilde{J}_0^k - g'(x_0^k) \right\|_{\rm{op}} + {\color{blue} \frac{4 \widehat{L}_g}{\sqrt{b}} \sqrt{\log \left( \frac{m+ n}{\delta} \right)} \|x_i^k -x_0^k\|_2 }.
\end{equation}
Let $\mathcal{C}(K, \boldsymbol{\tau}, \Delta) = \{(A,B,a,b) \in \mathbb{N}_+^4 : A,a \ge \frac{4}{9} \log ( \frac{2(m+ 1) \Sigma_\tau}{\Delta} ), \text{and } B,b \ge \frac{4}{9} \log ( \frac{2(m+ n) \Sigma_\tau}{\Delta} )\}$. Then for any $(A,B,a,b) \in \mathcal{C}(K, \boldsymbol{\tau}, \Delta)$, by using a union probability bound, (\ref{proof-lemma-sample-mean-plus-standard,eq-1}), (\ref{proof-lemma-sample-mean-plus-standard,eq-2}) and (\ref{proof-lemma-sample-mean-plus-standard,eq-3}) hold for all $(k,i)\in \mathcal{I}(K, \boldsymbol{\tau} )$ with probability at least $1- 2\Sigma_\tau \delta$. Note that $1- 2\Sigma_\tau \delta = 1- \Delta$, so we can set $\gamma_0(K, \boldsymbol{\tau}, \theta, \Delta ) = \frac{2 \sigma_g}{\sqrt{A}} \sqrt{\log ( \frac{2(m+ 1) \Sigma_\tau}{\Delta} )}$, {\color{blue} $\gamma_1(K, \boldsymbol{\tau}, \theta, \Delta ) = \frac{4 \widehat{l}_g}{\sqrt{a}} \sqrt{\log ( \frac{2(m+ 1) \Sigma_\tau}{\Delta} )}$}, $\gamma_2= 0$, $\lambda_0(K, \boldsymbol{\tau}, \theta, \Delta ) = \frac{2 \sigma_{g'}}{\sqrt{B}} \sqrt{\log ( \frac{2(m+ n) \Sigma_\tau}{\Delta} )}$ and {\color{blue} $\lambda_1(K, \boldsymbol{\tau}, \theta, \Delta ) = \frac{4 \widehat{L}_g}{\sqrt{b}} \sqrt{\log ( \frac{2(m+ n) \Sigma_\tau}{\Delta} )}$} to satisfy Assumption \ref{assumption:general-estimation-error-bounds}. 
\end{proof}

\begin{proof}[Proof of Corollary \ref{cor:sample-mean-plus-standard}] 
We can obtain the explicit form of $\mathcal{C}(K, \boldsymbol{\tau}, \Delta)$, $\{\gamma_\ell \}_{\ell=0}^2$ and $\{\lambda_\ell \}_{\ell=0}^1$ from Lemma \ref{lemma:sample-mean-plus-standard}, and plug them into Theorem \ref{thm:unified-thm-short}. Then by Theorem \ref{thm:unified-thm-short}, to get $\frac{1}{K \tau} \sum_{k=0}^{K-1} \sum_{i=0}^{\tau -1} \| \mathcal{G}_M(x_i^k) \|_2 ^2 \le \epsilon$ with probability at least $1-\Delta$, we need $\overline{\delta} \le \Delta/(2 K \tau )$, $\overline{\epsilon} \le \epsilon/(5\cdot 30M)$, and the following inequalities:
\begin{align*}
\begin{cases}
& A,a \ge \frac{4}{9} \log \left( \frac{4(m+ 1) K \tau}{\Delta} \right), \text{and } B,b \ge \frac{4}{9} \log \left( \frac{4(m+ n) K \tau}{\Delta} \right) \\
& K \tau \ge 5\cdot 30M (\Phi(x_0^0) - \Phi^*) / \epsilon \\
& \frac{2 \sigma_g}{\sqrt{A}} \sqrt{\log \left( \frac{4(m+ 1) K \tau}{\Delta} \right)} \le \epsilon/(5\cdot 125 l_f M) \\
& \frac{4 \sigma_{g'}^2}{B} \log \left( \frac{4(m+ n) K \tau}{\Delta} \right) \le L_g \epsilon/(5\cdot 95 l_f M) \\
& \color{blue} \tau^2 \cdot \frac{16 \widehat{l}_g^2}{a} \log \left( \frac{4(m+ 1) K \tau}{\Delta} \right) \le L_g \epsilon/(5\cdot 135 l_f M) \\
& \color{blue} \tau^2 \cdot \frac{16 \widehat{L}_g^2}{b} \log \left( \frac{4(m+ n) K \tau}{\Delta} \right) \le 6 L_g^2 /19
\end{cases}
\end{align*}
which reduce to 
\begin{equation*}
\begin{cases}
K \tau \ge C_{\Sigma} \cdot \epsilon^{-1} \\
A \ge C_A \cdot \epsilon^{-2} \cdot \log \left( \frac{4(m+ 1) K \tau}{\Delta} \right) \\
B \ge C_B \cdot \epsilon^{-1} \cdot \log \left( \frac{4(m+ n) K \tau}{\Delta} \right) \\
\color{blue} a \ge C_a \cdot \tau^2 \cdot \epsilon^{-1} \cdot \log \left( \frac{4(m+ 1) K \tau}{\Delta} \right) \\
\color{blue} b \ge C_b \cdot \tau^2 \cdot \log \left( \frac{4(m+ n) K \tau}{\Delta} \right) 
\end{cases}
\end{equation*}
providing that $1/\epsilon$ is sufficiently large. Here $C_{\Sigma}, C_A, C_B, C_a, C_b$ are some constants.

For any positive integer $\tau$, let $K = \lceil \frac{C_{\Sigma} \cdot \epsilon^{-1}}{\tau} \rceil$, $A = \lceil C_A \cdot \epsilon^{-2} \cdot \log ( \frac{4(m+ 1) K \tau}{\Delta} ) \rceil$, $B = \lceil C_B \cdot \epsilon^{-1} \cdot \log ( \frac{4(m+ n) K \tau}{\Delta} ) \rceil$, {\color{blue} $a = \lceil C_a \cdot \tau^2 \cdot \epsilon^{-1} \cdot \log ( \frac{4(m+ 1) K \tau}{\Delta} ) \rceil$, $b = \lceil C_b \cdot \tau^2 \cdot \log ( \frac{4(m+ n) K \tau}{\Delta} ) \rceil$}, then the conditions above hold for sufficiently small $\epsilon$. So Theorem \ref{thm:unified-thm-short} guarantees that $\frac{1}{K \tau} \sum_{k=0}^{K-1} \sum_{i=0}^{\tau -1} \| \mathcal{G}_M(x_i^k) \|_2 ^2 \le \epsilon$ with probability at least $1-\Delta$.

In (\ref{eq:sample-mean-plus-standard}), at the $(k,0)$-th iteration, we evaluate $g_{\xi}(\cdot)$ for $A$ times and $g'_{\xi}(\cdot)$ for $B$ times. At the $(k,i)$-th iteration (with $i>0$), we evaluate $g_{\xi}(\cdot)$ for $2a$ times and $g'_{\xi}(\cdot)$ for $2b$ times.
Supposing $\tau = O(\epsilon^{-1})$, the oracle complexity for evaluations of $g_{\xi}(\cdot)$ is 
\begin{equation*}
\begin{split}
KA + 2K(\tau-1)a \le KA + 2K\tau a 
&= \widetilde{\Theta} \left( (\epsilon^{-3} \tau^{-1} + \epsilon^{-2} \tau^2) \log(1/\Delta) \right),
\end{split}
\end{equation*}
and the oracle complexity for evaluations of Jacobians $g'_{\xi}(\cdot)$ is 
\begin{equation*}
\begin{split}
KB + 2K(\tau-1)b \le KB + 2K\tau b 
&= \widetilde{\Theta} \left( (\epsilon^{-2} \tau^{-1} + \epsilon^{-1} \tau^2) \log(1/\Delta) \right).
\end{split}
\end{equation*}
\end{proof}

\begin{proof}[Proof of Corollary \ref{cor:sample-mean-plus-standard-bounds}]
Suppose $\tau = \Theta(\epsilon^{ -\beta})$ for some $\beta \ge 0$. Then (\ref{eq: cor-sample-mean-plus-standard,eq-1}) can be simplified as
$$\widetilde{\Theta} ((\epsilon^{-3} \tau^{-1} + \epsilon^{-2} \tau^2) \log(1/\Delta) ) = \widetilde{\Theta} (\epsilon^{- \max\{3-\beta, 2+2\beta\}} \log(1/\Delta) ) ,$$
and (\ref{eq: cor-sample-mean-plus-standard,eq-2}) can be simplified as
$$\widetilde{\Theta} ((\epsilon^{-2} \tau^{-1} + \epsilon^{-1} \tau^2) \log(1/\Delta) ) = \widetilde{\Theta} (\epsilon^{- \max\{2-\beta, 1+2\beta\}} \log(1/\Delta) ) .$$
The asymptotic rates for two bounds are both minimized by $\beta = \frac{1}{3}$. At $\tau = \Theta(\epsilon^{-1/3})$, the two bounds become $\widetilde{\Theta}(\epsilon^{-8/3} \log(1/\Delta))$ and $\widetilde{\Theta}(\epsilon^{-5/3} \log(1/\Delta))$ respectively.
\end{proof}

    \paragraph{Acknowledgements.} This work was supported in part by the Air Force Office of Scientific Research under award number FA9550-23-1-0531. Benjamin Grimmer was additionally supported as a fellow of the Alfred P. Sloan Foundation.
    
    {\small
    \bibliographystyle{unsrt}
    \bibliography{references}
    }
    \appendix
    \section{Derivations of Remaining Corollaries}

\subsection{More Concentration Inequality and Technical Bounds}
\label{appendix-subsec:technical-lemmas}

\begin{lemma}[Hoeffding's inequality]
\label{lemma:Hoeffding-corollary}
Let $Y_1,...,Y_n$ be independent random variables bounded by $a_i \le Y_i \le b_i$. Then for any $t\ge 0$,  $S_n = \sum_{i=1}^n Y_i$ has
\begin{equation*}
\mathbb{P} \left( S_n \le \mathbb{E}[ S_n ] - t \right) \le \exp \left( -\frac{2 t^2}{\sum_{i=1}^n (b_i-a_i)^2 } \right).
\end{equation*}
\end{lemma}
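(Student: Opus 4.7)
This is the classical Hoeffding inequality, so the plan is to reproduce the standard Chernoff-plus-Hoeffding's-lemma argument rather than invent anything new. The one-line summary: exponentiate, apply Markov, use independence to factor the moment generating function, bound each factor by Hoeffding's lemma, and optimize the free parameter.

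Concretely, I would fix $\lambda > 0$ and write
\begin{equation*}
\mathbb{P}(S_n \le \mathbb{E}[S_n] - t) = \mathbb{P}\bigl(e^{\lambda(\mathbb{E}[S_n] - S_n)} \ge e^{\lambda t}\bigr) \le e^{-\lambda t}\, \mathbb{E}\bigl[e^{\lambda(\mathbb{E}[S_n]-S_n)}\bigr]
\end{equation*}
by Markov's inequality. Independence of the $Y_i$ then lets me factor the expectation as
\begin{equation*}
\mathbb{E}\bigl[e^{\lambda(\mathbb{E}[S_n]-S_n)}\bigr] = \prod_{i=1}^n \mathbb{E}\bigl[e^{\lambda(\mathbb{E}[Y_i] - Y_i)}\bigr].
\end{equation*}

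Next I would invoke Hoeffding's lemma, which states that for any zero-mean random variable $Z$ with $Z \in [\alpha,\beta]$ almost surely one has $\mathbb{E}[e^{\lambda Z}] \le \exp(\lambda^2(\beta-\alpha)^2/8)$. Applied to $Z_i := \mathbb{E}[Y_i] - Y_i \in [\mathbb{E}[Y_i] - b_i,\, \mathbb{E}[Y_i] - a_i]$ (an interval of length $b_i - a_i$), this yields
\begin{equation*}
\prod_{i=1}^n \mathbb{E}\bigl[e^{\lambda Z_i}\bigr] \le \exp\!\left( \frac{\lambda^2}{8} \sum_{i=1}^n (b_i-a_i)^2 \right).
\end{equation*}

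Combining gives $\mathbb{P}(S_n \le \mathbb{E}[S_n] - t) \le \exp\bigl(\tfrac{\lambda^2}{8}\sum_i(b_i-a_i)^2 - \lambda t\bigr)$, and minimizing over $\lambda > 0$ at $\lambda^\star = 4t/\sum_i(b_i-a_i)^2$ yields the claimed bound $\exp\bigl(-2t^2 / \sum_i (b_i-a_i)^2\bigr)$. The only non-routine ingredient is Hoeffding's lemma itself, which I would either cite or prove in a short aside by convexity of $x \mapsto e^{\lambda x}$ on $[\alpha,\beta]$ combined with a second-order Taylor expansion of the log-MGF of a Bernoulli-type bound; this convexity step is the main technical obstacle, but it is entirely textbook.
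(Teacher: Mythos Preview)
Your proposal is correct and gives the standard Chernoff--Hoeffding argument in full; the paper, by contrast, does not prove this lemma at all but simply cites Hoeffding's original 1963 paper. So you have actually supplied more than the paper does.
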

\begin{proof}[Proof of Lemma \ref{lemma:Hoeffding-corollary}]
This is a restatement of Hoeffding's inequality~\cite{Hoeffding1963}.
\end{proof}

\begin{proposition}
\label{prop:error-bound-g-smart}
{\color{blue} Suppose Assumption~\ref{assumption:uniform-Lipschitz} holds.} For an arbitrary fixed pair $(k,i)\in \mathcal{I}(K, \boldsymbol{\tau} )$ and any $\delta\in (0,1)$, if $\widetilde{g}_i^k$ is constructed by (\ref{eq:sample-mean-plus-smart}) or (\ref{eq:exact-eval-plus-smart}), and $a\ge \frac{4}{9} \log \left( \frac{m+ 1}{\delta} \right)$, then the following holds with probability at least $1-\delta$:
\begin{equation*}
\left\| \widetilde{g}_i^k - g(x_i^k) \right\|_2 \le \left\| \widetilde{g}_0^k - g(x_0^k) \right\|_2 + \left\| \widetilde{J}_0^k - g'(x_0^k) \right\|_{\rm{op}} \|x_i^k -x_0^k\|_2 + {\color{blue} \frac{2 \widehat{L}_g}{\sqrt{a}} \sqrt{\log \left( \frac{m+ 1}{\delta} \right)} \|x_i^k -x_0^k\|_2^2 }.
\end{equation*}
\end{proposition}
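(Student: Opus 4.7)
The plan is to mirror the structure of Proposition~\ref{prop:error-bound-g-standard}, but apply Matrix Bernstein to a more refined mean-zero summand whose individual operator norm is quadratic rather than linear in $\|x_i^k - x_0^k\|_2$. Specifically, in both \eqref{eq:sample-mean-plus-smart} and \eqref{eq:exact-eval-plus-smart}, writing $\Delta_k := x_i^k - x_0^k$, I would decompose
\begin{equation*}
\widetilde{g}_i^k - g(x_i^k) = \bigl(\widetilde{g}_0^k - g(x_0^k)\bigr) + \bigl(\widetilde{J}_0^k - g'(x_0^k)\bigr)\Delta_k + \frac{1}{a}\sum_{\xi \in \mathcal{A}_i^k} Y_\xi,
\end{equation*}
where
\begin{equation*}
Y_\xi := \bigl[g_\xi(x_i^k) - g_\xi(x_0^k) - g'_\xi(x_0^k)\Delta_k\bigr] - \bigl[g(x_i^k) - g(x_0^k) - g'(x_0^k)\Delta_k\bigr].
\end{equation*}
The algebra here is the standard adding-and-subtracting trick that isolates exactly the $\widetilde{g}_0^k$ and $\widetilde{J}_0^k$ errors needed in the target bound, while leaving the remaining stochastic average centered.

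Next, conditioning on the iterate history through $x_i^k$, the $Y_\xi$'s are i.i.d.\ $m \times 1$ mean-zero matrices. To bound $\|Y_\xi\|_{\mathrm{op}}$, I would apply the fundamental theorem of calculus to each of $g_\xi$ and $g$ and use Lipschitz continuity of the Jacobians ($L_g$-Lipschitz for $g_\xi$ by assumption and for $g$ by Proposition~\ref{prop:g-and-g'-lip}) to obtain the classical quadratic remainder $\|g_\xi(y)-g_\xi(x)-g'_\xi(x)(y-x)\|_2 \le \tfrac{L_g}{2}\|y-x\|_2^2$, and likewise for $g$. Triangle inequality then gives $\|Y_\xi\|_{\mathrm{op}} \le L_g \|\Delta_k\|_2^2$.

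With this uniform bound in hand, I would invoke Lemma~\ref{lemma:Matrix-Bernstein-corollary} (conditioned on $x_0^k, x_i^k$) with $L = L_g\|\Delta_k\|_2^2$ and $d_1+d_2 = m+1$. Provided $a \ge \tfrac{4}{9}\log\bigl(\tfrac{m+1}{\delta}\bigr)$, it yields, with conditional probability at least $1-\delta$,
\begin{equation*}
\left\|\frac{1}{a}\sum_{\xi\in\mathcal{A}_i^k} Y_\xi\right\|_2 \;\le\; \frac{2L_g}{\sqrt{a}}\sqrt{\log\!\left(\frac{m+1}{\delta}\right)}\,\|\Delta_k\|_2^2 .
\end{equation*}
The usual total-expectation argument (identical to the footnote in the proof of Proposition~\ref{prop:error-bound-i=0-sample-mean}) removes the conditioning. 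Finally, applying the triangle inequality to the three-term decomposition and using $\|(\widetilde{J}_0^k - g'(x_0^k))\Delta_k\|_2 \le \|\widetilde{J}_0^k - g'(x_0^k)\|_{\mathrm{op}}\|\Delta_k\|_2$ yields the claimed inequality.

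The main subtlety, rather than an obstacle, is being careful that the Matrix Bernstein bound is applied conditionally on $x_i^k$ so that the scalar $L_g\|\Delta_k\|_2^2$ can legitimately play the role of the deterministic bound $L$ in Lemma~\ref{lemma:Matrix-Bernstein-corollary}; this is exactly the same conditioning pattern used in Propositions~\ref{prop:error-bound-i=0-sample-mean}--\ref{prop:error-bound-J}, so no new probabilistic machinery is required.
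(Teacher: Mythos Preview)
Your proposal is correct and follows essentially the same approach as the paper's proof: the same three-term decomposition of $\widetilde{g}_i^k - g(x_i^k)$, the same bound $\|Y_\xi\|_2 \le L_g\|x_i^k - x_0^k\|_2^2$ via Lipschitz continuity of the Jacobians, the same conditional application of Lemma~\ref{lemma:Matrix-Bernstein-corollary}, and the same de-conditioning and triangle-inequality finish.
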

\begin{proof}[Proof of Proposition \ref{prop:error-bound-g-smart}]
The proof is similar to the proofs in Section \ref{subsubsec:technical-lemmas}. 
Suppose $\mathcal{A}_i^k = \{\xi_1,..., \xi_a\}$, and denote $Y_r = g_{\xi_r}(x_i^k) - g_{\xi_r}(x_0^k) - g'_{\xi_r}(x_0^k) (x_i^k-x_0^k) - g(x_i^k) + g(x_0^k) + g'(x_0^k) (x_i^k-x_0^k)$ for each $r=1,..., a$. Then we have
\begin{equation}
\label{eq: proof-prop-error-bound-g-smart,eq-1}
\widetilde{g}_i^k - g(x_i^k) = \left( \widetilde{g}_0^k - g(x_0^k) \right) + \left( \widetilde{J}_0^k - g'(x_0^k) \right) (x_i^k-x_0^k) + \frac{1}{a} \sum_{r=1}^a Y_r .
\end{equation}
Use $\mathbf{x_i^k}$ to denote the sequence of iterates $\{x_0^0, x_1^0, ..., x_i^k\}$ in the rest of this proof. Then $Y_1,..., Y_a$ are independent conditioning on $\mathbf{x_i^k}$, and $\mathbb{E}[Y_r| \mathbf{x_i^k}]=0$. 
{\color{blue} By Assumption~\ref{assumption:uniform-Lipschitz}, $g'_{\xi_r} (\cdot)$ and $g'(\cdot)$ are both $\widehat{L}_g$-Lipschitz, which implies $\|g_{\xi_r}(x_i^k) - g_{\xi_r}(x_0^k) - g'_{\xi_r}(x_0^k) (x_i^k-x_0^k) \|_2 \le \frac{\widehat{L}_g}{2} \|x_i^k- x_0^k\|_2^2$ and $\|g(x_i^k) - g(x_0^k) - g'(x_0^k) (x_i^k-x_0^k) \|_2 \le \frac{\widehat{L}_g}{2} \|x_i^k- x_0^k\|_2^2$. So $\widehat{L}_g \|x_i^k- x_0^k\|_2^2$} is an upper bound of $\|Y_r\|_2$. Applying Lemma \ref{lemma:Matrix-Bernstein-corollary} to $Y_1,..., Y_a$ (conditioning on $\mathbf{x_i^k}$) and following a similar proof as Proposition \ref{prop:error-bound-i=0-sample-mean}, we have
\begin{equation*}
\left\| \frac{1}{a} \sum_{r=1}^a Y_r \right\|_2 \le {\color{blue} \frac{2 \widehat{L}_g}{\sqrt{a}} \|x_i^k -x_0^k\|_2^2 \sqrt{\log \left( \frac{m+ 1}{\delta} \right)} }
\end{equation*}
with probability at least $1-\delta$. Combining it with (\ref{eq: proof-prop-error-bound-g-smart,eq-1}) completes the proof.
\end{proof}

\subsection{Proofs of Remaining Corollaries}
\label{appendix-subsec:proof-of-corollaries}
Using the technical Propositions from Section \ref{subsubsec:technical-lemmas} and Appendix \ref{appendix-subsec:technical-lemmas}, we can prove the Lemmas and Corollaries for $\mathtt{estimator}_0$ and $\mathtt{estimator}_2$--$\mathtt{estimator}_4$.

\noindent {\bf Proofs for $\mathtt{estimator}_0$.}

\begin{proof}[Proof of Lemma \ref{lemma:mini-batch}]
For any $K\in \mathbb{N}_+$, $\boldsymbol{\tau} \in \mathbb{N}_+^K$ and $\Delta \in (0,1)$, let $\delta = \frac{\Delta}{2 \Sigma_\tau}$. By Proposition \ref{prop:error-bound-i=0-sample-mean}, for an arbitrary $(k,i)\in \mathcal{I}(K, \boldsymbol{\tau} )$, any $A\ge \frac{4}{9} \log \left( \frac{m+ 1}{\delta} \right)$ and any $B\ge \frac{4}{9}\log \left( \frac{m+ n}{\delta} \right)$, the following two inequalities hold with probability at least $1- 2\delta$,
\begin{equation}
\label{proof-lemma-mini-batch,eq-1}
\left\| \widetilde{g}_i^k - g(x_i^k) \right\|_2 \le \frac{2 \sigma_g}{\sqrt{A}} \sqrt{\log \left( \frac{m+ 1}{\delta} \right)} , \quad \left\| \widetilde{J}_i^k - g'(x_i^k) \right\|_{\rm{op}} \le \frac{2 \sigma_{g'} }{\sqrt{B}} \sqrt{\log \left( \frac{m+ n}{\delta} \right)} .
\end{equation}
Let $\mathcal{C}(K, \boldsymbol{\tau}, \Delta) = \{(A,B) \in \mathbb{N}_+^2 : A \ge \frac{4}{9} \log ( \frac{2(m+ 1) \Sigma_\tau}{\Delta} ), \text{and } B \ge \frac{4}{9} \log ( \frac{2(m+ n) \Sigma_\tau}{\Delta} )\}$. Then for any $(A,B) \in \mathcal{C}(K, \boldsymbol{\tau}, \Delta)$, by using a union probability bound, (\ref{proof-lemma-mini-batch,eq-1}) holds for all $(k,i)\in \mathcal{I}(K, \boldsymbol{\tau} )$ with probability at least $1- 2\Sigma_\tau \delta$. Note that $1- 2\Sigma_\tau \delta = 1- \Delta$, so we can set $\gamma_0(K, \boldsymbol{\tau}, \theta, \Delta ) = \frac{2 \sigma_g}{\sqrt{A}} \sqrt{\log ( \frac{2(m+ 1) \Sigma_\tau}{\Delta} )}$, $\lambda_0(K, \boldsymbol{\tau}, \theta, \Delta ) = \frac{2 \sigma_{g'}}{\sqrt{B}} \sqrt{\log ( \frac{2(m+ n) \Sigma_\tau}{\Delta} )}$ and $\gamma_1= \gamma_2= \lambda_1=0$ to satisfy Assumption \ref{assumption:general-estimation-error-bounds}. 
\end{proof}

\begin{proof}[Proof of Corollary \ref{cor:mini-batch}]
We can obtain the explicit form of $\mathcal{C}(K, \boldsymbol{\tau}, \Delta)$, $\{\gamma_\ell \}_{\ell=0}^2$ and $\{\lambda_\ell \}_{\ell=0}^1$ from Lemma \ref{lemma:mini-batch}, and plug them into Theorem \ref{thm:unified-thm-short}. Then by Theorem \ref{thm:unified-thm-short}, to get $\frac{1}{\Sigma_\tau} \sum_{k=0}^{K-1} \sum_{i=0}^{\tau_k -1} \| \mathcal{G}_M(x_i^k) \|_2 ^2 \le \epsilon$ with probability at least $1-\Delta$, we need $\overline{\delta} \le \Delta/(2 \Sigma_\tau )$, $\overline{\epsilon} \le \epsilon/(5\cdot 30M)$, and the following inequalities:
\begin{align*}
\begin{cases}
& A \ge \frac{4}{9} \log \left( \frac{4(m+ 1) \Sigma_\tau}{\Delta} \right), \text{and } B \ge \frac{4}{9} \log \left( \frac{4(m+ n) \Sigma_\tau}{\Delta} \right) \\
& \Sigma_\tau \ge 5\cdot 30M (\Phi(x_0^0) - \Phi^*) / \epsilon \\
& \frac{2 \sigma_g}{\sqrt{A}} \sqrt{\log \left( \frac{4(m+ 1) \Sigma_\tau}{\Delta} \right)} \le \epsilon/(5\cdot 125 l_f M) \\
& \frac{4 \sigma_{g'}^2}{B} \log \left( \frac{4(m+ n) \Sigma_\tau}{\Delta} \right) \le L_g \epsilon/(5\cdot 95 l_f M) 
\end{cases}
\end{align*}
which reduces to 
\begin{equation*}
\begin{cases}
\Sigma_\tau \ge C_{\Sigma} \cdot \epsilon^{-1} \\
A \ge C_A \cdot \epsilon^{-2} \cdot \log \left( \frac{4(m+ 1) \Sigma_\tau}{\Delta} \right) \\
B \ge C_B \cdot \epsilon^{-1} \cdot \log \left( \frac{4(m+ n) \Sigma_\tau}{\Delta} \right) 
\end{cases}
\end{equation*}
providing that $1/\epsilon$ is sufficiently large. Here $C_{\Sigma}, C_A, C_B$ are some constants.

Let $\Sigma_\tau = \lceil C_{\Sigma} \cdot \epsilon^{-1} \rceil$, $A = \lceil C_A \cdot \epsilon^{-2} \cdot \log ( \frac{4(m+ 1) \Sigma_\tau}{\Delta} ) \rceil$, $B = \lceil C_B \cdot \epsilon^{-1} \cdot \log ( \frac{4(m+ n) \Sigma_\tau}{\Delta} ) \rceil$, then the conditions above hold for sufficiently small $\epsilon$. So Theorem \ref{thm:unified-thm-short} guarantees that $\frac{1}{\Sigma_\tau} \sum_{k=0}^{K-1} \sum_{i=0}^{\tau_k -1} \| \mathcal{G}_M(x_i^k) \|_2 ^2 \le \epsilon$ with probability at least $1-\Delta$.

In (\ref{eq:mini-batch}), at the $(k,i)$-th iteration, we evaluate $g_{\xi}(\cdot)$ for $A$ times and $g'_{\xi}(\cdot)$ for $B$ times. Then the oracle complexity for evaluations of $g_{\xi}(\cdot)$ is 
\begin{equation*}
\Sigma_\tau A = \Theta(\epsilon^{-1}) \cdot \widetilde{\Theta} \left( \epsilon^{-2} \log(1/\Delta) \right) = \widetilde{\Theta} \left( \epsilon^{-3} \log(1/\Delta) \right) ,
\end{equation*}
and the oracle complexity for evaluations of Jacobians $g'_{\xi}(\cdot)$ is 
\begin{equation*}
\Sigma_\tau B = \Theta(\epsilon^{-1}) \cdot \widetilde{\Theta} \left( \epsilon^{-1} \log(1/\Delta) \right) = \widetilde{\Theta} \left( \epsilon^{-2} \log(1/\Delta) \right) . 
\end{equation*}
\end{proof}

\noindent {\bf Proofs for $\mathtt{estimator}_2$.}

\begin{proof}[Proof of Lemma \ref{lemma:sample-mean-plus-smart}]
For any $K\in \mathbb{N}_+$, $\boldsymbol{\tau} \in \mathbb{N}_+^K$ and $\Delta \in (0,1)$, let $\delta = \frac{\Delta}{2 \Sigma_\tau}$. By Proposition \ref{prop:error-bound-i=0-sample-mean}, for an arbitrary $k\in \{0,..., K-1\}$, any $A\ge \frac{4}{9} \log \left( \frac{m+ 1}{\delta} \right)$ and any $B\ge \frac{4}{9}\log \left( \frac{m+ n}{\delta} \right)$, the following two inequalities hold with probability at least $1- 2\delta$,
\begin{equation}
\label{proof-lemma-sample-mean-plus-smart,eq-1}
\left\| \widetilde{g}_0^k - g(x_0^k) \right\|_2 \le \frac{2 \sigma_g}{\sqrt{A}} \sqrt{\log \left( \frac{m+ 1}{\delta} \right)} , \quad \left\| \widetilde{J}_0^k - g'(x_0^k) \right\|_{\rm{op}} \le \frac{2 \sigma_{g'} }{\sqrt{B}} \sqrt{\log \left( \frac{m+ n}{\delta} \right)} .
\end{equation}
By Proposition \ref{prop:error-bound-g-smart}, for an arbitrary $(k,i)\in \mathcal{I}(K, \boldsymbol{\tau} )$ and any $a\ge \frac{4}{9} \log \left( \frac{m+ 1}{\delta} \right)$, the following holds with probability at least $1-\delta$:
\begin{equation}
\label{proof-lemma-sample-mean-plus-smart,eq-2}
\begin{split}
\left\| \widetilde{g}_i^k - g(x_i^k) \right\|_2 \le \left\| \widetilde{g}_0^k - g(x_0^k) \right\|_2 + \left\| \widetilde{J}_0^k - g'(x_0^k) \right\|_{\rm{op}} \|x_i^k -x_0^k\|_2 + {\color{blue} \frac{2 \widehat{L}_g}{\sqrt{a}} \sqrt{\log \left( \frac{m+ 1}{\delta} \right)} \|x_i^k -x_0^k\|_2^2 }.
\end{split}
\end{equation}
By Proposition \ref{prop:error-bound-J}, for an arbitrary $(k,i)\in \mathcal{I}(K, \boldsymbol{\tau} )$ and any $b\ge \frac{4}{9} \log \left( \frac{m+ n}{\delta} \right)$, the following holds with probability at least $1-\delta$:
\begin{equation}
\label{proof-lemma-sample-mean-plus-smart,eq-3}
\left\| \widetilde{J}_i^k - g'(x_i^k) \right\|_{\rm{op}} \le \left\| \widetilde{J}_0^k - g'(x_0^k) \right\|_{\rm{op}} + {\color{blue} \frac{4 \widehat{L}_g}{\sqrt{b}} \sqrt{\log \left( \frac{m+ n}{\delta} \right)} \|x_i^k -x_0^k\|_2 }.
\end{equation}
Let $\mathcal{C}(K, \boldsymbol{\tau}, \Delta) = \{(A,B,a,b) \in \mathbb{N}_+^4 : A,a \ge \frac{4}{9} \log ( \frac{2(m+ 1) \Sigma_\tau}{\Delta} ), \text{and } B,b \ge \frac{4}{9} \log ( \frac{2(m+ n) \Sigma_\tau}{\Delta} )\}$. Then for any $(A,B,a,b) \in \mathcal{C}(K, \boldsymbol{\tau}, \Delta)$, by using a union probability bound, (\ref{proof-lemma-sample-mean-plus-smart,eq-1}), (\ref{proof-lemma-sample-mean-plus-smart,eq-2}) and (\ref{proof-lemma-sample-mean-plus-smart,eq-3}) hold for all $(k,i)\in \mathcal{I}(K, \boldsymbol{\tau} )$ with probability at least $1- 2\Sigma_\tau \delta$. Note that $1- 2\Sigma_\tau \delta = 1- \Delta$, so we can set $\gamma_0(K, \boldsymbol{\tau}, \theta, \Delta ) = \frac{2 \sigma_g}{\sqrt{A}} \sqrt{\log ( \frac{2(m+ 1) \Sigma_\tau}{\Delta} )}$, $\gamma_1(K, \boldsymbol{\tau}, \theta, \Delta ) = \lambda_0(K, \boldsymbol{\tau}, \theta, \Delta ) = \frac{2 \sigma_{g'}}{\sqrt{B}} \sqrt{\log ( \frac{2(m+ n) \Sigma_\tau}{\Delta} )}$, {\color{blue} $\gamma_2(K, \boldsymbol{\tau}, \theta, \Delta ) = \frac{2 \widehat{L}_g}{\sqrt{a}} \sqrt{\log ( \frac{2(m+ 1) \Sigma_\tau}{\Delta} )}$ and $\lambda_1(K, \boldsymbol{\tau}, \theta, \Delta ) = \frac{4 \widehat{L}_g}{\sqrt{b}} \sqrt{\log ( \frac{2(m+ n) \Sigma_\tau}{\Delta} )}$} to satisfy Assumption \ref{assumption:general-estimation-error-bounds}. 
\end{proof}

\begin{proof}[Proof of Corollary \ref{cor:sample-mean-plus-smart}]
We can obtain the explicit form of $\mathcal{C}(K, \boldsymbol{\tau}, \Delta)$, $\{\gamma_\ell \}_{\ell=0}^2$ and $\{\lambda_\ell \}_{\ell=0}^1$ from Lemma \ref{lemma:sample-mean-plus-smart}, and plug them into Theorem \ref{thm:unified-thm-short}. Then by Theorem \ref{thm:unified-thm-short}, to get $\frac{1}{K \tau} \sum_{k=0}^{K-1} \sum_{i=0}^{\tau -1} \| \mathcal{G}_M(x_i^k) \|_2 ^2 \le \epsilon$ with probability at least $1-\Delta$, we need $\overline{\delta} \le \Delta/(2 K \tau )$, $\overline{\epsilon} \le \epsilon/(5\cdot 30M)$, and the following inequalities:
\begin{align*}
\begin{cases}
& A,a \ge \frac{4}{9} \log \left( \frac{4(m+ 1) K\tau}{\Delta} \right), \text{and } B,b \ge \frac{4}{9} \log \left( \frac{4(m+ n) K\tau}{\Delta} \right)  \\
&K\tau \ge 5\cdot 30M (\Phi(x_0^0) - \Phi^*) / \epsilon \\
& \frac{2 \sigma_g}{\sqrt{A}} \sqrt{\log \left( \frac{4(m+ 1) K\tau}{\Delta} \right)} \le \epsilon/(5\cdot 125 l_f M) \\
& \frac{4 \sigma_{g'}^2}{B} \log \left( \frac{4(m+ n) K\tau}{\Delta} \right) \le L_g \epsilon/(5\cdot 95 l_f M) \\
& \color{blue} \tau^2 \cdot \frac{4 \sigma_{g'}^2}{B} \log \left( \frac{4(m+ n) K\tau}{\Delta} \right) \le L_g \epsilon/(5\cdot 135 l_f M) \\
& \color{blue} \tau^2 \cdot \frac{2 \widehat{L}_g}{\sqrt{a}} \sqrt{\log \left( \frac{4(m+ 1) K\tau}{\Delta} \right)} \le 6 L_g /25 \\
& \color{blue} \tau^2 \cdot \frac{16 \widehat{L}_g^2}{b} \log \left( \frac{4(m+ n) K\tau}{\Delta} \right) \le 6 L_g^2 /19
\end{cases}
\end{align*}
So it reduces to 
\begin{equation*}
\begin{cases}
K \tau \ge C_{\Sigma} \cdot \epsilon^{-1} \\
A \ge C_A \cdot \epsilon^{-2} \cdot \log \left( \frac{4(m+ 1) K \tau}{\Delta} \right) \\
\color{blue} B \ge C_B \cdot \tau^2 \cdot \epsilon^{-1} \cdot \log \left( \frac{4(m+ n) K \tau}{\Delta} \right) \\
\color{blue} a \ge C_a \cdot \tau^4 \cdot \log \left( \frac{4(m+ 1) K \tau}{\Delta} \right) \\
\color{blue} b \ge C_b \cdot \tau^2 \cdot \log \left( \frac{4(m+ n) K \tau}{\Delta} \right) 
\end{cases}
\end{equation*}
providing that $1/\epsilon$ is sufficiently large. Here $C_{\Sigma}, C_A, C_B, C_a, C_b$ are some constants.

For any positive integer $\tau$, let $K = \lceil \frac{C_{\Sigma} \cdot \epsilon^{-1}}{\tau} \rceil$, $A = \lceil C_A \cdot \epsilon^{-2} \cdot \log ( \frac{4(m+ 1) K \tau}{\Delta} ) \rceil$, {\color{blue} $B = \lceil C_B \cdot \tau^2 \cdot \epsilon^{-1} \cdot \log ( \frac{4(m+ n) K \tau}{\Delta} ) \rceil$, $a = \lceil C_a \cdot \tau^4 \cdot \log ( \frac{4(m+ 1) K \tau}{\Delta} ) \rceil$, $b = \lceil C_b \cdot \tau^2 \cdot \log ( \frac{4(m+ n) K \tau}{\Delta} ) \rceil$}, then the conditions above hold for sufficiently small $\epsilon$. So Theorem \ref{thm:unified-thm-short} guarantees that $\frac{1}{K \tau} \sum_{k=0}^{K-1} \sum_{i=0}^{\tau -1} \| \mathcal{G}_M(x_i^k) \|_2 ^2 \le \epsilon$ with probability at least $1-\Delta$.

In (\ref{eq:sample-mean-plus-smart}), at the $(k,0)$-th iteration, we evaluate $g_{\xi}(\cdot)$ for $A$ times and $g'_{\xi}(\cdot)$ for $B$ times. At the $(k,i)$-th iteration (with $i>0$), we evaluate $g_{\xi}(\cdot)$ for $2a$ times and $g'_{\xi}(\cdot)$ for $a+2b$ times.
Suppose $\tau = O(\epsilon^{-1})$, then the oracle complexity for evaluations of $g_{\xi}(\cdot)$ is 
\begin{equation*}
\begin{split}
KA + 2K(\tau-1)a \le KA + 2K\tau a 
&= \widetilde{\Theta} \left( (\epsilon^{-3} \tau^{-1} + \epsilon^{-1} \tau^4) \log(1/\Delta) \right),
\end{split}
\end{equation*}
and the oracle complexity for evaluations of Jacobians $g'_{\xi}(\cdot)$ is 
\begin{equation*}
\begin{split}
KB + K(\tau-1) (a+2b)
\le KB + K\tau a + 2K\tau b 
&= \widetilde{\Theta} \left( (\epsilon^{-2} \tau + \epsilon^{-1} \tau^4) \log(1/\Delta) \right). 
\end{split}
\end{equation*}
\end{proof}

\begin{proof}[Proof of Corollary \ref{cor:sample-mean-plus-smart-bounds}]
Suppose $\tau = \Theta(\epsilon^{ -\beta})$ for some $\beta \ge 0$. Then (\ref{eq: cor-sample-mean-plus-smart,eq-1}) can be simplified as
\begin{equation}
\label{eq: proof-cor-sample-mean-plus-smart-bounds,eq-1}
\widetilde{\Theta} \left( (\epsilon^{-3} \tau^{-1} + \epsilon^{-1} \tau^4) \log(1/\Delta) \right) = \widetilde{\Theta} (\epsilon^{- \max\{3-\beta, 1+4\beta\}} \log(1/\Delta) ) ,
\end{equation}
and (\ref{eq: cor-sample-mean-plus-smart,eq-2}) can be simplified as
\begin{equation}
\label{eq: proof-cor-sample-mean-plus-smart-bounds,eq-2}
\widetilde{\Theta} \left( (\epsilon^{-2} \tau + \epsilon^{-1} \tau^4) \log(1/\Delta) \right) = \widetilde{\Theta} (\epsilon^{- \max\{2+\beta, 1+4\beta\}} \log(1/\Delta) ) .
\end{equation}
(\ref{eq: proof-cor-sample-mean-plus-smart-bounds,eq-1}) is minimized by $\beta=\frac{2}{5}$. When $\beta=\frac{2}{5}$,(\ref{eq: proof-cor-sample-mean-plus-smart-bounds,eq-1}) and (\ref{eq: proof-cor-sample-mean-plus-smart-bounds,eq-2}) are $\widetilde{\Theta}(\epsilon^{-13/5} \log(1/\Delta))$. (\ref{eq: proof-cor-sample-mean-plus-smart-bounds,eq-2}) is minimized by $\beta=0$. When $\beta=0$, (\ref{eq: proof-cor-sample-mean-plus-smart-bounds,eq-2}) becomes $\widetilde{\Theta}(\epsilon^{-2} \log(1/\Delta))$ and (\ref{eq: proof-cor-sample-mean-plus-smart-bounds,eq-1}) becomes $\widetilde{\Theta}(\epsilon^{-3} \log(1/\Delta))$.
\end{proof}

\noindent {\bf Proofs for $\mathtt{estimator}_3$.}

\begin{proof}[Proof of Lemma \ref{lemma:exact-eval-plus-standard}]
We have $\widetilde{g}_0^k = g(x_0^k)$ and $\widetilde{J}_0^k = g'(x_0^k)$ from (\ref{eq:exact-eval-plus-standard}). For any $K\in \mathbb{N}_+$, $\boldsymbol{\tau} \in \mathbb{N}_+^K$ and $\Delta \in (0,1)$, let $\delta = \frac{\Delta}{2 \Sigma_\tau}$. By Proposition \ref{prop:error-bound-g-standard}, for an arbitrary $(k,i)\in \mathcal{I}(K, \boldsymbol{\tau} )$ and any $a\ge \frac{4}{9} \log \left( \frac{m+ 1}{\delta} \right)$, the following holds with probability at least $1-\delta$:
\begin{equation}
\label{proof-lemma-exact-eval-plus-standard,eq-1}
\left\| \widetilde{g}_i^k - g(x_i^k) \right\|_2 \le {\color{blue} \frac{4 \widehat{l}_g}{\sqrt{a}} \sqrt{\log \left( \frac{m+ 1}{\delta} \right)} \|x_i^k -x_0^k\|_2 }.
\end{equation}
By Proposition \ref{prop:error-bound-J}, for an arbitrary $(k,i)\in \mathcal{I}(K, \boldsymbol{\tau} )$ and any $b\ge \frac{4}{9} \log \left( \frac{m+ n}{\delta} \right)$, the following holds with probability at least $1-\delta$:
\begin{equation}
\label{proof-lemma-exact-eval-plus-standard,eq-2}
\left\| \widetilde{J}_i^k - g'(x_i^k) \right\|_{\rm{op}} \le {\color{blue} \frac{4 \widehat{L}_g}{\sqrt{b}} \sqrt{\log \left( \frac{m+ n}{\delta} \right)} \|x_i^k -x_0^k\|_2 }.
\end{equation}
Let $\mathcal{C}(K, \boldsymbol{\tau}, \Delta) = \{(a,b)\in \mathbb{N}_+^2 : a\ge \frac{4}{9} \log ( \frac{2(m+ 1) \Sigma_\tau}{\Delta} ), b\ge \frac{4}{9} \log ( \frac{2(m+ n) \Sigma_\tau}{\Delta} )\}$. Then for any $(a,b)\in \mathcal{C}(K, \boldsymbol{\tau}, \Delta)$, by using a union probability bound, (\ref{proof-lemma-exact-eval-plus-standard,eq-1}) and (\ref{proof-lemma-exact-eval-plus-standard,eq-2}) hold for all $(k,i)\in \mathcal{I}(K, \boldsymbol{\tau} )$ with probability at least $1- 2\Sigma_\tau \delta$. Note that $1- 2\Sigma_\tau \delta = 1- \Delta$, so we can set $\gamma_0 = \gamma_2 = \lambda_0= 0$, {\color{blue} $\gamma_1(K, \boldsymbol{\tau}, \theta, \Delta ) = \frac{4 \widehat{l}_g}{\sqrt{a}} \sqrt{\log ( \frac{2(m+ 1) \Sigma_\tau}{\Delta} )}$ and $\lambda_1(K, \boldsymbol{\tau}, \theta, \Delta ) = \frac{4 \widehat{L}_g}{\sqrt{b}} \sqrt{\log ( \frac{2(m+ n) \Sigma_\tau}{\Delta} )}$} to satisfy Assumption \ref{assumption:general-estimation-error-bounds}. 
\end{proof}

\begin{proof}[Proof of Corollary \ref{cor:exact-eval-plus-standard}]
We can obtain the explicit form of $\mathcal{C}(K, \boldsymbol{\tau}, \Delta)$, $\{\gamma_\ell \}_{\ell=0}^2$ and $\{\lambda_\ell \}_{\ell=0}^1$ from Lemma \ref{lemma:exact-eval-plus-standard}, and plug them into Theorem \ref{thm:unified-thm-short}. Then by Theorem \ref{thm:unified-thm-short}, to get $\frac{1}{K \tau} \sum_{k=0}^{K-1} \sum_{i=0}^{\tau -1} \| \mathcal{G}_M(x_i^k) \|_2 ^2 \le \epsilon$ with probability at least $1-\Delta$, we need $\overline{\delta} \le \Delta/(2 K \tau )$, $\overline{\epsilon} \le \epsilon/(5\cdot 30M)$, and the following inequalities:
\begin{align*}
\begin{cases}
& a\ge \frac{4}{9} \log \left( \frac{4(m+ 1) K \tau}{\Delta} \right), \text{and } b\ge \frac{4}{9} \log \left( \frac{4(m+ n) K \tau}{\Delta} \right)\\
& K \tau \ge 5\cdot 30M (\Phi(x_0^0) - \Phi^*) / \epsilon \\
& \color{blue} \tau^2 \cdot \frac{16 \widehat{l}_g^2}{a} \log \left( \frac{4(m+ 1) K \tau}{\Delta} \right) \le L_g \epsilon/(5\cdot 135 l_f M) \\
& \color{blue} \tau^2 \cdot \frac{16 \widehat{L}_g^2}{b} \log \left( \frac{4(m+ n) K \tau}{\Delta} \right) \le 6 L_g^2 /19
\end{cases}
\end{align*}
which reduces to 
\begin{equation*}
\begin{cases}
K \tau \ge C_{\Sigma} \cdot \epsilon^{-1} \\
\color{blue} a \ge C_a \cdot \tau^2 \cdot \epsilon^{-1} \cdot \log \left( \frac{4(m+ 1) K \tau}{\Delta} \right) \\
\color{blue} b \ge C_b \cdot \tau^2 \cdot \log \left( \frac{4(m+ n) K \tau}{\Delta} \right) 
\end{cases}
\end{equation*}
providing that $1/\epsilon$ is sufficiently large. Here $C_{\Sigma}, C_a, C_b$ are some constants.

For any positive integer $\tau$, let $K= \lceil \frac{C_{\Sigma} \cdot \epsilon^{-1}}{ \tau} \rceil$, {\color{blue} $a= \lceil C_a \cdot \tau^2 \cdot \epsilon^{-1} \cdot \log ( \frac{4(m+ 1) K \tau}{\Delta} ) \rceil$, $b= \lceil C_b \cdot \tau^2 \cdot \log ( \frac{4(m+ n) K \tau}{\Delta} ) \rceil$}. Then the conditions above are satisfied, so Theorem \ref{thm:unified-thm-short} guarantees that $\frac{1}{K \tau} \sum_{k=0}^{K-1} \sum_{i=0}^{\tau -1} \| \mathcal{G}_M(x_i^k) \|_2 ^2 \le \epsilon$ with probability at least $1-\Delta$.

In (\ref{eq:exact-eval-plus-standard}), at the $(k,0)$-th iteration, we evaluate $g_{j}(\cdot)$ for $N$ times and $g'_{j}(\cdot)$ for $N$ times. At the $(k,i)$-th iteration (with $i>0$), we evaluate $g_{j}(\cdot)$ for $a$ times and $g'_{j}(\cdot)$ for $b$ times. So the oracle complexity for evaluations of $g_{\xi}(\cdot)$ is 
\begin{equation*}
\begin{split}
KN + K(\tau-1)a \le K(N + \tau \cdot a) &\le (1+ \frac{C_{\Sigma}}{ \tau \epsilon} ) \cdot ( N + \widetilde{\Theta} (\epsilon^{-1} \tau^3 ) ) \\
&= \widetilde{\Theta} \left( N+ \epsilon^{-1} \tau^3 + N\epsilon^{-1} \tau^{-1} + \epsilon^{-2} \tau^2 \right),
\end{split}
\end{equation*}
and the oracle complexity for evaluations of Jacobians $g'_{\xi}(\cdot)$ is 
\begin{equation*}
\begin{split}
KN + K(\tau-1)b \le K(N + \tau \cdot b) &\le (1+ \frac{C_{\Sigma}}{ \tau \epsilon} ) \cdot ( N + \widetilde{\Theta} (\tau^3) ) \\
&= \widetilde{\Theta} \left( N+\tau^3+ N\epsilon^{-1} \tau^{-1} + \epsilon^{-1} \tau^2 \right). 
\end{split}
\end{equation*}
\end{proof}

\begin{proof}[Proof of Corollary \ref{cor:exact-eval-plus-standard-bounds}]
Note that $\tau \ge 1$, so 
\begin{equation*}
N+ \epsilon^{-1} \tau^3 + N\epsilon^{-1} \tau^{-1} + \epsilon^{-2} \tau^2 \ge N + \epsilon^{-2} \tau^2 \ge N+ \epsilon^{-2} \ge 0.
\end{equation*}
We also have
\begin{equation*}
N+ \epsilon^{-1} \tau^3 + N\epsilon^{-1} \tau^{-1} + \epsilon^{-2} \tau^2 \ge \max \{ N\epsilon^{-1} \tau^{-1}, \epsilon^{-2} \tau^2 \} \ge N^{2/3} \epsilon^{-4/3} \ge 0
\end{equation*}
Combine the two inequalities above, the rate in (\ref{eq: cor-exact-eval-plus-standard,eq-1}) is at least $\widetilde{\Theta} \left( \max \{ N+ \epsilon^{-2}, N^{2/3} \epsilon^{-4/3} \} \right) = \widetilde{\Theta} \left( N+ \epsilon^{-2}+ N^{2/3} \epsilon^{-4/3} \right)$. We claim that it can be attained by $\tau = \Theta \left( \max\{ 1, N^{1/3} \epsilon^{1/3} \} \right)$. We consider the following two subcases under this choice of $\tau$:
\begin{itemize}
\item If $N = O( \epsilon^{-1})$, then $\tau = \Theta(1)$, (\ref{eq: cor-exact-eval-plus-standard,eq-1}) becomes $\widetilde{\Theta} \left( N+ \epsilon^{-1} + N\epsilon^{-1} + \epsilon^{-2} \right) = \widetilde{\Theta} \left( \epsilon^{-2} \right) = \widetilde{\Theta} \left( N+ N^{2/3} \epsilon^{-4/3} + \epsilon^{-2} \right)$, and (\ref{eq: cor-exact-eval-plus-standard,eq-2}) becomes $\widetilde{\Theta} \left( N+1+ N\epsilon^{-1} + \epsilon^{-1} \right) = \widetilde{\Theta} \left( N\epsilon^{-1} \right) \\ = \widetilde{\Theta} \left( \min \{ N\epsilon^{-1}, N+ N^{2/3} \epsilon^{-4/3} \} \right)$.
\item If $N = \Omega( \epsilon^{-1})$, then $\tau = \Theta(N^{1/3} \epsilon^{1/3})$, (\ref{eq: cor-exact-eval-plus-standard,eq-1}) becomes $\widetilde{\Theta} \left( N+ N^{2/3} \epsilon^{-4/3} \right) = \widetilde{\Theta} \left( N+ N^{2/3} \epsilon^{-4/3} + \epsilon^{-2} \right)$, and (\ref{eq: cor-exact-eval-plus-standard,eq-2}) becomes $\widetilde{\Theta} \left( N+ N\epsilon + N^{2/3} \epsilon^{-4/3} + N^{2/3} \epsilon^{-1/3} \right) = \widetilde{\Theta} \left( N+ N^{2/3} \epsilon^{-4/3} \right) \\ = \widetilde{\Theta} \left( \min \{ N\epsilon^{-1}, N+ N^{2/3} \epsilon^{-4/3} \} \right)$.
\end{itemize}
By the two subcases above, when $\tau = \Theta \left( \max\{ 1, N^{1/3} \epsilon^{1/3} \} \right)$, (\ref{eq: cor-exact-eval-plus-standard,eq-1}) attains the aforementioned minimal asymptotic rate, and (\ref{eq: cor-exact-eval-plus-standard,eq-2}) becomes $\widetilde{\Theta} \left( \min \{ N\epsilon^{-1}, N+ N^{2/3} \epsilon^{-4/3} \} \right)$, which finishes the proof of part (i).

Next, we consider minimizing the asymptotic rate of (\ref{eq: cor-exact-eval-plus-standard,eq-2}). Note that $\tau \ge 0$ and $N\epsilon^{-1} \tau^{-1} + \epsilon^{-1} \tau^2 \ge \max \{ N\tau^{-1}, \tau^2 \} \epsilon^{-1} \ge N^{2/3} \epsilon^{-1}$, so we have $N+\tau^3+ N\epsilon^{-1} \tau^{-1} + \epsilon^{-1} \tau^2 \ge N + N^{2/3} \epsilon^{-1} \ge 0$. So the rate in (\ref{eq: cor-exact-eval-plus-standard,eq-2}) is at least $\widetilde{\Theta} \left( N + N^{2/3} \epsilon^{-1} \right)$. When $\tau = \Theta \left( N^{1/3} \right)$, (\ref{eq: cor-exact-eval-plus-standard,eq-2}) attains $\widetilde{\Theta} \left( N + N^{2/3} \epsilon^{-1} \right)$, and (\ref{eq: cor-exact-eval-plus-standard,eq-1}) becomes $\widetilde{\Theta} \left( N+ N\epsilon^{-1} + N^{2/3} \epsilon^{-1} + N^{2/3} \epsilon^{-2} \right) = \widetilde{\Theta} \left( N\epsilon^{-1} + N^{2/3} \epsilon^{-2} \right)$.
\end{proof}

\noindent {\bf Proofs for $\mathtt{estimator}_4$.}

\begin{proof}[Proof of Lemma \ref{lemma:exact-eval-plus-smart}]
We have $\widetilde{g}_0^k = g(x_0^k)$ and $\widetilde{J}_0^k = g'(x_0^k)$ from (\ref{eq:exact-eval-plus-smart}). For any $K\in \mathbb{N}_+$, $\boldsymbol{\tau} \in \mathbb{N}_+^K$ and $\Delta \in (0,1)$, let $\delta = \frac{\Delta}{2 \Sigma_\tau}$. By Proposition \ref{prop:error-bound-g-smart}, for an arbitrary $(k,i)\in \mathcal{I}(K, \boldsymbol{\tau} )$ and any $a\ge \frac{4}{9} \log \left( \frac{m+ 1}{\delta} \right)$, the following holds with probability at least $1-\delta$:
\begin{equation}
\label{proof-lemma-exact-eval-plus-smart,eq-1}
\left\| \widetilde{g}_i^k - g(x_i^k) \right\|_2 \le {\color{blue} \frac{2 \widehat{L}_g}{\sqrt{a}} \sqrt{\log \left( \frac{m+ 1}{\delta} \right)} \|x_i^k -x_0^k\|_2^2 }.
\end{equation}
By Proposition \ref{prop:error-bound-J}, for an arbitrary $(k,i)\in \mathcal{I}(K, \boldsymbol{\tau} )$ and any $b\ge \frac{4}{9} \log \left( \frac{m+ n}{\delta} \right)$, the following holds with probability at least $1-\delta$:
\begin{equation}
\label{proof-lemma-exact-eval-plus-smart,eq-2}
\left\| \widetilde{J}_i^k - g'(x_i^k) \right\|_{\rm{op}} \le {\color{blue} \frac{4 \widehat{L}_g}{\sqrt{b}} \sqrt{\log \left( \frac{m+ n}{\delta} \right)} \|x_i^k -x_0^k\|_2 }.
\end{equation}
Let $\mathcal{C}(K, \boldsymbol{\tau}, \Delta) = \{(a,b)\in \mathbb{N}_+^2 : a\ge \frac{4}{9} \log ( \frac{2(m+ 1) \Sigma_\tau}{\Delta} ), b\ge \frac{4}{9} \log ( \frac{2(m+ n) \Sigma_\tau}{\Delta} )\}$. Then for any $(a,b)\in \mathcal{C}(K, \boldsymbol{\tau}, \Delta)$, by using a union probability bound, (\ref{proof-lemma-exact-eval-plus-smart,eq-1}) and (\ref{proof-lemma-exact-eval-plus-smart,eq-2}) hold for all $(k,i)\in \mathcal{I}(K, \boldsymbol{\tau} )$ with probability at least $1- 2\Sigma_\tau \delta$. Note that $1- 2\Sigma_\tau \delta = 1- \Delta$, so we can set $\gamma_0 = \gamma_1 = \lambda_0= 0$, {\color{blue} $\gamma_2(K, \boldsymbol{\tau}, \theta, \Delta ) = \frac{2 \widehat{L}_g}{\sqrt{a}} \sqrt{\log ( \frac{2(m+ 1) \Sigma_\tau}{\Delta} )}$ and $\lambda_1(K, \boldsymbol{\tau}, \theta, \Delta ) = \frac{4 \widehat{L}_g}{\sqrt{b}} \sqrt{\log ( \frac{2(m+ n) \Sigma_\tau}{\Delta} )}$} to satisfy Assumption \ref{assumption:general-estimation-error-bounds}. 
\end{proof}

\begin{proof}[Proof of Corollary \ref{cor:exact-eval-plus-smart}]
We can obtain the explicit form of $\mathcal{C}(K, \boldsymbol{\tau}, \Delta)$, $\{\gamma_\ell \}_{\ell=0}^2$ and $\{\lambda_\ell \}_{\ell=0}^1$ from Lemma \ref{lemma:exact-eval-plus-smart}, and plug them into Theorem \ref{thm:unified-thm-short}. Then by Theorem \ref{thm:unified-thm-short}, to get $\frac{1}{K \tau} \sum_{k=0}^{K-1} \sum_{i=0}^{\tau -1} \| \mathcal{G}_M(x_i^k) \|_2 ^2 \le \epsilon$ with probability at least $1-\Delta$, we need $\overline{\delta} \le \Delta/(2 K \tau )$, $\overline{\epsilon} \le \epsilon/(5\cdot 30M)$, and the following inequalities:
\begin{align*}
\begin{cases}
& a \ge \frac{4}{9} \log \left( \frac{4(m+ 1) K \tau}{\Delta} \right), \text{and } b \ge \frac{4}{9} \log \left( \frac{4(m+ n) K \tau}{\Delta} \right) \\
& K \tau \ge 5\cdot 30M (\Phi(x_0^0) - \Phi^*) / \epsilon \\
& \color{blue} \tau^2 \cdot \frac{2 \widehat{L}_g}{\sqrt{a}} \sqrt{\log \left( \frac{4(m+ 1) K \tau}{\Delta} \right)} \le 6 L_g /25 \\
& \color{blue} \tau^2 \cdot \frac{16 \widehat{L}_g^2}{b} \log \left( \frac{4(m+ n) \Sigma_\tau}{\Delta} \right) \le 6 L_g^2 /19
\end{cases}
\end{align*}
So it reduces to 
\begin{equation*}
\begin{cases}
K \tau \ge C_{\Sigma} \cdot \epsilon^{-1} \\
\color{blue} a \ge C_a \cdot \tau^4 \cdot \log \left( \frac{4(m+ 1) K \tau}{\Delta} \right) \\
\color{blue} b \ge C_b \cdot \tau^2 \cdot \log \left( \frac{4(m+ n) K \tau}{\Delta} \right) 
\end{cases}
\end{equation*}
where $C_{\Sigma}, C_a, C_b$ are some constants.

For any positive integer $\tau$, let $K= \lceil \frac{C_{\Sigma} \cdot \epsilon^{-1}}{ \tau} \rceil$, {\color{blue} $a= \lceil C_a \cdot \tau^4 \cdot \log ( \frac{4(m+ 1) K \tau}{\Delta} ) \rceil$, $b= \lceil C_b \cdot \tau^2 \cdot \log ( \frac{4(m+ n) K \tau}{\Delta} ) \rceil$}. Then the conditions above are satisfied, so Theorem \ref{thm:unified-thm-short} guarantees that $\frac{1}{K \tau} \sum_{k=0}^{K-1} \sum_{i=0}^{\tau -1} \| \mathcal{G}_M(x_i^k) \|_2 ^2 \le \epsilon$ with probability at least $1-\Delta$.

In (\ref{eq:exact-eval-plus-smart}), at the $(k,0)$-th iteration, we evaluate $g_{j}(\cdot)$ for $N$ times and $g'_{j}(\cdot)$ for $N$ times. At the $(k,i)$-th iteration (with $i>0$), we evaluate $g_{j}(\cdot)$ for $a$ times and $g'_{j}(\cdot)$ for $b$ times.
So the oracle complexity for evaluations of $g_{\xi}(\cdot)$ is 
\begin{equation*}
\begin{split}
KN + K(\tau-1)a \le K(N + \tau \cdot a) &\le (1+ \frac{C_{\Sigma}}{ \tau \epsilon} ) \cdot ( N + \widetilde{\Theta} (\tau^5) ) \\
&= \widetilde{\Theta} \left( N+\tau^5 + N\epsilon^{-1} \tau^{-1} + \epsilon^{-1} \tau^4 \right),
\end{split}
\end{equation*}
and the oracle complexity for evaluations of Jacobians $g'_{\xi}(\cdot)$ is 
\begin{equation*}
\begin{split}
KN + K(\tau-1)b \le K(N + \tau \cdot b) &\le (1+ \frac{C_{\Sigma}}{ \tau \epsilon} ) \cdot ( N + \widetilde{\Theta} (\tau^3) ) \\
&= \widetilde{\Theta} \left( N+\tau^3+ N\epsilon^{-1} \tau^{-1} + \epsilon^{-1} \tau^2 \right). 
\end{split}
\end{equation*}
\end{proof}

\begin{proof}[Proof of Corollary \ref{cor:exact-eval-plus-smart-bounds}]
Note that $N\epsilon^{-1} \tau^{-1} + \epsilon^{-1} \tau^4 \ge N^{4/5} \epsilon^{-1}$, so we have $N+\tau^5 + N\epsilon^{-1} \tau^{-1} + \epsilon^{-1} \tau^4 \ge N + N^{4/5} \epsilon^{-1} \ge 0$. So the rate in (\ref{eq: cor-exact-eval-plus-smart,eq-1}) is at least $\widetilde{\Theta} \left( N + N^{4/5} \epsilon^{-1} \right)$. When $\tau = \Theta \left( N^{1/5} \right)$, (\ref{eq: cor-exact-eval-plus-smart,eq-1}) attains $\widetilde{\Theta} \left( N + N^{4/5} \epsilon^{-1} \right)$ and (\ref{eq: cor-exact-eval-plus-smart,eq-2}) is also $\widetilde{\Theta} \left( N + N^{4/5} \epsilon^{-1} \right)$.

Similarly, by the fact $N\epsilon^{-1} \tau^{-1} + \epsilon^{-1} \tau^2 \ge N^{2/3} \epsilon^{-1}$, we have $N+\tau^3+ N\epsilon^{-1} \tau^{-1} + \epsilon^{-1} \tau^2 \ge N + N^{2/3} \epsilon^{-1} \ge 0$. So the rate in (\ref{eq: cor-exact-eval-plus-smart,eq-2}) is at least $\widetilde{\Theta} \left( N + N^{2/3} \epsilon^{-1} \right)$. When $\tau = \Theta \left( N^{1/3} \right)$, (\ref{eq: cor-exact-eval-plus-smart,eq-2}) attains $\widetilde{\Theta} \left( N + N^{2/3} \epsilon^{-1} \right)$ and (\ref{eq: cor-exact-eval-plus-smart,eq-1}) becomes $\widetilde{\Theta} \left( N^{5/3} + N^{4/3} \epsilon^{-1} \right)$.
\end{proof}

\subsection{Proof for Randomized Epoch Durations}

\begin{proof}[Proof of Corollary \ref{cor:sample-mean-plus-standard-varying-tau}]
We first analyze part (i) in a similar way as the proof of Corollary \ref{cor:sample-mean-plus-standard}. Use the explicit form of $\mathcal{C}(K, \boldsymbol{\tau}, \Delta)$, $\{\gamma_\ell \}_{\ell=0}^2$ and $\{\lambda_\ell \}_{\ell=0}^1$ from Lemma \ref{lemma:sample-mean-plus-standard}, and plug them into Theorem \ref{thm:unified-thm-short}. Then by Theorem \ref{thm:unified-thm-short}, to get $\frac{1}{\Sigma_\tau} \sum_{k=0}^{K-1} \sum_{i=0}^{\tau_k -1} \| \mathcal{G}_M(x_i^k) \|_2 ^2 \le \epsilon$ with probability at least $1-\Delta$, we need
\begin{equation}
\label{eq: proof-cor-sample-mean-plus-standard-varying-tau,eq-1}
\overline{\delta} \le \Delta/(2 \Sigma_\tau ) \quad\text{and}\quad \overline{\epsilon} \le \epsilon/(5\cdot 30M)
\end{equation}
and the following inequalities:
\begin{align}
\label{eq: proof-cor-sample-mean-plus-standard-varying-tau,eq-2}
\begin{cases}
& A,a \ge \frac{4}{9} \log \left( \frac{4(m+ 1) \Sigma_\tau}{\Delta} \right), \text{and } B,b \ge \frac{4}{9} \log \left( \frac{4(m+ n) \Sigma_\tau}{\Delta} \right) \\
& \Sigma_\tau \ge 5\cdot 30M (\Phi(x_0^0) - \Phi^*) / \epsilon \\
& \frac{2 \sigma_g}{\sqrt{A}} \sqrt{\log \left( \frac{4(m+ 1) \Sigma_\tau}{\Delta} \right)} \le \epsilon/(5\cdot 125 l_f M) \\
& \frac{4 \sigma_{g'}^2 }{B} \log \left( \frac{4(m+ n) \Sigma_\tau}{\Delta} \right) \le L_g \epsilon/(5\cdot 95 l_f M) \\
& \color{blue} \tau_{\text{max}}^2 \frac{16 \widehat{l}_g^2}{a} \log \left( \frac{4(m+ 1) \Sigma_\tau}{\Delta} \right) \le L_g \epsilon/(5\cdot 135 l_f M) \\
& \color{blue} \tau_{\text{max}}^2 \frac{16 \widehat{L}_g^2}{b} \log \left( \frac{4(m+ n) \Sigma_\tau}{\Delta} \right) \le 6 L_g^2 /19
\end{cases}
\end{align}
By \eqref{eq:randomized-tau} and Assumption \ref{assumption:varying-tau}, the constructed $\boldsymbol{\tau}$ satisfies $\tau_{\text{max}} \le \tau_+$ and $S_{\tau} \le \Sigma_{\tau} \le S_{\tau}+ \tau_+$. So the choices $\overline{\delta} = \frac{\Delta }{2( S_{\tau}+ \tau_+) }$ and $\overline{\epsilon} = \epsilon/(5\cdot 30M)$ imply (\ref{eq: proof-cor-sample-mean-plus-standard-varying-tau,eq-1}), and the following inequalities suffice to imply (\ref{eq: proof-cor-sample-mean-plus-standard-varying-tau,eq-2}):
\begin{align}
\label{eq: proof-cor-sample-mean-plus-standard-varying-tau,eq-3}
\begin{cases}
& S_\tau \ge C_{\Sigma} \cdot \epsilon^{-1} \\
& A \ge C_A \cdot \epsilon^{-2} \cdot \log \left( \frac{4(m+ 1) (S_{\tau}+ \tau_+) }{\Delta} \right) \\
& B \ge C_B \cdot \epsilon^{-1} \cdot \log \left( \frac{4(m+ n) (S_{\tau}+ \tau_+) }{\Delta} \right) \\
& \color{blue} a \ge C_a \cdot \tau_+^2 \cdot \epsilon^{-1} \cdot \log \left( \frac{4(m+ 1) (S_{\tau}+ \tau_+) }{\Delta} \right) \\
& \color{blue} b \ge C_b \cdot \tau_+^2 \cdot \log \left( \frac{4(m+ n) (S_{\tau}+ \tau_+) }{\Delta} \right)
\end{cases}
\end{align}
providing that $1/\epsilon$ is sufficiently large. Here $C_{\Sigma}, C_A, C_B, C_a, C_b$ are some constants.

Let $\tau_+ = \lceil \epsilon^{-1/3} \rceil$, $S_{\tau} = \lceil C_{\Sigma} \cdot \epsilon^{-1} \rceil$, $A = \lceil C_A \cdot \epsilon^{-2} \cdot \log ( \frac{5(m+ 1) S_\tau }{\Delta} ) \rceil$, $B = \lceil C_B \cdot \epsilon^{-1} \cdot \log ( \frac{5(m+ n) S_\tau }{\Delta} ) \rceil$, {\color{blue} $a = \lceil C_a \cdot \tau_+^2 \cdot \epsilon^{-1} \cdot \log ( \frac{5(m+ 1) S_\tau }{\Delta} ) \rceil$, $b = \lceil C_b \cdot \tau_+^2 \cdot \log ( \frac{5(m+ n) S_\tau }{\Delta} ) \rceil$}, then (\ref{eq: proof-cor-sample-mean-plus-standard-varying-tau,eq-3}) holds for sufficiently small $\epsilon$. 
So part (i) of Corollary \ref{cor:sample-mean-plus-standard-varying-tau} holds with probability at least $1-\Delta$ by choosing these parameters.

In the rest of the proof, we analyze part (ii) of Corollary \ref{cor:sample-mean-plus-standard-varying-tau}. We need to provide a high probability upper bound on $K$. For any positive integer $M$, \eqref{eq:randomized-tau} implies that $K>M$ if and only if $\sum_{k=0}^{M-1} \tau_k < S_{\tau}$, so $\mathbb{P}(K>M) = \mathbb{P} \left( \sum_{k=0}^{M-1} \tau_k < S_{\tau} \right)$. Note that the random variables $\{\tau_k\}$ are independent and bounded between $[0, \tau_+]$, so we can use Hoeffding's Inequality. Denote $\mu_{\tau} := \mathbb{E}_{\tau \sim D_{\tau}(\cdot; \tau_+, \theta_{\tau} )} [\tau] $. By Lemma \ref{lemma:Hoeffding-corollary}, for any $t\ge 0$,
\begin{equation}
\label{eq: proof-cor-sample-mean-plus-standard-varying-tau,eq-4}
\mathbb{P} \left( \sum_{k=0}^{M-1} \tau_k \le M \mu_{\tau} -t \right) \le \exp\left( -\frac{2 t^2}{M \tau_+^2} \right).
\end{equation}
Let $M= \lceil \frac{2C_{\tau} S_{\tau}}{ \tau_+} \rceil$. By Assumption \ref{assumption:varying-tau}, $\mu_{\tau} >0$ and $C_{\tau} \mu_{\tau} \ge \tau_+ > 0$, so $M\ge \frac{2C_{\tau} S_{\tau}}{ \tau_+} \ge \frac{2 S_{\tau}}{ \mu_{\tau}} > \frac{S_{\tau}}{ \mu_{\tau}}$. Let $t = M \mu_\tau - S_\tau \ge 0$ in (\ref{eq: proof-cor-sample-mean-plus-standard-varying-tau,eq-4}), then we get
\begin{equation}
\label{eq: proof-cor-sample-mean-plus-standard-varying-tau,eq-5}
\mathbb{P}(K>M) = \mathbb{P} \left( \sum_{k=0}^{M-1} \tau_k < S_{\tau} \right) \le \mathbb{P} \left( \sum_{k=0}^{M-1} \tau_k \le S_{\tau} \right) \le \exp\left( -\frac{2 (M \mu_{\tau} - S_{\tau} )^2}{M \tau_+^2} \right).
\end{equation}
Note that the mapping $\phi(x) = \frac{1}{x}(x \mu_{\tau} - S_{\tau} )^2$ is increasing on the interval $[\frac{S_{\tau}}{ \mu_{\tau}}, +\infty)$, so $\phi(M) \ge \phi( \frac{2C_{\tau} S_{\tau}}{ \tau_+} )$, which further implies
\begin{equation}
\label{eq: proof-cor-sample-mean-plus-standard-varying-tau,eq-6}
\begin{split}
\exp\left( -\frac{2 (M \mu_{\tau} - S_{\tau} )^2}{M \tau_+^2} \right) &\le \exp\left( -\frac{2 }{(\frac{2C_{\tau} S_{\tau}}{ \tau_+}) \tau_+^2} \left( \frac{2C_{\tau} S_{\tau}}{ \tau_+} \mu_{\tau} - S_{\tau} \right)^2 \right) \\
&= \exp\left( -\frac{S_{\tau} }{C_{\tau} \tau_+} \left( \frac{2C_{\tau} \mu_{\tau} }{ \tau_+} - 1 \right)^2 \right) \le \exp\left( -\frac{S_{\tau} }{C_{\tau} \tau_+} \right)
\end{split}
\end{equation}
where the last step is because $\frac{2C_{\tau} \mu_{\tau} }{ \tau_+} - 1 \ge 1$. By (\ref{eq: proof-cor-sample-mean-plus-standard-varying-tau,eq-5}) and (\ref{eq: proof-cor-sample-mean-plus-standard-varying-tau,eq-6}), $\mathbb{P}(K > \lceil \frac{2C_{\tau} S_{\tau}}{ \tau_+} \rceil) \le \exp\left( -\frac{S_{\tau} }{C_{\tau} \tau_+} \right)$. 
So $K\le \lceil \frac{2C_\tau S_\tau}{ \tau_+} \rceil = \Theta(\epsilon^{-2/3})$ with probability at least $1-\exp(-\frac{S_\tau}{C_\tau \tau_+}) \ge 1 - \exp(- C_p \epsilon^{-2/3})$ for some constant $C_p$.

In (\ref{eq:sample-mean-plus-standard}), at the $(k,0)$-th iteration, we evaluate $g_{\xi}(\cdot)$ for $A$ times and $g'_{\xi}(\cdot)$ for $B$ times. At the $(k,i)$-th iteration (with $i>0$), we evaluate $g_{\xi}(\cdot)$ for $2a$ times and $g'_{\xi}(\cdot)$ for $2b$ times.
On the high probability set where $K= O(\epsilon^{-2/3})$, the oracle complexity for evaluations of $g_{\xi}(\cdot)$ is 
$$KA + 2(\Sigma_{\tau} - K)a \le KA + 2\Sigma_{\tau} \cdot a \le KA + 2(S_{\tau}+ \tau_+) a = \widetilde{O}(\epsilon^{-8/3} \log(1/\Delta)) ,$$
and the oracle complexity for evaluations of Jacobians $g'_{\xi}(\cdot)$ is 
$$KB + 2(\Sigma_{\tau} - K)b \le KB + 2\Sigma_{\tau} \cdot b \le KB + 2(S_{\tau}+ \tau_+) b = \widetilde{O}(\epsilon^{-5/3} \log(1/\Delta)) .$$
Using a union probability bound for part (i) and part (ii), they hold simultaneously with probability at least $1 - \Delta - \exp(- C_p \epsilon^{-2/3})$, which finishes the proof.
\end{proof}

\end{document}